\documentclass[reqno]{amsart}
%%%%%%%%%%%%%%%%%%%%%%%%%%%%%%%%%%%%%%%%%%%%%%%%%%%%%%%%%%%%%%%%%%%%%%%%%%%%%%%%%%%%%%%%%%%%%%%%%%%%%%%%%%%%%%%%%%%%%%%%%%%%%%%%%%%%%%%%%%%%%%%%%%%%%%%%%%%%%%%%%%%%%%%%%%%%%%%%%%%%%%%%%%%%%%%%%%%%%%%%%%%%%%%%%%%%%%%%%%%%%%%%%%%%%%%%%%%%%%%%%%%%%%%%%%%%
\usepackage{amsfonts}

\setcounter{MaxMatrixCols}{10}
%TCIDATA{OutputFilter=LATEX.DLL}
%TCIDATA{Version=5.50.0.2953}
%TCIDATA{<META NAME="SaveForMode" CONTENT="1">}
%TCIDATA{BibliographyScheme=Manual}
%TCIDATA{Created=Thursday, February 13, 2014 18:44:20}
%TCIDATA{LastRevised=Thursday, February 25, 2021 12:23:44}
%TCIDATA{<META NAME="GraphicsSave" CONTENT="32">}
%TCIDATA{<META NAME="DocumentShell" CONTENT="Articles\SW\AMS Journal Article">}
%TCIDATA{Language=American English}
%TCIDATA{CSTFile=amsartci.cst}

\newtheorem{theorem}{Theorem}
\theoremstyle{plain}

\newtheorem{axiom}{Axiom}

\newtheorem{conjecture}{Conjecture}
\newtheorem{corollary}{Corollary}

\newtheorem{definition}{Definition}
\newtheorem{example}{Example}
\newtheorem{exercise}{Exercise}
\newtheorem{lemma}{Lemma}

\newtheorem{proposition}{Proposition}
\newtheorem{remark}{Remark}

\numberwithin{equation}{section}
\numberwithin{theorem}{section}
\numberwithin{algorithm}{section}
\numberwithin{axiom}{section}
\numberwithin{case}{section}
\numberwithin{claim}{section}
\numberwithin{conclusion}{section}
\numberwithin{condition}{section}
\numberwithin{conjecture}{section}
\numberwithin{corollary}{section}
\numberwithin{criterion}{section}
\numberwithin{definition}{section}
\numberwithin{example}{section}
\numberwithin{exercise}{section}
\numberwithin{lemma}{section}
\numberwithin{notation}{section}
\numberwithin{problem}{section}
\numberwithin{proposition}{section}
\numberwithin{remark}{section}
\numberwithin{solution}{section}

% Macros for Scientific Word 3.0 documents saved with the LaTeX filter.
%Copyright (C) 1994-97 TCI Software Research, Inc.
\typeout{TCILATEX Macros for Scientific Word 3.0 <19 May 1997>.}
\typeout{NOTICE:  This macro file is NOT proprietary and may be 
freely copied and distributed.}
\makeatletter
%
%%%%%%%%%%%%%%%%%%%%%%
% macros for time
\newcount\@hour\newcount\@minute\chardef\@x10\chardef\@xv60
\def\tcitime{
\def\@time{%
  \@minute\time\@hour\@minute\divide\@hour\@xv
  \ifnum\@hour<\@x 0\fi\the\@hour:%
  \multiply\@hour\@xv\advance\@minute-\@hour
  \ifnum\@minute<\@x 0\fi\the\@minute
  }}%

%%%%%%%%%%%%%%%%%%%%%%
% macro for hyperref
\@ifundefined{hyperref}{}{}

% macro for external program call
\@ifundefined{qExtProgCall}{\def\qExtProgCall#1#2#3#4#5#6{\relax}}{}
%%%%%%%%%%%%%%%%%%%%%%
%
% macros for graphics
%
%
%
\def\QCTOpt[#1]#2{%
  \def\QCTOptB{#1}
  \def\QCTOptA{#2}
}
\def\QCTNOpt#1{%
  \def\QCTOptA{#1}
  \let\QCTOptB\empty
}
\def\Qct{%
  \@ifnextchar[{%
    \QCTOpt}{\QCTNOpt}
}
\def\QCBOpt[#1]#2{%
  \def\QCBOptB{#1}
  \def\QCBOptA{#2}
}
\def\QCBNOpt#1{%
  \def\QCBOptA{#1}
  \let\QCBOptB\empty
}
\def\Qcb{%
  \@ifnextchar[{%
    \QCBOpt}{\QCBNOpt}
}
\def\PrepCapArgs{%
  \ifx\QCBOptA\empty
    \ifx\QCTOptA\empty
      {}%
    \else
      \ifx\QCTOptB\empty
        {\QCTOptA}%
      \else
        [\QCTOptB]{\QCTOptA}%
      \fi
    \fi
  \else
    \ifx\QCBOptA\empty
      {}%
    \else
      \ifx\QCBOptB\empty
        {\QCBOptA}%
      \else
        [\QCBOptB]{\QCBOptA}%
      \fi
    \fi
  \fi
}
\newcount\GRAPHICSTYPE
%\GRAPHICSTYPE 0 is for TurboTeX
%\GRAPHICSTYPE 1 is for DVIWindo (PostScript)
%%%(removed)%\GRAPHICSTYPE 2 is for psfig (PostScript)
\GRAPHICSTYPE=\z@
\def\GRAPHICSPS#1{%
 \ifcase\GRAPHICSTYPE%\GRAPHICSTYPE=0
   \special{ps: #1}%
 \or%\GRAPHICSTYPE=1
   \special{language "PS", include "#1"}%
%%%\or%\GRAPHICSTYPE=2
%%%  #1%
 \fi
}%
%
%
%
% \graffile{ body }                                  %#1
%          { contentswidth (scalar)  }               %#2
%          { contentsheight (scalar) }               %#3
%          { vertical shift when in-line (scalar) }  %#4
\def\graffile#1#2#3#4{%
%%% \ifnum\GRAPHICSTYPE=\tw@
%%%  %Following if using psfig
%%%  \@ifundefined{psfig}{\input psfig.tex}{}%
%%%  \psfig{file=#1, height=#3, width=#2}%
%%% \else
  %Following for all others
  % JCS - added BOXTHEFRAME, see below
    \bgroup
    \leavevmode
    \@ifundefined{bbl@deactivate}{\def~{\string~}}{\activesoff}
    \raise -#4 \BOXTHEFRAME{%
        \hbox to #2{\raise #3\hbox to #2{\null #1\hfil}}}%
    \egroup
}%
%
% A box for drafts
\def\draftbox#1#2#3#4{%
 \leavevmode\raise -#4 \hbox{%
  \frame{\rlap{\protect\tiny #1}\hbox to #2%
   {\vrule height#3 width\z@ depth\z@\hfil}%
  }%
 }%
}%
\newcount\draft
\draft=\z@

\newif\ifwasdraft
\wasdraftfalse

%  \GRAPHIC{ body }                                  %#1
%          { draft name }                            %#2
%          { contentswidth (scalar)  }               %#3
%          { contentsheight (scalar) }               %#4
%          { vertical shift when in-line (scalar) }  %#5
\def\GRAPHIC#1#2#3#4#5{%
 \ifnum\draft=\@ne\draftbox{#2}{#3}{#4}{#5}%
  \else\graffile{#1}{#3}{#4}{#5}%
  \fi
 }%
\def\addtoLaTeXparams#1{%
    \edef\LaTeXparams{\LaTeXparams #1}}%
%
% JCS -  added a switch BoxFrame that can 
% be set by including X in the frame params.
% If set a box is drawn around the frame.

\newif\ifBoxFrame \BoxFramefalse
\newif\ifOverFrame \OverFramefalse
\newif\ifUnderFrame \UnderFramefalse

\def\BOXTHEFRAME#1{%
   \hbox{%
      \ifBoxFrame
         \frame{#1}%
      \else
         {#1}%
      \fi
   }%
}

\def\doFRAMEparams#1{\BoxFramefalse\OverFramefalse\UnderFramefalse\readFRAMEparams#1\end}%
\def\readFRAMEparams#1{%
 \ifx#1\end%
  \let\next=\relax
  \else
  \ifx#1i\dispkind=\z@\fi
  \ifx#1d\dispkind=\@ne\fi
  \ifx#1f\dispkind=\tw@\fi
  \ifx#1t\addtoLaTeXparams{t}\fi
  \ifx#1b\addtoLaTeXparams{b}\fi
  \ifx#1p\addtoLaTeXparams{p}\fi
  \ifx#1h\addtoLaTeXparams{h}\fi
  \ifx#1X\BoxFrametrue\fi
  \ifx#1O\OverFrametrue\fi
  \ifx#1U\UnderFrametrue\fi
  \ifx#1w
    \ifnum\draft=1\wasdrafttrue\else\wasdraftfalse\fi
    \draft=\@ne
  \fi
  \let\next=\readFRAMEparams
  \fi
 \next
 }%
%
%Macro for In-line graphics object
%   \IFRAME{ contentswidth (scalar)  }               %#1
%          { contentsheight (scalar) }               %#2
%          { vertical shift when in-line (scalar) }  %#3
%          { draft name }                            %#4
%          { body }                                  %#5
%          { caption}                                %#6

\def\IFRAME#1#2#3#4#5#6{%
      \bgroup
      \let\QCTOptA\empty
      \let\QCTOptB\empty
      \let\QCBOptA\empty
      \let\QCBOptB\empty
      #6%
      \parindent=0pt%
      \leftskip=0pt
      \rightskip=0pt
      \setbox0 = \hbox{\QCBOptA}%
      \@tempdima = #1\relax
      \ifOverFrame
          % Do this later
          \typeout{This is not implemented yet}%
          \show\HELP
      \else
         \ifdim\wd0>\@tempdima
            \advance\@tempdima by \@tempdima
            \ifdim\wd0 >\@tempdima
               \textwidth=\@tempdima
               \setbox1 =\vbox{%
                  \noindent\hbox to \@tempdima{\hfill\GRAPHIC{#5}{#4}{#1}{#2}{#3}\hfill}\\%
                  \noindent\hbox to \@tempdima{\parbox[b]{\@tempdima}{\QCBOptA}}%
               }%
               \wd1=\@tempdima
            \else
               \textwidth=\wd0
               \setbox1 =\vbox{%
                 \noindent\hbox to \wd0{\hfill\GRAPHIC{#5}{#4}{#1}{#2}{#3}\hfill}\\%
                 \noindent\hbox{\QCBOptA}%
               }%
               \wd1=\wd0
            \fi
         \else
            %\show\BBB
            \ifdim\wd0>0pt
              \hsize=\@tempdima
              \setbox1 =\vbox{%
                \unskip\GRAPHIC{#5}{#4}{#1}{#2}{0pt}%
                \break
                \unskip\hbox to \@tempdima{\hfill \QCBOptA\hfill}%
              }%
              \wd1=\@tempdima
           \else
              \hsize=\@tempdima
              \setbox1 =\vbox{%
                \unskip\GRAPHIC{#5}{#4}{#1}{#2}{0pt}%
              }%
              \wd1=\@tempdima
           \fi
         \fi
         \@tempdimb=\ht1
         \advance\@tempdimb by \dp1
         \advance\@tempdimb by -#2%
         \advance\@tempdimb by #3%
         \leavevmode
         \raise -\@tempdimb \hbox{\box1}%
      \fi
      \egroup%
}%
%
%Macro for Display graphics object
%   \DFRAME{ contentswidth (scalar)  }               %#1
%          { contentsheight (scalar) }               %#2
%          { draft label }                           %#3
%          { name }                                  %#4
%          { caption}                                %#5
\def\DFRAME#1#2#3#4#5{%
 \begin{center}
     \let\QCTOptA\empty
     \let\QCTOptB\empty
     \let\QCBOptA\empty
     \let\QCBOptB\empty
     \ifOverFrame 
        #5\QCTOptA\par
     \fi
     \GRAPHIC{#4}{#3}{#1}{#2}{\z@}
     \ifUnderFrame 
        \nobreak\par\nobreak#5\QCBOptA
     \fi
 \end{center}%
 }%
%
%Macro for Floating graphic object
%   \FFRAME{ framedata f|i tbph x F|T }              %#1
%          { contentswidth (scalar)  }               %#2
%          { contentsheight (scalar) }               %#3
%          { caption }                               %#4
%          { label }                                 %#5
%          { draft name }                            %#6
%          { body }                                  %#7
\def\FFRAME#1#2#3#4#5#6#7{%
 \begin{figure}[#1]%
  \let\QCTOptA\empty
  \let\QCTOptB\empty
  \let\QCBOptA\empty
  \let\QCBOptB\empty
  \ifOverFrame
    #4
    \ifx\QCTOptA\empty
    \else
      \ifx\QCTOptB\empty
        \caption{\QCTOptA}%
      \else
        \caption[\QCTOptB]{\QCTOptA}%
      \fi
    \fi
    \ifUnderFrame\else
      \label{#5}%
    \fi
  \else
    \UnderFrametrue%
  \fi
  \begin{center}\GRAPHIC{#7}{#6}{#2}{#3}{\z@}\end{center}%
  \ifUnderFrame
    #4
    \ifx\QCBOptA\empty
      \caption{}%
    \else
      \ifx\QCBOptB\empty
        \caption{\QCBOptA}%
      \else
        \caption[\QCBOptB]{\QCBOptA}%
      \fi
    \fi
    \label{#5}%
  \fi
  \end{figure}%
 }%
%
%
%    \FRAME{ framedata f|i tbph x F|T }              %#1
%          { contentswidth (scalar)  }               %#2
%          { contentsheight (scalar) }               %#3
%          { vertical shift when in-line (scalar) }  %#4
%          { caption }                               %#5
%          { label }                                 %#6
%          { name }                                  %#7
%          { body }                                  %#8
%
%    framedata is a string which can contain the following
%    characters: idftbphxFT
%    Their meaning is as follows:
%             i, d or f : in-line, display, or floating
%             t,b,p,h   : LaTeX floating placement options
%             x         : fit contents box to contents
%             F or T    : Figure or Table. 
%                         Later this can expand
%                         to a more general float class.
%
%
\newcount\dispkind%

\def\makeactives{
  \catcode`\"=\active
  \catcode`\;=\active
  \catcode`\:=\active
  \catcode`\'=\active
  \catcode`\~=\active
}
\bgroup
   \makeactives
   \gdef\activesoff{%
      \def"{\string"}
      \def;{\string;}
      \def:{\string:}
      \def'{\string'}
      \def~{\string~}
      %\bbl@deactivate{"}%
      %\bbl@deactivate{;}%
      %\bbl@deactivate{:}%
      %\bbl@deactivate{'}%
    }
\egroup

\def\FRAME#1#2#3#4#5#6#7#8{%
 \bgroup
 \ifnum\draft=\@ne
   \wasdrafttrue
 \else
   \wasdraftfalse%
 \fi
 \def\LaTeXparams{}%
 \dispkind=\z@
 \def\LaTeXparams{}%
 \doFRAMEparams{#1}%
 \ifnum\dispkind=\z@\IFRAME{#2}{#3}{#4}{#7}{#8}{#5}\else
  \ifnum\dispkind=\@ne\DFRAME{#2}{#3}{#7}{#8}{#5}\else
   \ifnum\dispkind=\tw@
    \edef\@tempa{\noexpand\FFRAME{\LaTeXparams}}%
    \@tempa{#2}{#3}{#5}{#6}{#7}{#8}%
    \fi
   \fi
  \fi
  \ifwasdraft\draft=1\else\draft=0\fi{}%
  \egroup
 }%
%
% This macro added to let SW gobble a parameter that
% should not be passed on and expanded. 

\def\TEXUX#1{"texux"}

%
% Macros for text attributes:
%
%
%
%
%%%%%%%%%%%%%%%%%%%%%%%%%%%%%%%%%%%%%%%%%%%%%%%%%%%%%%%%%%%%%%%%%%%%%%%%
%
%
% macros for user - defined functions
\def\limfunc#1{\mathop{\rm #1}}%
\def\func#1{\mathop{\rm #1}\nolimits}%
% macro for unit names
%

%
% miscellaneous 
\long\def\QQQ#1#2{%
     \long\expandafter\def\csname#1\endcsname{#2}}%
\@ifundefined{QTP}{\def\QTP#1{}}{}
\@ifundefined{QEXCLUDE}{\def\QEXCLUDE#1{}}{}
\@ifundefined{Qlb}{}{}
\@ifundefined{Qlt}{}{}
\long\def\QQA#1#2{}%
\def\QTR#1#2{{\csname#1\endcsname #2}}%(gp) Is this the best?
\def\EXPAND#1[#2]#3{}%
\def\NOEXPAND#1[#2]#3{}%
\def\LaTeXparent#1{}%
\def\ChildStyles#1{}%
\def\ChildDefaults#1{}%
\def\QTagDef#1#2#3{}%

% Constructs added with Scientific Notebook
\@ifundefined{correctchoice}{}{}
\@ifundefined{HTML}{\def\HTML#1{\relax}}{}
\@ifundefined{TCIIcon}{\def\TCIIcon#1#2#3#4{\relax}}{}
\if@compatibility
  \typeout{Not defining UNICODE or CustomNote commands for LaTeX 2.09.}
\else
  \providecommand{\UNICODE}[2][]{}
  
\fi

%
% Macros for style editor docs
\@ifundefined{StyleEditBeginDoc}{}{}
%
% Macros for footnotes
\def\QQfnmark#1{\footnotemark}

%
% Macros for indexing.
%
\@ifundefined{TCIMAKEINDEX}{}{\makeindex}%
%
% Attempts to avoid problems with other styles
\@ifundefined{abstract}{%
 \def\abstract{%
  \if@twocolumn
   \section*{Abstract (Not appropriate in this style!)}%
   \else \small 
   \begin{center}{\bf Abstract\vspace{-.5em}\vspace{\z@}}\end{center}%
   \quotation 
   \fi
  }%
 }{%
 }%
\@ifundefined{endabstract}{\def\endabstract
  {\if@twocolumn\else\endquotation\fi}}{}%
\@ifundefined{maketitle}{\def\maketitle#1{}}{}%
\@ifundefined{affiliation}{\def\affiliation#1{}}{}%
\@ifundefined{proof}{}{}%
\@ifundefined{endproof}{}{}%
\@ifundefined{newfield}{\def\newfield#1#2{}}{}%
\@ifundefined{chapter}{\def\chapter#1{\par(Chapter head:)#1\par }%
 \newcount\c@chapter}{}%
\@ifundefined{part}{\def\part#1{\par(Part head:)#1\par }}{}%
\@ifundefined{section}{\def\section#1{\par(Section head:)#1\par }}{}%
\@ifundefined{subsection}{\def\subsection#1%
 {\par(Subsection head:)#1\par }}{}%
\@ifundefined{subsubsection}{\def\subsubsection#1%
 {\par(Subsubsection head:)#1\par }}{}%
\@ifundefined{paragraph}{\def\paragraph#1%
 {\par(Subsubsubsection head:)#1\par }}{}%
\@ifundefined{subparagraph}{\def\subparagraph#1%
 {\par(Subsubsubsubsection head:)#1\par }}{}%
%%%%%%%%%%%%%%%%%%%%%%%%%%%%%%%%%%%%%%%%%%%%%%%%%%%%%%%%%%%%%%%%%%%%%%%%
% These symbols are not recognized by LaTeX
\@ifundefined{therefore}{}{}%
\@ifundefined{backepsilon}{}{}%
\@ifundefined{yen}{}{}%
\@ifundefined{registered}{%
   \def\registered{\relax\ifmmode{}\r@gistered
                    \else$\m@th\r@gistered$\fi}%
 \def\r@gistered{^{\ooalign
  {\hfil\raise.07ex\hbox{$\scriptstyle\rm\text{R}$}\hfil\crcr
  \mathhexbox20D}}}}{}%
\@ifundefined{Eth}{}{}%
\@ifundefined{eth}{}{}%
\@ifundefined{Thorn}{}{}%
\@ifundefined{thorn}{}{}%
% A macro to allow any symbol that requires math to appear in text
%
\@ifundefined{degree}{}{}%
%
% macros for T3TeX files
\newdimen\theight
\def\Column{%
 \vadjust{\setbox\z@=\hbox{\scriptsize\quad\quad tcol}%
  \theight=\ht\z@\advance\theight by \dp\z@\advance\theight by \lineskip
  \kern -\theight \vbox to \theight{%
   \rightline{\rlap{\box\z@}}%
   \vss
   }%
  }%
 }%
\def\qed{%
 \ifhmode\unskip\nobreak\fi\ifmmode\ifinner\else\hskip5\p@\fi\fi
 \hbox{\hskip5\p@\vrule width4\p@ height6\p@ depth1.5\p@\hskip\p@}%
 }%
\def\miss{\hbox{\vrule height2\p@ width 2\p@ depth\z@}}%
%
%           %always translated to \left| or \right|
%
\def\tcol#1{{\baselineskip=6\p@ \vcenter{#1}} \Column}  %
%
%                 %dummy entry in column 
%             %column entry
%               %column entry (not math)
%
\@ifundefined{note}{}{}%

\def\newfmtname{LaTeX2e}
% No longer load latexsym.  This is now handled by SWP, which uses amsfonts if necessary

\ifx\fmtname\newfmtname
  \DeclareOldFontCommand{\rm}{\normalfont\rmfamily}{\mathrm}
  \DeclareOldFontCommand{\sf}{\normalfont\sffamily}{\mathsf}
  \DeclareOldFontCommand{\tt}{\normalfont\ttfamily}{\mathtt}
  \DeclareOldFontCommand{\bf}{\normalfont\bfseries}{\mathbf}
  \DeclareOldFontCommand{\it}{\normalfont\itshape}{\mathit}
  \DeclareOldFontCommand{\sl}{\normalfont\slshape}{\@nomath\sl}
  \DeclareOldFontCommand{\sc}{\normalfont\scshape}{\@nomath\sc}
\fi

%
% Greek bold macros
% Redefine all of the math symbols 
% which might be bolded	 - there are 
% probably others to add to this list

\def\alpha{{\Greekmath 010B}}%
\def\beta{{\Greekmath 010C}}%
\def\gamma{{\Greekmath 010D}}%
\def\delta{{\Greekmath 010E}}%
\def\epsilon{{\Greekmath 010F}}%
\def\zeta{{\Greekmath 0110}}%
\def\eta{{\Greekmath 0111}}%
\def\theta{{\Greekmath 0112}}%
\def\iota{{\Greekmath 0113}}%
\def\kappa{{\Greekmath 0114}}%
\def\lambda{{\Greekmath 0115}}%
\def\mu{{\Greekmath 0116}}%
\def\nu{{\Greekmath 0117}}%
\def\xi{{\Greekmath 0118}}%
\def\pi{{\Greekmath 0119}}%
\def\rho{{\Greekmath 011A}}%
\def\sigma{{\Greekmath 011B}}%
\def\tau{{\Greekmath 011C}}%
\def\upsilon{{\Greekmath 011D}}%
\def\phi{{\Greekmath 011E}}%
\def\chi{{\Greekmath 011F}}%
\def\psi{{\Greekmath 0120}}%
\def\omega{{\Greekmath 0121}}%
\def\varepsilon{{\Greekmath 0122}}%
\def\vartheta{{\Greekmath 0123}}%
\def\varpi{{\Greekmath 0124}}%
\def\varrho{{\Greekmath 0125}}%
\def\varsigma{{\Greekmath 0126}}%
\def\varphi{{\Greekmath 0127}}%

\def\nabla{{\Greekmath 0272}}
\def\FindBoldGroup{%
   {\setbox0=\hbox{$\mathbf{x\global\edef\theboldgroup{\the\mathgroup}}$}}%
}

\def\Greekmath#1#2#3#4{%
    \if@compatibility
        \ifnum\mathgroup=\symbold
           \mathchoice{\mbox{\boldmath$\displaystyle\mathchar"#1#2#3#4$}}%
                      {\mbox{\boldmath$\textstyle\mathchar"#1#2#3#4$}}%
                      {\mbox{\boldmath$\scriptstyle\mathchar"#1#2#3#4$}}%
                      {\mbox{\boldmath$\scriptscriptstyle\mathchar"#1#2#3#4$}}%
        \else
           \mathchar"#1#2#3#4% 
        \fi 
    \else 
        \FindBoldGroup
        \ifnum\mathgroup=\theboldgroup % For 2e
           \mathchoice{\mbox{\boldmath$\displaystyle\mathchar"#1#2#3#4$}}%
                      {\mbox{\boldmath$\textstyle\mathchar"#1#2#3#4$}}%
                      {\mbox{\boldmath$\scriptstyle\mathchar"#1#2#3#4$}}%
                      {\mbox{\boldmath$\scriptscriptstyle\mathchar"#1#2#3#4$}}%
        \else
           \mathchar"#1#2#3#4% 
        \fi     	    
	  \fi}

\newif\ifGreekBold  \GreekBoldfalse
\let\SAVEPBF=\pbf
\def\pbf{\GreekBoldtrue\SAVEPBF}%

\@ifundefined{theorem}{\newtheorem{theorem}{Theorem}}{}
\@ifundefined{lemma}{\newtheorem{lemma}[theorem]{Lemma}}{}
\@ifundefined{corollary}{\newtheorem{corollary}[theorem]{Corollary}}{}
\@ifundefined{conjecture}{}{}
\@ifundefined{proposition}{\newtheorem{proposition}[theorem]{Proposition}}{}
\@ifundefined{axiom}{}{}
\@ifundefined{remark}{}{}
\@ifundefined{example}{}{}
\@ifundefined{exercise}{}{}
\@ifundefined{definition}{}{}

\@ifundefined{mathletters}{%
  \newcounter{equationnumber}  
  \def\mathletters{%
     \addtocounter{equation}{1}
     \edef\@currentlabel{\theequation}%
     \setcounter{equationnumber}{\c@equation}
     \setcounter{equation}{0}%
     \edef\theequation{\@currentlabel\noexpand\alph{equation}}%
  }
  
}{}

%Logos
\@ifundefined{BibTeX}{%
    \def\BibTeX{{\rm B\kern-.05em{\sc i\kern-.025em b}\kern-.08em
                 T\kern-.1667em\lower.7ex\hbox{E}\kern-.125emX}}}{}%
\@ifundefined{AmS}%
    {\def\AmS{{\protect\usefont{OMS}{cmsy}{m}{n}%
                A\kern-.1667em\lower.5ex\hbox{M}\kern-.125emS}}}{}%
\@ifundefined{AmSTeX}{}{}%
%

% This macro is a fix to eqnarray
\def\@@eqncr{\let\@tempa\relax
    \ifcase\@eqcnt \def\@tempa{& & &}\or \def\@tempa{& &}%
      \else \def\@tempa{&}\fi
     \@tempa
     \if@eqnsw
        \iftag@
           \@taggnum
        \else
           \@eqnnum\stepcounter{equation}%
        \fi
     \fi
     \global\tag@false
     \global\@eqnswtrue
     \global\@eqcnt\z@\cr}

\def\TCItag{\@ifnextchar*{\@TCItagstar}{\@TCItag}}
\def\@TCItag#1{%
    \global\tag@true
    \global\def\@taggnum{(#1)}}
\def\@TCItagstar*#1{%
    \global\tag@true
    \global\def\@taggnum{#1}}
%
%%%%%%%%%%%%%%%%%%%%%%%%%%%%%%%%%%%%%%%%%%%%%%%%%%%%%%%%%%%%%%%%%%%%%
%
%
%
%
%
%
%
%
%
%
%
%
%
%
%
%
%
%
%
%
%
%
% Macros for text size operators:
%
%
%
%
%
%
%
%
%
%
%
%
%
%
%
%
%
%
%
%
%
%Macros for display size operators:
%
%
%
%
%
%
%
\def\dsum{\mathop{\displaystyle \sum }}%
\def\dbigcup{\mathop{\displaystyle \bigcup }}%
%
%
%

%%%%%%%%%%%%%%%%%%%%%%%%%%%%%%%%%%%%%%%%%%%%%%%%%%%%%%%%%%%%%%%%%%%%%%%
% NOTE: The rest of this file is read only if amstex has not been
% loaded.  This section is used to define amstex constructs in the
% event they have not been defined.
%
%
\ifx\ds@amstex\relax
   \message{amstex already loaded}\makeatother % 2.09 compatability
\else
   \@ifpackageloaded{amsmath}%
      {\message{amsmath already loaded}\makeatother }
      {}
   \@ifpackageloaded{amstex}%
      {\message{amstex already loaded}\makeatother }
      {}
   \@ifpackageloaded{amsgen}%
      {\message{amsgen already loaded}\makeatother }
      {}
\fi
%%%%%%%%%%%%%%%%%%%%%%%%%%%%%%%%%%%%%%%%%%%%%%%%%%%%%%%%%%%%%%%%%%%%%%%%
%%
%
%
%  Macros to define some AMS LaTeX constructs when 
%  AMS LaTeX has not been loaded
% 
% These macros are copied from the AMS-TeX package for doing
% multiple integrals.
%
\let\DOTSI\relax
\def\RIfM@{\relax\ifmmode}%
\def\FN@{\futurelet\next}%
\newcount\intno@
\def\iint{\DOTSI\intno@\tw@\FN@\ints@}%
\def\iiint{\DOTSI\intno@\thr@@\FN@\ints@}%
\def\iiiint{\DOTSI\intno@4 \FN@\ints@}%
\def\idotsint{\DOTSI\intno@\z@\FN@\ints@}%
\def\ints@{\findlimits@\ints@@}%
\newif\iflimtoken@
\newif\iflimits@
\def\findlimits@{\limtoken@true\ifx\next\limits\limits@true
 \else\ifx\next\nolimits\limits@false\else
 \limtoken@false\ifx\ilimits@\nolimits\limits@false\else
 \ifinner\limits@false\else\limits@true\fi\fi\fi\fi}%
\def\multint@{\int\ifnum\intno@=\z@\intdots@                          %1
 \else\intkern@\fi                                                    %2
 \ifnum\intno@>\tw@\int\intkern@\fi                                   %3
 \ifnum\intno@>\thr@@\int\intkern@\fi                                 %4
 \int}%                                                               %5
\def\multintlimits@{\intop\ifnum\intno@=\z@\intdots@\else\intkern@\fi
 \ifnum\intno@>\tw@\intop\intkern@\fi
 \ifnum\intno@>\thr@@\intop\intkern@\fi\intop}%
\def\intic@{%
    \mathchoice{\hskip.5em}{\hskip.4em}{\hskip.4em}{\hskip.4em}}%
\def\negintic@{\mathchoice
 {\hskip-.5em}{\hskip-.4em}{\hskip-.4em}{\hskip-.4em}}%
\def\ints@@{\iflimtoken@                                              %1
 \def\ints@@@{\iflimits@\negintic@
   \mathop{\intic@\multintlimits@}\limits                             %2
  \else\multint@\nolimits\fi                                          %3
  \eat@}%                                                             %4
 \else                                                                %5
 \def\ints@@@{\iflimits@\negintic@
  \mathop{\intic@\multintlimits@}\limits\else
  \multint@\nolimits\fi}\fi\ints@@@}%
\def\intkern@{\mathchoice{\!\!\!}{\!\!}{\!\!}{\!\!}}%
\def\plaincdots@{\mathinner{\cdotp\cdotp\cdotp}}%
\def\intdots@{\mathchoice{\plaincdots@}%
 {{\cdotp}\mkern1.5mu{\cdotp}\mkern1.5mu{\cdotp}}%
 {{\cdotp}\mkern1mu{\cdotp}\mkern1mu{\cdotp}}%
 {{\cdotp}\mkern1mu{\cdotp}\mkern1mu{\cdotp}}}%
%
%
%  These macros are for doing the AMS \text{} construct
%
\def\RIfM@{\relax\protect\ifmmode}
\def\text{\RIfM@\expandafter\text@\else\expandafter\mbox\fi}
\let\nfss@text\text
\def\text@#1{\mathchoice
   {\textdef@\displaystyle\f@size{#1}}%
   {\textdef@\textstyle\tf@size{\firstchoice@false #1}}%
   {\textdef@\textstyle\sf@size{\firstchoice@false #1}}%
   {\textdef@\textstyle \ssf@size{\firstchoice@false #1}}%
   \glb@settings}

\def\textdef@#1#2#3{\hbox{{%
                    \everymath{#1}%
                    \let\f@size#2\selectfont
                    #3}}}
\newif\iffirstchoice@
\firstchoice@true
%
%These are the AMS constructs for multiline limits.
%
\def\Let@{\relax\iffalse{\fi\let\\=\cr\iffalse}\fi}%
\def\vspace@{\def\vspace##1{\crcr\noalign{\vskip##1\relax}}}%
\def\multilimits@{\bgroup\vspace@\Let@
 \baselineskip\fontdimen10 \scriptfont\tw@
 \advance\baselineskip\fontdimen12 \scriptfont\tw@
 \lineskip\thr@@\fontdimen8 \scriptfont\thr@@
 \lineskiplimit\lineskip
 \vbox\bgroup\ialign\bgroup\hfil$\m@th\scriptstyle{##}$\hfil\crcr}%
\def\Sb{_\multilimits@}%
\def\endSb{\crcr\egroup\egroup\egroup}%
\def\Sp{^\multilimits@}%

%
%
%These are AMS constructs for horizontal arrows
%
\newdimen\ex@
\ex@.2326ex
\def\rightarrowfill@#1{$#1\m@th\mathord-\mkern-6mu\cleaders
 \hbox{$#1\mkern-2mu\mathord-\mkern-2mu$}\hfill
 \mkern-6mu\mathord\rightarrow$}%
\def\leftarrowfill@#1{$#1\m@th\mathord\leftarrow\mkern-6mu\cleaders
 \hbox{$#1\mkern-2mu\mathord-\mkern-2mu$}\hfill\mkern-6mu\mathord-$}%
\def\leftrightarrowfill@#1{$#1\m@th\mathord\leftarrow
\mkern-6mu\cleaders
 \hbox{$#1\mkern-2mu\mathord-\mkern-2mu$}\hfill
 \mkern-6mu\mathord\rightarrow$}%
\def\overrightarrow{\mathpalette\overrightarrow@}%
\def\overrightarrow@#1#2{\vbox{\ialign{##\crcr\rightarrowfill@#1\crcr
 \noalign{\kern-\ex@\nointerlineskip}$\m@th\hfil#1#2\hfil$\crcr}}}%

\def\overleftarrow{\mathpalette\overleftarrow@}%
\def\overleftarrow@#1#2{\vbox{\ialign{##\crcr\leftarrowfill@#1\crcr
 \noalign{\kern-\ex@\nointerlineskip}$\m@th\hfil#1#2\hfil$\crcr}}}%
\def\overleftrightarrow{\mathpalette\overleftrightarrow@}%
\def\overleftrightarrow@#1#2{\vbox{\ialign{##\crcr
   \leftrightarrowfill@#1\crcr
 \noalign{\kern-\ex@\nointerlineskip}$\m@th\hfil#1#2\hfil$\crcr}}}%
\def\underrightarrow{\mathpalette\underrightarrow@}%
\def\underrightarrow@#1#2{\vtop{\ialign{##\crcr$\m@th\hfil#1#2\hfil
  $\crcr\noalign{\nointerlineskip}\rightarrowfill@#1\crcr}}}%

\def\underleftarrow{\mathpalette\underleftarrow@}%
\def\underleftarrow@#1#2{\vtop{\ialign{##\crcr$\m@th\hfil#1#2\hfil
  $\crcr\noalign{\nointerlineskip}\leftarrowfill@#1\crcr}}}%
\def\underleftrightarrow{\mathpalette\underleftrightarrow@}%
\def\underleftrightarrow@#1#2{\vtop{\ialign{##\crcr$\m@th
  \hfil#1#2\hfil$\crcr
 \noalign{\nointerlineskip}\leftrightarrowfill@#1\crcr}}}%
%%%%%%%%%%%%%%%%%%%%%

\def\qopnamewl@#1{\mathop{\operator@font#1}\nlimits@}
\let\nlimits@\displaylimits
\def\setboxz@h{\setbox\z@\hbox}

\def\varlim@#1#2{\mathop{\vtop{\ialign{##\crcr
 \hfil$#1\m@th\operator@font lim$\hfil\crcr
 \noalign{\nointerlineskip}#2#1\crcr
 \noalign{\nointerlineskip\kern-\ex@}\crcr}}}}

 \def\rightarrowfill@#1{\m@th\setboxz@h{$#1-$}\ht\z@\z@
  $#1\copy\z@\mkern-6mu\cleaders
  \hbox{$#1\mkern-2mu\box\z@\mkern-2mu$}\hfill
  \mkern-6mu\mathord\rightarrow$}
\def\leftarrowfill@#1{\m@th\setboxz@h{$#1-$}\ht\z@\z@
  $#1\mathord\leftarrow\mkern-6mu\cleaders
  \hbox{$#1\mkern-2mu\copy\z@\mkern-2mu$}\hfill
  \mkern-6mu\box\z@$}

\def\projlim{\qopnamewl@{proj\,lim}}
\def\injlim{\qopnamewl@{inj\,lim}}
\def\varinjlim{\mathpalette\varlim@\rightarrowfill@}
\def\varprojlim{\mathpalette\varlim@\leftarrowfill@}
\def\varliminf{\mathpalette\varliminf@{}}
\def\varliminf@#1{\mathop{\underline{\vrule\@depth.2\ex@\@width\z@
   \hbox{$#1\m@th\operator@font lim$}}}}
\def\varlimsup{\mathpalette\varlimsup@{}}
\def\varlimsup@#1{\mathop{\overline
  {\hbox{$#1\m@th\operator@font lim$}}}}

%
%Companion to stackrel
%
%
%
% These are AMS environments that will be defined to
% be verbatims if amstex has not actually been 
% loaded
%
%
\begingroup \catcode `|=0 \catcode `[= 1
\catcode`]=2 \catcode `\{=12 \catcode `\}=12
\catcode`\\=12 
|gdef|@alignverbatim#1\end{align}[#1|end[align]]
|gdef|@salignverbatim#1\end{align*}[#1|end[align*]]

|gdef|@alignatverbatim#1\end{alignat}[#1|end[alignat]]
|gdef|@salignatverbatim#1\end{alignat*}[#1|end[alignat*]]

|gdef|@xalignatverbatim#1\end{xalignat}[#1|end[xalignat]]
|gdef|@sxalignatverbatim#1\end{xalignat*}[#1|end[xalignat*]]

|gdef|@gatherverbatim#1\end{gather}[#1|end[gather]]
|gdef|@sgatherverbatim#1\end{gather*}[#1|end[gather*]]

|gdef|@gatherverbatim#1\end{gather}[#1|end[gather]]
|gdef|@sgatherverbatim#1\end{gather*}[#1|end[gather*]]

|gdef|@multilineverbatim#1\end{multiline}[#1|end[multiline]]
|gdef|@smultilineverbatim#1\end{multiline*}[#1|end[multiline*]]

|gdef|@arraxverbatim#1\end{arrax}[#1|end[arrax]]
|gdef|@sarraxverbatim#1\end{arrax*}[#1|end[arrax*]]

|gdef|@tabulaxverbatim#1\end{tabulax}[#1|end[tabulax]]
|gdef|@stabulaxverbatim#1\end{tabulax*}[#1|end[tabulax*]]

|endgroup

\def\align{\@verbatim \frenchspacing\@vobeyspaces \@alignverbatim
You are using the "align" environment in a style in which it is not defined.}

\@namedef{align*}{\@verbatim\@salignverbatim
You are using the "align*" environment in a style in which it is not defined.}
\expandafter\let\csname endalign*\endcsname =\endtrivlist

\def\alignat{\@verbatim \frenchspacing\@vobeyspaces \@alignatverbatim
You are using the "alignat" environment in a style in which it is not defined.}

\@namedef{alignat*}{\@verbatim\@salignatverbatim
You are using the "alignat*" environment in a style in which it is not defined.}
\expandafter\let\csname endalignat*\endcsname =\endtrivlist

\def\xalignat{\@verbatim \frenchspacing\@vobeyspaces \@xalignatverbatim
You are using the "xalignat" environment in a style in which it is not defined.}

\@namedef{xalignat*}{\@verbatim\@sxalignatverbatim
You are using the "xalignat*" environment in a style in which it is not defined.}
\expandafter\let\csname endxalignat*\endcsname =\endtrivlist

\def\gather{\@verbatim \frenchspacing\@vobeyspaces \@gatherverbatim
You are using the "gather" environment in a style in which it is not defined.}

\@namedef{gather*}{\@verbatim\@sgatherverbatim
You are using the "gather*" environment in a style in which it is not defined.}
\expandafter\let\csname endgather*\endcsname =\endtrivlist

\def\multiline{\@verbatim \frenchspacing\@vobeyspaces \@multilineverbatim
You are using the "multiline" environment in a style in which it is not defined.}

\@namedef{multiline*}{\@verbatim\@smultilineverbatim
You are using the "multiline*" environment in a style in which it is not defined.}
\expandafter\let\csname endmultiline*\endcsname =\endtrivlist

\def\arrax{\@verbatim \frenchspacing\@vobeyspaces \@arraxverbatim
You are using a type of "array" construct that is only allowed in AmS-LaTeX.}

\def\tabulax{\@verbatim \frenchspacing\@vobeyspaces \@tabulaxverbatim
You are using a type of "tabular" construct that is only allowed in AmS-LaTeX.}

\@namedef{arrax*}{\@verbatim\@sarraxverbatim
You are using a type of "array*" construct that is only allowed in AmS-LaTeX.}
\expandafter\let\csname endarrax*\endcsname =\endtrivlist

\@namedef{tabulax*}{\@verbatim\@stabulaxverbatim
You are using a type of "tabular*" construct that is only allowed in AmS-LaTeX.}
\expandafter\let\csname endtabulax*\endcsname =\endtrivlist

% macro to simulate ams tag construct

% This macro is a fix to the equation environment
 \def\endequation{%
     \ifmmode\ifinner % FLEQN hack
      \iftag@
        \addtocounter{equation}{-1} % undo the increment made in the begin part
        $\hfil
           \displaywidth\linewidth\@taggnum\egroup \endtrivlist
        \global\tag@false
        \global\@ignoretrue   
      \else
        $\hfil
           \displaywidth\linewidth\@eqnnum\egroup \endtrivlist
        \global\tag@false
        \global\@ignoretrue 
      \fi
     \else   
      \iftag@
        \addtocounter{equation}{-1} % undo the increment made in the begin part
        \eqno \hbox{\@taggnum}
        \global\tag@false%
        $$\global\@ignoretrue
      \else
        \eqno \hbox{\@eqnnum}% $$ BRACE MATCHING HACK
        $$\global\@ignoretrue
      \fi
     \fi\fi
 } 

 \newif\iftag@ \tag@false
 
 \def\TCItag{\@ifnextchar*{\@TCItagstar}{\@TCItag}}
 \def\@TCItag#1{%
     \global\tag@true
     \global\def\@taggnum{(#1)}}
 \def\@TCItagstar*#1{%
     \global\tag@true
     \global\def\@taggnum{#1}}

  \@ifundefined{tag}{
     \def\tag{\@ifnextchar*{\@tagstar}{\@tag}}
     \def\@tag#1{%
         \global\tag@true
         \global\def\@taggnum{(#1)}}
     \def\@tagstar*#1{%
         \global\tag@true
         \global\def\@taggnum{#1}}
  }{}
% Do not add anything to the end of this file.  
% The last section of the file is loaded only if 
% amstex has not been.

\makeatother

\begin{document}
\title{Improved Sobolev inequality under constraints}
\author{Fengbo Hang}
\address{Courant Institute, 251 Mercer Street, New York, NY 10012}
\email{fengbo@cims.nyu.edu}
\author{Xiaodong Wang}
\address{Department of Mathematics, Michigan State University, East Lansing,
MI 48824}
\email{xwang@math.msu.edu}

\begin{abstract}
We give a new proof of Aubin's improvement of the Sobolev inequality on $%
\mathbb{S}^{n}$ under the vanishing of first order moments of the area
element and generalize it to higher order moments case. By careful study of
an extremal problem on $\mathbb{S}^{n}$, we determine the constant
explicitly in the second order moments case.
\end{abstract}

\maketitle

\section{Introduction\label{sec1}}

Let $n\in \mathbb{N}$. For $1<p<n$ we denote $p^{\ast }=\frac{pn}{n-p}$. The
classical Sobolev inequality states that there exists a positive constant $%
c\left( n,p\right) $ with%
\begin{equation}
\left\Vert \varphi \right\Vert _{L^{p^{\ast }}}\leq c\left( n,p\right)
\left\Vert \nabla \varphi \right\Vert _{L^{p}}  \label{eq1.1}
\end{equation}%
for any $\varphi \in C_{c}^{\infty }\left( \mathbb{R}^{n}\right) $. Let $%
S_{n,p}$ be the smallest possible choice for the constant $c\left(
n,p\right) $ i.e.%
\begin{eqnarray}
S_{n,p} &=&\sup_{\varphi \in C_{c}^{\infty }\left( \mathbb{R}^{n}\right)
\backslash \left\{ 0\right\} }\frac{\left\Vert \varphi \right\Vert
_{L^{p^{\ast }}}}{\left\Vert \nabla \varphi \right\Vert _{L^{p}}}
\label{eq1.2} \\
&=&\sup \left\{ \frac{\left\Vert u\right\Vert _{L^{p^{\ast }}}}{\left\Vert
\nabla u\right\Vert _{L^{p}}}:u\in L^{p^{\ast }}\left( \mathbb{R}^{n}\right)
\backslash \left\{ 0\right\} \text{ s.t. }\nabla u\in L^{p}\left( \mathbb{R}%
^{n}\right) \right\} .  \notag
\end{eqnarray}%
When no confusion can happen, we write%
\begin{equation}
S_{p}=S_{n,p}.  \label{eq1.3}
\end{equation}%
It follows from \cite{A1, T} that%
\begin{equation}
S_{n,p}=\frac{1}{n}\left( \frac{n\left( p-1\right) }{n-p}\right) ^{1-\frac{1%
}{p}}\left( \frac{n!}{\Gamma \left( \frac{n}{p}\right) \Gamma \left( n+1-%
\frac{n}{p}\right) \left\vert \mathbb{S}^{n-1}\right\vert }\right) ^{\frac{1%
}{n}}  \label{eq1.4}
\end{equation}%
and it is achieved if and only if%
\begin{equation}
u=\pm \left( a+b\left\vert x-x_{0}\right\vert ^{\frac{p}{p-1}}\right) ^{1-%
\frac{n}{p}}  \label{eq1.5}
\end{equation}%
for some $a>0$, $b>0$ and $x_{0}\in \mathbb{R}^{n}$. Here%
\begin{equation}
\Gamma \left( \alpha \right) =\int_{0}^{\infty }t^{\alpha -1}e^{-t}dt
\label{eq1.6}
\end{equation}%
for $\alpha >0$. When $p=2$, the sharp constant $S_{2}$ is closely related
to the classical Yamabe problem (\cite{He2, SY}).

Let $\left( M^{n},g\right) $ be a smooth compact Riemannian manifold of
dimension $n$ and $1<p<n$. Aubin \cite{A1} showed that for any $\varepsilon
>0$, we have%
\begin{equation}
\left\Vert u\right\Vert _{L^{p^{\ast }}\left( M\right) }^{p}\leq \left(
S_{p}^{p}+\varepsilon \right) \left\Vert \nabla u\right\Vert _{L^{p}\left(
M\right) }^{p}+c\left( \varepsilon \right) \left\Vert u\right\Vert
_{L^{p}\left( M\right) }^{p}  \label{eq1.7}
\end{equation}%
for $u\in W^{1,p}\left( M\right) $.

On the other hand, in \cite{A2} he further proved that for any $\varepsilon
>0$ and $u\in W^{1,p}\left( \mathbb{S}^{n}\right) $ with%
\begin{equation}
\int_{\mathbb{S}^{n}}x_{i}\left\vert u\right\vert ^{p^{\ast }}d\mu \left(
x\right) =0\text{ for }i=1,2,\cdots ,n+1,  \label{eq1.8}
\end{equation}%
here $\mu $ is the measure associated with the standard metric on $\mathbb{S}%
^{n}$, we have%
\begin{equation}
\left\Vert u\right\Vert _{L^{p^{\ast }}\left( \mathbb{S}^{n}\right)
}^{p}\leq \left( 2^{-\frac{p}{n}}S_{p}^{p}+\varepsilon \right) \left\Vert
\nabla u\right\Vert _{L^{p}\left( \mathbb{S}^{n}\right) }^{p}+c\left(
\varepsilon \right) \left\Vert u\right\Vert _{L^{p}\left( \mathbb{S}%
^{n}\right) }^{p}.  \label{eq1.9}
\end{equation}%
In other words, the constant $S_{p}^{p}+\varepsilon $ in (\ref{eq1.7}) can
be improved to $2^{-\frac{p}{n}}S_{p}^{p}+\varepsilon $ when the first order
moments of $\left\vert u\right\vert ^{p^{\ast }}$ are all zeros. (\ref{eq1.9}%
) is closely related to the problem of prescribing scalar curvature on $%
\mathbb{S}^{n}$ (see for example \cite{A2, ChY, He1}). The main aim of this
note is to generalize (\ref{eq1.9}) to higher order moments case. To state
our results we introduce some notations. For a nonnegative integer $k$, we
denote%
\begin{eqnarray}
\mathcal{P}_{k} &=&\left\{ \text{all polynomials on }\mathbb{R}^{n+1}\text{
with degree at most }k\right\} ;  \label{eq1.10} \\
\overset{\circ }{\mathcal{P}}_{k} &=&\left\{ f\in \mathcal{P}_{k}:\int_{%
\mathbb{S}^{n}}fd\mu =0\right\} .  \label{eq1.11}
\end{eqnarray}%
Here $\mu $ is the standard measure on $\mathbb{S}^{n}$.

For $m\in \mathbb{N}$, we denote%
\begin{eqnarray}
&&\mathcal{M}_{m}^{c}\left( \mathbb{S}^{n}\right)  \label{eq1.12} \\
&=&\left\{ \nu :\nu \text{ is a probability measure on }\mathbb{S}^{n}\text{
supported on countably}\right.  \notag \\
&&\left. \text{many points s.t. }\int_{\mathbb{S}^{n}}fd\nu =0\text{ for all 
}f\in \overset{\circ }{\mathcal{P}}_{m}\right\}  \notag \\
&=&\left\{ \nu :\nu \text{ is a probability measure on }\mathbb{S}^{n}\text{
supported on countably}\right.  \notag \\
&&\left. \text{many points s.t. }\frac{1}{\left\vert \mathbb{S}%
^{n}\right\vert }\int_{\mathbb{S}^{n}}fd\mu =\int_{\mathbb{S}^{n}}fd\nu 
\text{ for all }f\in \mathcal{P}_{m}\right\} .  \notag
\end{eqnarray}%
For $0<\theta <1$, we define%
\begin{eqnarray}
&&\Theta \left( m,\theta ,n\right)  \label{eq1.13} \\
&=&\inf \left\{ \sum_{i}\nu _{i}^{\theta }:\nu \in \mathcal{M}_{m}^{c}\left( 
\mathbb{S}^{n}\right) \text{ is supported on }\left\{ \xi _{i}\right\}
\subset \mathbb{S}^{n}\text{, }\nu _{i}=\nu \left( \left\{ \xi _{i}\right\}
\right) \right\} .  \notag
\end{eqnarray}

\begin{theorem}
\label{thm1.1}Assume $n\in \mathbb{N}$, $1<p<n$ and $m\in \mathbb{N}$.
Denote $p^{\ast }=\frac{pn}{n-p}$. Then for any $\varepsilon >0$, and $u\in
W^{1,p}\left( \mathbb{S}^{n}\right) $ with%
\begin{equation}
\int_{\mathbb{S}^{n}}f\left\vert u\right\vert ^{p^{\ast }}d\mu =0
\label{eq1.14}
\end{equation}%
for all $f\in \overset{\circ }{\mathcal{P}}_{m}$ (defined in (\ref{eq1.11}%
)), we have%
\begin{equation}
\left\Vert u\right\Vert _{L^{p^{\ast }}\left( \mathbb{S}^{n}\right)
}^{p}\leq \left( \frac{S_{n,p}^{p}}{\Theta \left( m,\frac{n-p}{n},n\right) }%
+\varepsilon \right) \left\Vert \nabla u\right\Vert _{L^{p}\left( \mathbb{S}%
^{n}\right) }^{p}+c_{\varepsilon }\left\Vert u\right\Vert _{L^{p}\left( 
\mathbb{S}^{n}\right) }^{p}.  \label{eq1.15}
\end{equation}%
Here $S_{n,p}$ is given by (\ref{eq1.4}), $\Theta \left( m,\frac{n-p}{n}%
,n\right) $ is given by (\ref{eq1.13}).
\end{theorem}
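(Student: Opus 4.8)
The plan is to argue by contradiction, via a concentration--compactness analysis in the spirit of P.-L. Lions. Suppose (\ref{eq1.15}) fails for some $\varepsilon_{0}>0$: then for every $c>0$ there is a function satisfying the moment constraint (\ref{eq1.14}) but violating (\ref{eq1.15}) with $\varepsilon=\varepsilon_{0}$, $c_{\varepsilon}=c$. Taking $c=j\in\mathbb{N}$, noting the violating function is nonzero, and normalizing it to have unit $L^{p^{\ast}}$ norm (both the constraint (\ref{eq1.14}) and the violated inequality are homogeneous in $u$), we obtain $u_{j}\in W^{1,p}(\mathbb{S}^{n})$ with $\Vert u_{j}\Vert_{L^{p^{\ast}}}=1$, $\int_{\mathbb{S}^{n}}f\vert u_{j}\vert^{p^{\ast}}d\mu=0$ for all $f\in\overset{\circ}{\mathcal{P}}_{m}$, and
\[
1>\Big(\frac{S_{n,p}^{p}}{\Theta(m,\frac{n-p}{n},n)}+\varepsilon_{0}\Big)\Vert\nabla u_{j}\Vert_{L^{p}}^{p}+j\Vert u_{j}\Vert_{L^{p}}^{p}.
\]
Hence $\{u_{j}\}$ is bounded in $W^{1,p}(\mathbb{S}^{n})$ and $\Vert u_{j}\Vert_{L^{p}}\to0$; passing to a subsequence, $u_{j}\rightharpoonup u$ in $W^{1,p}$, and by Rellich $u_{j}\to u$ in $L^{p}$, so $u\equiv0$. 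Also $\vert\nabla u_{j}\vert^{p}d\mu\rightharpoonup\mu_{\infty}$ and $\vert u_{j}\vert^{p^{\ast}}d\mu\rightharpoonup\nu_{\infty}$ weak-$\ast$ as Radon measures on the compact manifold $\mathbb{S}^{n}$.

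Next I would apply the concentration--compactness lemma for the embedding $W^{1,p}\hookrightarrow L^{p^{\ast}}$ on a compact manifold; the constant appearing in it is precisely the optimal Euclidean constant $S_{n,p}^{p}$, obtained by rescaling near each concentration point and using that the sharp Sobolev inequality on $\mathbb{R}^{n}$ has the extremals (\ref{eq1.5}). Since the weak $W^{1,p}$ limit vanishes, the lemma yields an at most countable set $\{\xi_{i}\}\subset\mathbb{S}^{n}$ and weights $\nu_{i},\mu_{i}>0$ with
\[
\nu_{\infty}=\sum_{i}\nu_{i}\,\delta_{\xi_{i}},\qquad \mu_{\infty}\ge\sum_{i}\mu_{i}\,\delta_{\xi_{i}},\qquad \nu_{i}^{\frac{n-p}{n}}\le S_{n,p}^{p}\,\mu_{i}.
\]
Testing the weak-$\ast$ convergence against the continuous function $1$ gives $\sum_{i}\nu_{i}=\lim_{j}\Vert u_{j}\Vert_{L^{p^{\ast}}}^{p^{\ast}}=1$, so $\nu_{\infty}$ is a probability measure supported on countably many points; testing against an arbitrary $f\in\overset{\circ}{\mathcal{P}}_{m}$ (polynomials are continuous) and using (\ref{eq1.14}) gives $\int_{\mathbb{S}^{n}}f\,d\nu_{\infty}=0$. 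Thus $\nu_{\infty}\in\mathcal{M}_{m}^{c}(\mathbb{S}^{n})$, and the definition (\ref{eq1.13}) of $\Theta$ forces $\sum_{i}\nu_{i}^{\frac{n-p}{n}}\ge\Theta(m,\frac{n-p}{n},n)$.

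Combining the above closes the argument: since $1$ is continuous on the compact $\mathbb{S}^{n}$, $\Vert\nabla u_{j}\Vert_{L^{p}}^{p}\to\mu_{\infty}(\mathbb{S}^{n})\ge\sum_{i}\mu_{i}\ge S_{n,p}^{-p}\sum_{i}\nu_{i}^{\frac{n-p}{n}}\ge S_{n,p}^{-p}\,\Theta(m,\frac{n-p}{n},n)$, whereas the displayed inequality above gives $\Vert\nabla u_{j}\Vert_{L^{p}}^{p}<\big(S_{n,p}^{p}/\Theta+\varepsilon_{0}\big)^{-1}<\Theta/S_{n,p}^{p}$, a contradiction. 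The only genuinely nonroutine ingredient is the concentration--compactness step — specifically, checking that $\nu_{\infty}$ is purely atomic with total mass exactly $1$ (no diffuse part and no escape of mass, which relies on compactness of $\mathbb{S}^{n}$ together with $u_{j}\to0$ in $L^{p}$) and that the Euclidean optimal constant is the correct one in $\nu_{i}^{(n-p)/n}\le S_{n,p}^{p}\mu_{i}$; the rest is bookkeeping. As a sanity check, for $m=1$ the only constraint on $\nu_{\infty}$ is that its barycenter in $\mathbb{R}^{n+1}$ vanish, whose cheapest realization is a pair of antipodal atoms of mass $\tfrac12$, so $\Theta(1,\frac{n-p}{n},n)=2^{p/n}$ and one recovers Aubin's factor $2^{-p/n}$ in (\ref{eq1.9}).
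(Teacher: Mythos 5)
Your proof is correct and follows essentially the same route as the paper: normalize a contradicting sequence to unit $L^{p^{*}}$ norm, note the $L^{p}$ norms vanish so the weak limit is zero, apply the Lions concentration--compactness lemma to get a purely atomic $\nu$ with $\nu_{i}^{(n-p)/n}\le S_{n,p}^{p}\mu_{i}$, pass the moment constraints to the limit to conclude $\nu\in\mathcal{M}_{m}^{c}(\mathbb{S}^{n})$, and invoke the definition of $\Theta$ to derive a contradiction. The only cosmetic difference is bookkeeping: the paper packages the final estimate as $\Theta\le S_{p}^{p}/\alpha$ against the choice of $\alpha$, while you bound $\|\nabla u_{j}\|_{L^{p}}^{p}$ from above and below, but these are the same contradiction.
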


We will prove in Proposition \ref{prop3.1} that $\Theta \left( 1,\theta
,n\right) =2^{1-\theta }$. In particular, (\ref{eq1.9}) follows from Theorem %
\ref{thm1.1}. In Proposition \ref{prop3.2} we will show $\Theta \left(
2,\theta ,n\right) =\left( n+2\right) ^{1-\theta }$. Hence we have

\begin{corollary}
\label{cor1.1}Assume $n\in \mathbb{N}$ and $1<p<n$. Denote $p^{\ast }=\frac{%
pn}{n-p}$. Then for any $\varepsilon >0$, and $u\in W^{1,p}\left( \mathbb{S}%
^{n}\right) $ with%
\begin{equation}
\int_{\mathbb{S}^{n}}f\left\vert u\right\vert ^{p^{\ast }}d\mu =0
\label{eq1.16}
\end{equation}%
for all $f\in \overset{\circ }{\mathcal{P}}_{2}$ (defined in (\ref{eq1.11}%
)), we have%
\begin{equation}
\left\Vert u\right\Vert _{L^{p^{\ast }}\left( \mathbb{S}^{n}\right)
}^{p}\leq \left( \frac{S_{n,p}^{p}}{\left( n+2\right) ^{\frac{p}{n}}}%
+\varepsilon \right) \left\Vert \nabla u\right\Vert _{L^{p}\left( \mathbb{S}%
^{n}\right) }^{p}+c_{\varepsilon }\left\Vert u\right\Vert _{L^{p}\left( 
\mathbb{S}^{n}\right) }^{p}.  \label{eq1.17}
\end{equation}%
Here $S_{n,p}$ is given by (\ref{eq1.4}).
\end{corollary}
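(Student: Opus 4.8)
The corollary is simply the case $m=2$ of Theorem~\ref{thm1.1}, once the constant is evaluated. Hypothesis (\ref{eq1.16}) is exactly (\ref{eq1.14}) with $m=2$, so Theorem~\ref{thm1.1} gives (\ref{eq1.15}) with $\Theta\!\left(2,\frac{n-p}{n},n\right)$ in the denominator. Since $0<p<n$ we have $\theta:=\frac{n-p}{n}\in(0,1)$ and $1-\theta=\frac{p}{n}$, so Proposition~\ref{prop3.2} (which asserts $\Theta(2,\theta,n)=(n+2)^{1-\theta}$ for every $\theta\in(0,1)$) gives $\Theta\!\left(2,\frac{n-p}{n},n\right)=(n+2)^{p/n}$, turning (\ref{eq1.15}) into (\ref{eq1.17}). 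Hence the only thing requiring a genuine argument, beyond the already-granted Theorem~\ref{thm1.1}, is Proposition~\ref{prop3.2}, whose proof I now sketch.

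For the upper bound $\Theta(2,\theta,n)\le(n+2)^{1-\theta}$ I would exhibit an explicit competitor. Let $\xi_{0},\dots,\xi_{n+1}\in\mathbb{S}^{n}$ be the vertices of a regular simplex, e.g.\ $\xi_{k}=\sqrt{\tfrac{n+2}{n+1}}\bigl(e_{k}-\tfrac{1}{n+2}\mathbf{1}\bigr)$ after identifying $\mathbb{R}^{n+1}$ with $\{y\in\mathbb{R}^{n+2}:\sum_{k}y_{k}=0\}$. Then $|\xi_{k}|=1$, $\sum_{k}\xi_{k}=0$, and by symmetry (Schur's lemma for the standard representation of $S_{n+2}$) $\tfrac{1}{n+2}\sum_{k}\xi_{k}\xi_{k}^{\top}=\tfrac{1}{n+1}I$, which equals $\tfrac{1}{|\mathbb{S}^{n}|}\int_{\mathbb{S}^{n}}xx^{\top}d\mu$; since also $\tfrac{1}{|\mathbb{S}^{n}|}\int_{\mathbb{S}^{n}}x\,d\mu=0$, the measure $\nu=\tfrac{1}{n+2}\sum_{k}\delta_{\xi_{k}}$ matches $\tfrac{1}{|\mathbb{S}^{n}|}\mu$ on all of $\mathcal{P}_{2}$, i.e.\ $\nu\in\mathcal{M}_{2}^{c}(\mathbb{S}^{n})$, with $\sum_{k}\nu_{k}^{\theta}=(n+2)(n+2)^{-\theta}=(n+2)^{1-\theta}$.

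For the lower bound $\Theta(2,\theta,n)\ge(n+2)^{1-\theta}$, take any $\nu=\sum_{i}\nu_{i}\delta_{\xi_{i}}\in\mathcal{M}_{2}^{c}(\mathbb{S}^{n})$. Testing the moment conditions on the linear and quadratic coordinate functions yields $\sum_{i}\nu_{i}\xi_{i}=0$ and $\sum_{i}\nu_{i}\xi_{i}\xi_{i}^{\top}=\tfrac{1}{n+1}I$. Introduce the nonnegative quadratic $g(t)=\tfrac{(n+1)^{2}}{(n+2)^{2}}\bigl(t+\tfrac{1}{n+1}\bigr)^{2}$, so that $g\ge0$ on $[-1,1]$, $g(1)=1$, and $g(-\tfrac1{n+1})=0$. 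Since $\xi_{j}\mapsto g(\langle\xi_{i},\xi_{j}\rangle)$ is a polynomial of degree $\le2$, expanding the square and using $|\xi_{i}|=1$ together with the two moment identities gives $\sum_{j}\nu_{j}g(\langle\xi_{i},\xi_{j}\rangle)=\tfrac{1}{n+2}$ for every $i$. Summing against $\nu_{i}$ and separating the diagonal,
\[
\frac{1}{n+2}=\sum_{i,j}\nu_{i}\nu_{j}\,g(\langle\xi_{i},\xi_{j}\rangle)=\sum_{i}\nu_{i}^{2}+\sum_{i\neq j}\nu_{i}\nu_{j}\,g(\langle\xi_{i},\xi_{j}\rangle)\ \ge\ \sum_{i}\nu_{i}^{2}.
\]
Finally, since $0<\theta<1$, Hölder's inequality gives $1=\sum_{i}\nu_{i}\le\bigl(\sum_{i}\nu_{i}^{\theta}\bigr)^{1/(2-\theta)}\bigl(\sum_{i}\nu_{i}^{2}\bigr)^{(1-\theta)/(2-\theta)}$, so combining with $\sum_{i}\nu_{i}^{2}\le\tfrac1{n+2}$ yields $\sum_{i}\nu_{i}^{\theta}\ge(n+2)^{1-\theta}$; taking the infimum over $\nu$ finishes the lower bound, hence Proposition~\ref{prop3.2}, hence Corollary~\ref{cor1.1}.

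I expect the one delicate point to be the choice of $g$ in the lower bound: one needs a quadratic, nonnegative on $[-1,1]$, whose diagonal value $g(1)$ and whose $j$-averaged value $\sum_{j}\nu_{j}g(\langle\xi_{i},\xi_{j}\rangle)$ — the latter forced by the first two moments to be a constant independent of $i$ — are balanced so that discarding the nonnegative off-diagonal terms produces the \emph{sharp} estimate $\sum_{i}\nu_{i}^{2}\le\tfrac{1}{n+2}$; the shape $\bigl(t+\tfrac1{n+1}\bigr)^{2}$ is dictated by the value $-\tfrac1{n+1}$ of the inner product between distinct vertices of the extremal simplex, at which $g$ must vanish for equality to propagate. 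The remaining points — that $\sum_{i}\nu_{i}\xi_{i}\xi_{i}^{\top}=\tfrac1{n+1}I$ indeed reproduces the uniform second moments, and that the series manipulations are legitimate for a measure supported on countably many points (immediate from $\sum_{i}\nu_{i}=1$ and boundedness of $g$) — are routine.
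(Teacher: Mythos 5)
Your reduction of the corollary to Theorem~\ref{thm1.1} with $m=2$ plus the evaluation $\Theta(2,\frac{n-p}{n},n)=(n+2)^{p/n}$ is exactly what the paper intends, and your upper-bound construction (the regular simplex, checked via first and second moments) coincides with the paper's. The genuine divergence is in the proof of the lower bound $\Theta(2,\theta,n)\ge(n+2)^{1-\theta}$, and your argument is correct but different. The paper turns the moment conditions into an orthonormal system $u_0,\dots,u_{n+1}$ in $\mathbb{R}^N$ (Lemma~\ref{lem3.3}), so that Bessel's inequality yields the \emph{pointwise} bound $\nu_i=\frac{1}{n+2}\sum_{j=0}^{n+1}\langle e_i,u_j\rangle^2\le\frac1{n+2}$; then, since $0<\theta<1$, $\bigl((n+2)\nu_i\bigr)^{\theta}\ge (n+2)\nu_i$, and summing over $i$ and using $\sum_i\nu_i=1$ gives $\sum_i\nu_i^\theta\ge(n+2)^{1-\theta}$ in one line. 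You instead prove only the weaker $\ell^2$ bound $\sum_i\nu_i^2\le\frac1{n+2}$ via a cubature-type identity: the nonnegative quadratic $g(t)=\frac{(n+1)^2}{(n+2)^2}\bigl(t+\frac1{n+1}\bigr)^2$ chosen so that $g(1)=1$, $g$ vanishes at the simplex inner product, and $\sum_j\nu_j g(\langle\xi_i,\xi_j\rangle)=\frac1{n+2}$ by the first- and second-moment identities; dropping the nonnegative off-diagonal block gives $\sum_i\nu_i^2\le\frac1{n+2}$, and a three-way Hölder/log-convexity interpolation between $\ell^\theta$, $\ell^1$ and $\ell^2$ finishes it. I checked the algebra of $g$, the moment computation $\sum_j\nu_j\langle\xi_i,\xi_j\rangle^2=\xi_i^{\top}\bigl(\frac1{n+1}I\bigr)\xi_i=\frac1{n+1}$, the exponent bookkeeping in Hölder ($a=\frac{\theta}{2-\theta}$, $b=\frac{2(1-\theta)}{2-\theta}$, $p=2-\theta$, $q=\frac{2-\theta}{1-\theta}$), and the passage to countable support; all are sound. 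What the paper's route buys is a stronger intermediate conclusion (a uniform bound on each atom, not just on $\sum\nu_i^2$), which also drives its classification of equality cases cleanly via Parseval; what your route buys is a template that does not require packaging the constraints as orthonormality of vectors and is closer in spirit to the cubature-formula approach credited to Putterman in the paper's note added in proof, which handles $m=3$. Both are valid; you should just note that your lower-bound argument replaces, rather than reconstructs, the paper's Lemmas~\ref{lem3.3}--\ref{lem3.4} and the Bessel step.
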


In Lemma \ref{lem4.1} we will show the constant $\frac{S_{n,p}^{p}}{\left(
n+2\right) ^{\frac{p}{n}}}+\varepsilon $ is almost optimal. It remains an
interesting open question to find the exact value of $\Theta \left( m,\theta
,n\right) $ for $m\geq 3$.

Our derivation of Theorem \ref{thm1.1} is motivated by recent work in \cite%
{ChH, Ha}, where Aubin's Moser-Trudinger-Onofri inequality on $\mathbb{S}%
^{n} $ in \cite{A2} is generalized to higher order moments cases. The
sequence of inequalities in \cite{ChH, Ha} are motivated by similar
inequalities on $\mathbb{S}^{1}$ (see \cite{GrS, OPS, W}). The crucial point
of \cite{ChH, Ha} is some new developments of concentration compactness
principle for Sobolev spaces in critical dimensions, i.e. for Sobolev
functions in $W^{1,n}\left( M^{n},g\right) $ (see \cite{ChH, Ha, Ln1, Ln2}).
These developments provide a new proof of Aubin's first order moment
inequality and make the derivation of higher order moments case possible. On
the other hand, the concentration compactness principle in subcritical case
(i.e. for Sobolev functions in $W^{1,p}\left( M^{n},g\right) $, $1<p<n$) had
been well understood in \cite{He2, Ln1, Ln2}. We will apply it to prove
Theorem \ref{thm1.1}.

In Section \ref{sec2}, we recall the concentration compactness principle and
apply it to derive Theorem \ref{thm1.1}. In Section \ref{sec3}, we study the
extremal problem on $\mathbb{S}^{n}$ associated with Theorem \ref{thm1.1}
and compute $\Theta \left( m,\theta ,n\right) $ in several cases. In Section %
\ref{sec4}, we first show the coefficients of $\left\Vert \nabla
u\right\Vert _{L^{p}}^{p}$ in Corollary \ref{cor1.1} are almost optimal,
then we discuss generalizations of Theorem \ref{thm1.1} to functions in
higher order Sobolev spaces.

We would like to thank the referee for careful reading of the manuscript and
many helpful suggestions.

\section{Concentration compactness principle\label{sec2}}

We first recall the concentration compactness principle (\cite{He2, Ln1, Ln2}%
). It is closely related to Aubin's almost sharp inequality (\ref{eq1.7}).

\begin{theorem}
\label{thm2.1}Let $\left( M^{n},g\right) $ be a smooth compact Riemannian
manifold. Assume $1<p<n$, $u_{i}\in W^{1,p}\left( M\right) $ such that $%
u_{i}\rightharpoonup u$ weakly in $W^{1,p}\left( M\right) $,%
\begin{eqnarray}
\left\vert \nabla u_{i}\right\vert ^{p}d\mu &\rightarrow &\left\vert \nabla
u\right\vert ^{p}d\mu +\sigma \text{ as measure,}  \label{eq2.1} \\
\left\vert u_{i}\right\vert ^{p^{\ast }}d\mu &\rightarrow &\left\vert
u\right\vert ^{p^{\ast }}d\mu +\nu \text{ as measure,}  \label{eq2.2}
\end{eqnarray}%
here $\mu $ is the measure associated with metric $g$, then we can find
countably many points $x_{i}\in M$ such that%
\begin{eqnarray}
\nu &=&\sum_{i}\nu _{i}\delta _{x_{i}},  \label{eq2.3} \\
\nu _{i}^{\frac{1}{p^{\ast }}} &\leq &S_{p}\sigma _{i}^{\frac{1}{p}},
\label{eq2.4}
\end{eqnarray}%
here $\nu _{i}=\nu \left( \left\{ x_{i}\right\} \right) $, $\sigma
_{i}=\sigma \left( \left\{ x_{i}\right\} \right) $.
\end{theorem}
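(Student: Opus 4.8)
The plan is to prove this (the subcritical concentration--compactness principle of P.-L.\ Lions, here on a compact manifold) by the standard argument, taking care to keep the sharp constant. I would use two inputs: Aubin's almost sharp inequality (\ref{eq1.7}), which I rewrite, after taking $p$-th roots, as $\|v\|_{L^{p^{\ast}}(M)}\le(S_{p}+\varepsilon)\|\nabla v\|_{L^{p}(M)}+c_{\varepsilon}\|v\|_{L^{p}(M)}$ for all $v\in W^{1,p}(M)$; and the Rellich--Kondrachov theorem, which gives $u_{i}\to u$ strongly in $L^{p}(M)$, hence a.e.\ along a subsequence. Note also that weak lower semicontinuity shows $\sigma\ge0$ and $\nu\ge0$, so both are finite positive Borel measures.

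The first step would be to show, directly and with the sharp constant, that $\nu(\{x_{0}\})^{1/p^{\ast}}\le S_{p}\,\sigma(\{x_{0}\})^{1/p}$ for every $x_{0}\in M$. Fix $x_{0}$ and, in a normal chart, pick cut-offs $\psi_{\rho}$ with $0\le\psi_{\rho}\le1$, $\psi_{\rho}\equiv1$ on $B_{\rho}(x_{0})$, $\psi_{\rho}\equiv0$ off $B_{2\rho}(x_{0})$ and $|\nabla\psi_{\rho}|\le C\rho^{-1}$, and apply the displayed inequality to $\psi_{\rho}u_{i}$. Expanding $\nabla(\psi_{\rho}u_{i})=\psi_{\rho}\nabla u_{i}+u_{i}\nabla\psi_{\rho}$ and letting $i\to\infty$, the two terms carrying no gradient of $u_{i}$ converge, by $L^{p}$-convergence, to the same expressions with $u$ in place of $u_{i}$, while $\int\psi_{\rho}^{p^{\ast}}|u_{i}|^{p^{\ast}}\,d\mu$ and $\int\psi_{\rho}^{p}|\nabla u_{i}|^{p}\,d\mu$ converge to $\int\psi_{\rho}^{p^{\ast}}\,d(|u|^{p^{\ast}}d\mu+\nu)$ and $\int\psi_{\rho}^{p}\,d(|\nabla u|^{p}d\mu+\sigma)$ by hypothesis. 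Then I would let $\rho\to0$: the integrals against $|u|^{p^{\ast}}d\mu$, $|\nabla u|^{p}d\mu$ and $|u|^{p}d\mu$ tend to $0$; $\int\psi_{\rho}^{p^{\ast}}\,d\nu\to\nu(\{x_{0}\})$ and $\int\psi_{\rho}^{p}\,d\sigma\to\sigma(\{x_{0}\})$ by continuity from above of finite measures; and the one delicate term is handled by H\"older, $\int|\nabla\psi_{\rho}|^{p}|u|^{p}\,d\mu\le C\rho^{-p}\,|B_{2\rho}|^{1-p/p^{\ast}}\big(\int_{B_{2\rho}}|u|^{p^{\ast}}\big)^{p/p^{\ast}}$, where the identity $n(1-p/p^{\ast})=p$ keeps $\rho^{-p}|B_{2\rho}|^{1-p/p^{\ast}}$ bounded so that the term is $\le C\big(\int_{B_{2\rho}}|u|^{p^{\ast}}\big)^{p/p^{\ast}}\to0$. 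This leaves $\nu(\{x_{0}\})^{1/p^{\ast}}\le(S_{p}+\varepsilon)\sigma(\{x_{0}\})^{1/p}$, hence the claim.

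The second step would be to prove that $\nu$ is purely atomic. Put $v_{i}=u_{i}-u\rightharpoonup0$ in $W^{1,p}(M)$. A localized Brezis--Lieb lemma---from the pointwise bound $\big||u_{i}|^{p^{\ast}}-|v_{i}|^{p^{\ast}}-|u|^{p^{\ast}}\big|\le\varepsilon|v_{i}|^{p^{\ast}}+c_{\varepsilon}|u|^{p^{\ast}}$, the a.e.\ convergence $v_{i}\to0$ along a subsequence (then a subsequence-of-a-subsequence argument to recover the full sequence), and dominated convergence---gives $|v_{i}|^{p^{\ast}}\,d\mu\rightharpoonup\nu$ weakly-$\ast$. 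Let $\tilde\sigma\ge0$ be a weak-$\ast$ limit of $|\nabla v_{i}|^{p}\,d\mu$ along a further subsequence. Applying the displayed inequality to $hv_{i}$, $h\in C^{1}(M)$---now all lower-order terms vanish because $v_{i}\to0$ in $L^{p}$---and passing to the limit gives $\big(\int|h|^{p^{\ast}}\,d\nu\big)^{1/p^{\ast}}\le S_{p}\big(\int|h|^{p}\,d\tilde\sigma\big)^{1/p}$, which by regularity upgrades to $\nu(E)\le S_{p}^{\,p^{\ast}}\,\tilde\sigma(E)^{p^{\ast}/p}$ for every Borel $E\subset M$. Since $p^{\ast}/p>1$, the elementary fact that a finite positive measure $\lambda$ with $\lambda(E)\le C\,\eta(E)^{\alpha}$ for all $E$ (some finite $\eta$, some $\alpha>1$) must be purely atomic then forces $\nu=\sum_{i}\nu_{i}\delta_{x_{i}}$ for a countable set $\{x_{i}\}\subset M$; evaluating the first step at each $x_{i}$ gives $\nu_{i}^{1/p^{\ast}}\le S_{p}\sigma_{i}^{1/p}$, which finishes the proof.

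The main difficulty---and the reason for this two-step structure---is that weak $W^{1,p}$-convergence does not give a.e.\ convergence of gradients, so one cannot simply apply the Brezis--Lieb lemma to $|\nabla u_{i}|^{p}$ to split off $|\nabla v_{i}|^{p}$, and in particular one never obtains a usable comparison of $\tilde\sigma$ with $\sigma$. The first step circumvents this by working purely locally, at a single point, where the one dangerous contribution $\int|\nabla\psi_{\rho}|^{p}|u|^{p}\,d\mu$ is exactly annihilated by the scaling identity $n(1-p/p^{\ast})=p$ and the sharp constant $S_{p}$ already appears; the second step then only needs the harmless $L^{p^{\ast}}$ Brezis--Lieb lemma for $v_{i}=u_{i}-u$, and since the atoms of $\nu$ have already been bounded by those of $\sigma$ in the first step, no further control of $\tilde\sigma$ is required.
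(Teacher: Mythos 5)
The paper does not prove Theorem \ref{thm2.1}; it simply recalls it as a known result with references to Hebey and Lions (\cite{He2, Ln1, Ln2}), so there is no proof in the paper to compare against. Your argument is a correct reconstruction of the standard concentration--compactness proof and follows the same two-step scheme one finds in those references: a cut-off computation at a single point $x_0$, applying Aubin's almost-sharp inequality (\ref{eq1.7}) in $p$-th root form to $\psi_\rho u_i$ and then sending $i\to\infty$, $\rho\to0$, $\varepsilon\to0$, gives the sharp atom bound $\nu(\{x_0\})^{1/p^*}\le S_p\sigma(\{x_0\})^{1/p}$; and the Brezis--Lieb reduction to $v_i=u_i-u$ (so $|v_i|^{p^*}d\mu\rightharpoonup\nu$), followed by the reverse H\"older inequality $\nu(E)\le S_p^{p^*}\tilde\sigma(E)^{p^*/p}$ with exponent $p^*/p>1$, yields atomicity of $\nu$. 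The two delicate points are handled correctly: the term $\int|\nabla\psi_\rho|^p|u|^p\,d\mu$ is controlled via H\"older with conjugate exponent $n$ and the scaling identity $n(1-p/p^*)=p$, which makes $\rho^{-p}|B_{2\rho}|^{1-p/p^*}$ bounded so the contribution vanishes as $\rho\to0$ because $u\in L^{p^*}(M)$; and the a.e.\ convergence needed for Brezis--Lieb holds only along a subsequence, so your sub-subsequence argument is indeed required to promote $\int\bigl||u_i|^{p^*}-|v_i|^{p^*}-|u|^{p^*}\bigr|\,d\mu\to0$ to the full sequence.
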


We will prove Theorem \ref{thm1.1} using the concentration compactness
principle in the same spirit as \cite{ChH, Ha}. As a warm up, we derive a
small variation of \cite[Theorem 1]{A2} by this approach.

\begin{proposition}
\label{prop2.1}Let $\left( M^{n},g\right) $ be a smooth compact Riemannian
manifold with dimension $n$ and $1<p<n$, $p^{\ast }=\frac{pn}{n-p}$. For $%
1\leq i<l$ ($l$ can be $\infty $), we have $f_{i}\in C\left( M,\mathbb{R}%
\right) $ such that for some positive constant $\lambda $ and $\Lambda $,%
\begin{equation}
\lambda \leq \sum_{1\leq i<l}\left\vert f_{i}\right\vert ^{\frac{n-p}{p}%
}\leq \Lambda .  \label{eq2.5}
\end{equation}%
Then for any $\varepsilon >0$ and $u\in W^{1,p}\left( M\right) $ with%
\begin{equation}
\left\vert \int_{M}f_{i}\left\vert u\right\vert ^{p^{\ast }}d\mu \right\vert
\leq b_{i}\left\Vert u\right\Vert _{L^{p}}^{p^{\ast }}\text{ for }1\leq i<l,
\label{eq2.6}
\end{equation}%
here $b_{i}$ is a nonnegative number depending only on $i$, we have%
\begin{equation}
\left\Vert u\right\Vert _{L^{p^{\ast }}}^{p}\leq \left( 2^{-\frac{p}{n}}%
\frac{\Lambda }{\lambda }S_{n,p}^{p}+\varepsilon \right) \left\Vert \nabla
u\right\Vert _{L^{p}\left( M\right) }^{p}+c\left( \varepsilon
,b_{i},f_{i}\right) \left\Vert u\right\Vert _{L^{p}\left( M\right) }^{p}.
\label{eq2.7}
\end{equation}%
Here $S_{n,p}$ is given by (\ref{eq1.4}).
\end{proposition}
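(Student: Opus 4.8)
The plan is to argue by contradiction using the concentration compactness principle, Theorem~\ref{thm2.1}, in the spirit of \cite{ChH, Ha}. Suppose (\ref{eq2.7}) fails. Then there are $\varepsilon _{0}>0$ and $u_{j}\in W^{1,p}(M)$ satisfying (\ref{eq2.6}), which after normalization may be taken with $\left\Vert u_{j}\right\Vert _{L^{p^{\ast }}}=1$, such that
\[
1>\left( 2^{-\frac{p}{n}}\tfrac{\Lambda }{\lambda }S_{n,p}^{p}+\varepsilon _{0}\right) \left\Vert \nabla u_{j}\right\Vert _{L^{p}}^{p}+j\left\Vert u_{j}\right\Vert _{L^{p}}^{p}.
\]
Hence $\left\Vert \nabla u_{j}\right\Vert _{L^{p}}$ is bounded and $\left\Vert u_{j}\right\Vert _{L^{p}}\rightarrow 0$, so after passing to a subsequence $u_{j}\rightharpoonup 0$ in $W^{1,p}(M)$ (the weak limit is $0$ by Rellich's theorem, as $\left\Vert u_{j}\right\Vert _{L^{p}}\rightarrow 0$), $\left\vert \nabla u_{j}\right\vert ^{p}d\mu \rightarrow \sigma $ and $\left\vert u_{j}\right\vert ^{p^{\ast }}d\mu \rightarrow \nu $ as measures. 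Testing against the constant $1\in C(M)$ gives $\nu (M)=1$ and $\left\Vert \nabla u_{j}\right\Vert _{L^{p}}^{p}\rightarrow \sigma (M)$.

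By Theorem~\ref{thm2.1}, $\nu =\sum_{k}\nu _{k}\delta _{x_{k}}$ with $\sum_{k}\nu _{k}=\nu (M)=1$, and $\sigma (M)\geq \sum_{k}\sigma _{k}\geq S_{n,p}^{-p}\sum_{k}\nu _{k}^{p/p^{\ast }}=S_{n,p}^{-p}\sum_{k}\nu _{k}^{\frac{n-p}{n}}$. Moreover the constraint (\ref{eq2.6}) passes to the limit: since each $f_{i}\in C(M)$, $\int_{M}f_{i}\left\vert u_{j}\right\vert ^{p^{\ast }}d\mu \rightarrow \int_{M}f_{i}\,d\nu =\sum_{k}f_{i}(x_{k})\nu _{k}$, while the left side is bounded by $b_{i}\left\Vert u_{j}\right\Vert _{L^{p}}^{p^{\ast }}\rightarrow 0$; hence $\sum_{k}f_{i}(x_{k})\nu _{k}=0$ for every $i$.

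The heart of the proof is then the elementary claim: \emph{if $\nu _{k}\geq 0$, $\sum_{k}\nu _{k}=1$ and $\sum_{k}f_{i}(x_{k})\nu _{k}=0$ for all $i$, then $\sum_{k}\nu _{k}^{\frac{n-p}{n}}\geq 2^{\frac{p}{n}}\lambda /\Lambda $.} Let $\nu _{k_{0}}=\max _{k}\nu _{k}$. First, since $\nu _{k}\leq \nu _{k_{0}}$ for every $k$ and $\frac{n-p}{n}-1<0$, one has $\nu _{k}^{\frac{n-p}{n}}\geq \nu _{k_{0}}^{-p/n}\nu _{k}$, hence $\sum_{k}\nu _{k}^{\frac{n-p}{n}}\geq \nu _{k_{0}}^{-p/n}$. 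Second, from $|f_{i}(x_{k_{0}})|\nu _{k_{0}}=\big|\sum_{k\neq k_{0}}f_{i}(x_{k})\nu _{k}\big|\leq \sum_{k\neq k_{0}}|f_{i}(x_{k})|\nu _{k}$, raising to the power $\frac{n-p}{p}$, summing over $i$, and using $\lambda \leq \sum_{i}|f_{i}(x)|^{\frac{n-p}{p}}\leq \Lambda $ --- by Jensen's inequality when $\frac{n-p}{p}\geq 1$, and by subadditivity of $t\mapsto t^{(n-p)/p}$ when $\frac{n-p}{p}<1$ --- one arrives at $\sum_{k\neq k_{0}}\nu _{k}^{\frac{n-p}{n}}\geq (\lambda /\Lambda )^{p/n}\nu _{k_{0}}^{\frac{n-p}{n}}$ (the exponents fit because the ratio of $\frac{n-p}{n}$ to $\frac{n-p}{p}$ equals $\frac{p}{n}$), so that $\sum_{k}\nu _{k}^{\frac{n-p}{n}}\geq \big(1+(\lambda /\Lambda )^{p/n}\big)\nu _{k_{0}}^{\frac{n-p}{n}}$. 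The first lower bound decreases and the second increases in $\nu _{k_{0}}\in (0,1]$; minimizing the larger of the two (the minimum occurring at $\nu _{k_{0}}^{-1}=1+(\lambda /\Lambda )^{p/n}$) yields $\sum_{k}\nu _{k}^{\frac{n-p}{n}}\geq \big(1+(\lambda /\Lambda )^{p/n}\big)^{p/n}$, and since $\frac{p}{n}\leq \frac{n}{p}$ and $\lambda \leq \Lambda $ a short computation gives $\big(1+(\lambda /\Lambda )^{p/n}\big)^{p/n}\geq 2^{p/n}\lambda /\Lambda $.

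Combining everything, $\sigma (M)\geq S_{n,p}^{-p}\cdot 2^{p/n}\lambda /\Lambda =\big(2^{-p/n}\tfrac{\Lambda }{\lambda }S_{n,p}^{p}\big)^{-1}$, whereas $\sigma (M)=\lim _{j}\left\Vert \nabla u_{j}\right\Vert _{L^{p}}^{p}\leq \big(2^{-p/n}\tfrac{\Lambda }{\lambda }S_{n,p}^{p}+\varepsilon _{0}\big)^{-1}$, which is the desired contradiction. I expect the elementary claim to be the only real difficulty: producing the sharp factor $2^{p/n}$ forces one to show that, under the vanishing moment relations, no single atom can carry too much mass (in the extremal configuration the mass splits into two equal pieces), and the Hölder/Jensen step must be organized in two cases according as $\frac{n-p}{p}\geq 1$ or $<1$. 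Everything else is routine once Theorem~\ref{thm2.1} is in hand.
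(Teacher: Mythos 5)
Your proof is correct and uses the same overall strategy (contradiction plus the concentration–compactness principle, Theorem~\ref{thm2.1}), but the heart of the argument --- the discrete inequality for the atoms $\nu_k$ --- is handled by a genuinely different route. The paper fixes one $f_i$ at a time, splits the atoms into those where $f_i\geq 0$ and those where $f_i<0$, uses the balance $\sum_{j,f_i\geq 0}|f_i|\nu_j=\sum_{j,f_i<0}|f_i|\nu_j$ together with subadditivity of $t\mapsto t^{\theta}$ on each half to obtain
$\sum_j|f_i(x_j)|^{\theta}\nu_j\leq 2^{\theta-1}\sum_j|f_i(x_j)|^{\theta}\nu_j^{\theta}$,
and then sums over $i$. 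You instead single out the largest atom $\nu_{k_0}$, extract two independent lower bounds for $\sum_k\nu_k^{\theta}$ (one from $\nu_k\leq\nu_{k_0}$, the other from the moment condition via Jensen/subadditivity and a power-mean step to convert from exponent $\tfrac{n-p}{p}$ to $\tfrac{n-p}{n}$), and take the minimum of their maximum; the sharp factor $2^{p/n}$ then emerges from $(1+(\lambda/\Lambda)^{p/n})^{p/n}\geq 2^{p/n}\lambda/\Lambda$, an easy monotonicity check. Both arguments give the same constant $2^{1-\theta}\lambda/\Lambda$. Your version is more elaborate (the case split in $q=(n-p)/p\gtrless 1$, the interpolation between exponents, the one-variable optimization), while the paper's is more direct. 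One worthwhile observation: the paper's proof actually invokes (\ref{eq2.5}) with exponent $\theta=\tfrac{n-p}{n}$ on the $|f_i|$'s, whereas the hypothesis is stated with exponent $\tfrac{n-p}{p}$; this appears to be a typo in the paper. Your argument is the one consistent with the exponent as written. Also note that it is this "no single atom can dominate'' structure that your argument exposes, and which the paper later exploits (for $m=1$) via the much cleaner Lemma~\ref{lem3.1} and~\ref{lem3.2}.

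Two minor points worth tightening if you were to write this up: (i) the passage from $\sum_{k\neq k_0}\nu_k^{q}\geq(\lambda/\Lambda)\nu_{k_0}^{q}$ (the subadditive case) to $\sum_{k\neq k_0}\nu_k^{\theta}\geq(\lambda/\Lambda)^{p/n}\nu_{k_0}^{\theta}$ should cite the monotonicity of $\ell^{r}$-norms, $\bigl(\sum a_k^{\theta}\bigr)^{1/\theta}\geq\bigl(\sum a_k^{q}\bigr)^{1/q}$ for $0<\theta<q$, explicitly; and (ii) in the Jensen case, the clean route is $\lambda\nu_{k_0}^{q}\leq\Lambda(1-\nu_{k_0})^{q}$ (Jensen with probability weights $\nu_k/(1-\nu_{k_0})$), hence $1-\nu_{k_0}\geq(\lambda/\Lambda)^{1/q}\nu_{k_0}$, and then $\sum_{k\neq k_0}\nu_k^{\theta}\geq(1-\nu_{k_0})^{\theta}$ by subadditivity of $t^{\theta}$. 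These are exactly the steps your "the exponents fit'' comment compresses, and they do go through.
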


\begin{proof}
Let $\alpha =2^{-\frac{p}{n}}\frac{\Lambda }{\lambda }S_{p}^{p}+\varepsilon $%
, here $S_{p}=S_{n,p}$. Assume (\ref{eq2.7}) is not true, then for any $k\in 
\mathbb{N}$, we can find a $u_{k}\in W^{1,p}\left( M\right) $ with%
\begin{equation}
\left\vert \int_{M}f_{i}\left\vert u_{k}\right\vert ^{p^{\ast }}d\mu
\right\vert \leq b_{i}\left\Vert u_{k}\right\Vert _{L^{p}}^{p^{\ast }}\text{
for }1\leq i<l  \label{eq2.8}
\end{equation}%
and%
\begin{equation}
\left\Vert u_{k}\right\Vert _{L^{p^{\ast }}}^{p}>\alpha \left\Vert \nabla
u_{k}\right\Vert _{L^{p}\left( M\right) }^{p}+k\left\Vert u_{k}\right\Vert
_{L^{p}\left( M\right) }^{p}.  \label{eq2.9}
\end{equation}%
By scaling we can assume%
\begin{equation}
\left\Vert u_{k}\right\Vert _{L^{p^{\ast }}}=1.  \label{eq2.10}
\end{equation}%
We have%
\begin{equation}
\left\Vert \nabla u_{k}\right\Vert _{L^{p}\left( M\right) }^{p}\leq \frac{1}{%
\alpha }\text{ and }\left\Vert u_{k}\right\Vert _{L^{p}\left( M\right)
}^{p}\leq \frac{1}{k}.  \label{eq2.11}
\end{equation}%
It follows that $u_{k}\rightharpoonup 0$ weakly in $W^{1,p}\left( M\right) $%
. After passing to a subsequence (still denoted as $u_{k}$) we have%
\begin{equation}
\left\vert \nabla u_{k}\right\vert ^{p}d\mu \rightarrow \sigma \text{ and }%
\left\vert u_{k}\right\vert ^{p^{\ast }}d\mu \rightarrow \nu  \label{eq2.12}
\end{equation}%
as measure. It follows from Theorem \ref{thm2.1} that we can find countably
many points $x_{j}\in M$ such that%
\begin{equation}
\nu =\sum_{j}\nu _{j}\delta _{x_{j}}\text{ and }\nu _{j}^{\frac{1}{p^{\ast }}%
}\leq S_{p}\sigma _{j}^{\frac{1}{p}}.  \label{eq2.13}
\end{equation}%
Here $\nu _{j}=\nu \left( \left\{ x_{j}\right\} \right) $ and $\sigma
_{j}=\sigma \left( \left\{ x_{j}\right\} \right) $. For convenience we denote%
\begin{equation}
\theta =\frac{p}{p^{\ast }}=\frac{n-p}{n}.  \label{eq2.14}
\end{equation}%
We have%
\begin{equation}
\sum_{j}\nu _{j}=\nu \left( M\right) =1  \label{eq2.15}
\end{equation}%
and%
\begin{equation}
\sum_{j}\nu _{j}^{\theta }\leq S_{p}^{p}\sum_{j}\sigma _{j}\leq
S_{p}^{p}\sigma \left( M\right) \leq \frac{S_{p}^{p}}{\alpha }.
\label{eq2.16}
\end{equation}%
It follows from (\ref{eq2.8}) and (\ref{eq2.11}) that for $1\leq i<l$,%
\begin{equation*}
\int_{M}f_{i}d\nu =0.
\end{equation*}%
In other words,%
\begin{equation}
\sum_{j}f_{i}\left( x_{j}\right) \nu _{j}=0.  \label{eq2.17}
\end{equation}%
Hence%
\begin{equation*}
\sum_{j,f_{i}\left( x_{j}\right) \geq 0}\left\vert f_{i}\left( x_{j}\right)
\right\vert \nu _{j}=\sum_{j,f_{i}\left( x_{j}\right) <0}\left\vert
f_{i}\left( x_{j}\right) \right\vert \nu _{j}.
\end{equation*}%
We have%
\begin{equation*}
\left( \sum_{j}\left\vert f_{i}\left( x_{j}\right) \right\vert \nu
_{j}\right) ^{\theta }=2^{\theta }\left( \sum_{j,f_{i}\left( x_{j}\right)
\geq 0}\left\vert f_{i}\left( x_{j}\right) \right\vert \nu _{j}\right)
^{\theta }\leq 2^{\theta }\sum_{j,f_{i}\left( x_{j}\right) \geq 0}\left\vert
f_{i}\left( x_{j}\right) \right\vert ^{\theta }\nu _{j}^{\theta }
\end{equation*}%
and%
\begin{equation*}
\left( \sum_{j}\left\vert f_{i}\left( x_{j}\right) \right\vert \nu
_{j}\right) ^{\theta }=2^{\theta }\left( \sum_{j,f_{i}\left( x_{j}\right)
<0}\left\vert f_{i}\left( x_{j}\right) \right\vert \nu _{j}\right) ^{\theta
}\leq 2^{\theta }\sum_{j,f_{i}\left( x_{j}\right) <0}\left\vert f_{i}\left(
x_{j}\right) \right\vert ^{\theta }\nu _{j}^{\theta }.
\end{equation*}%
Summing up these two inequalities we get%
\begin{equation}
\left( \sum_{j}\left\vert f_{i}\left( x_{j}\right) \right\vert \nu
_{j}\right) ^{\theta }\leq 2^{\theta -1}\sum_{j}\left\vert f_{i}\left(
x_{j}\right) \right\vert ^{\theta }\nu _{j}^{\theta }.  \label{eq2.18}
\end{equation}%
To continue we observe that by Holder inequality,%
\begin{equation*}
\sum_{j}\left\vert f_{i}\left( x_{j}\right) \right\vert ^{\theta }\nu
_{j}\leq \left( \sum_{j}\left\vert f_{i}\left( x_{j}\right) \right\vert \nu
_{j}\right) ^{\theta }\left( \sum_{j}\nu _{j}\right) ^{1-\theta }=\left(
\sum_{j}\left\vert f_{i}\left( x_{j}\right) \right\vert \nu _{j}\right)
^{\theta },
\end{equation*}%
hence%
\begin{equation*}
\sum_{j}\left\vert f_{i}\left( x_{j}\right) \right\vert ^{\theta }\nu
_{j}\leq 2^{\theta -1}\sum_{j}\left\vert f_{i}\left( x_{j}\right)
\right\vert ^{\theta }\nu _{j}^{\theta }.
\end{equation*}%
Summing up over $i$, we get%
\begin{equation*}
\sum_{1\leq i<l}\sum_{j}\left\vert f_{i}\left( x_{j}\right) \right\vert
^{\theta }\nu _{j}\leq 2^{\theta -1}\sum_{1\leq i<l}\sum_{j}\left\vert
f_{i}\left( x_{j}\right) \right\vert ^{\theta }\nu _{j}^{\theta }.
\end{equation*}%
Using (\ref{eq2.5}), (\ref{eq2.15}) and (\ref{eq2.16}), we get%
\begin{equation*}
\lambda \leq 2^{\theta -1}\Lambda \sum_{j}\nu _{j}^{\theta }\leq 2^{\theta
-1}\Lambda \frac{S_{p}^{p}}{\alpha }.
\end{equation*}%
Hence%
\begin{equation*}
\alpha \leq 2^{-\frac{p}{n}}\frac{\Lambda }{\lambda }S_{p}^{p}.
\end{equation*}%
This contradicts with the definition of $\alpha $.
\end{proof}

Now let us turn to Theorem \ref{thm1.1}. Indeed we will prove the following
slightly more general

\begin{theorem}
\label{thm2.2}Assume $n\in \mathbb{N}$, $1<p<n$ and $m\in \mathbb{N}$.
Denote $p^{\ast }=\frac{pn}{n-p}$. Then for any $\varepsilon >0$, and $u\in
W^{1,p}\left( \mathbb{S}^{n}\right) $ with%
\begin{equation}
\left\vert \int_{\mathbb{S}^{n}}f\left\vert u\right\vert ^{p^{\ast }}d\mu
\right\vert \leq b\left( f\right) \left\Vert u\right\Vert _{L^{p}}^{p^{\ast
}}  \label{eq2.19}
\end{equation}%
for all $f\in \overset{\circ }{\mathcal{P}}_{m}$ (defined in (\ref{eq1.11}%
)), here $b\left( f\right) $ is a positive number depending on $f$, we have%
\begin{equation}
\left\Vert u\right\Vert _{L^{p^{\ast }}\left( \mathbb{S}^{n}\right)
}^{p}\leq \left( \frac{S_{n,p}^{p}}{\Theta \left( m,\frac{n-p}{n},n\right) }%
+\varepsilon \right) \left\Vert \nabla u\right\Vert _{L^{p}\left( \mathbb{S}%
^{n}\right) }^{p}+c\left( \varepsilon ,b,m\right) \left\Vert u\right\Vert
_{L^{p}\left( \mathbb{S}^{n}\right) }^{p}.  \label{eq2.20}
\end{equation}%
Here $S_{n,p}$ is given by (\ref{eq1.4}), $\Theta \left( m,\frac{n-p}{n}%
,n\right) $ is given by (\ref{eq1.13}).
\end{theorem}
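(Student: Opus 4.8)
The plan is to repeat the contradiction argument from the proof of Proposition~\ref{prop2.1}, with the combinatorial step there (leading to (\ref{eq2.18})) replaced by a direct invocation of the definitions (\ref{eq1.12})--(\ref{eq1.13}) of $\mathcal{M}_{m}^{c}\left(\mathbb{S}^{n}\right)$ and $\Theta\left(m,\theta,n\right)$. Put $\theta=\frac{n-p}{n}=\frac{p}{p^{\ast}}$, $S_{p}=S_{n,p}$, and $\alpha=\frac{S_{n,p}^{p}}{\Theta\left(m,\theta,n\right)}+\varepsilon$. Suppose (\ref{eq2.20}) fails; then for every $k\in\mathbb{N}$ there is $u_{k}\in W^{1,p}\left(\mathbb{S}^{n}\right)$ satisfying (\ref{eq2.19}) for all $f\in\overset{\circ}{\mathcal{P}}_{m}$ while $\left\Vert u_{k}\right\Vert_{L^{p^{\ast}}}^{p}>\alpha\left\Vert\nabla u_{k}\right\Vert_{L^{p}}^{p}+k\left\Vert u_{k}\right\Vert_{L^{p}}^{p}$. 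After multiplying by a scalar we may assume $\left\Vert u_{k}\right\Vert_{L^{p^{\ast}}}=1$, so that $\left\Vert\nabla u_{k}\right\Vert_{L^{p}}^{p}\leq\frac{1}{\alpha}$ and $\left\Vert u_{k}\right\Vert_{L^{p}}^{p}\leq\frac{1}{k}\rightarrow0$; the weak limit of any $W^{1,p}$-convergent subsequence has vanishing $L^{p}$ norm by Rellich compactness, so $u_{k}\rightharpoonup0$ in $W^{1,p}\left(\mathbb{S}^{n}\right)$ along a subsequence.

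Using the uniform bounds on $\left\Vert\nabla u_{k}\right\Vert_{L^{p}}$ and $\left\Vert u_{k}\right\Vert_{L^{p^{\ast}}}$, after a further subsequence $\left\vert\nabla u_{k}\right\vert^{p}d\mu\rightarrow\sigma$ and $\left\vert u_{k}\right\vert^{p^{\ast}}d\mu\rightarrow\nu$ weakly as measures, so Theorem~\ref{thm2.1}, with limit function $0$, applies: there are distinct points $x_{j}\in\mathbb{S}^{n}$ with $\nu=\sum_{j}\nu_{j}\delta_{x_{j}}$ and $\nu_{j}^{1/p^{\ast}}\leq S_{p}\sigma_{j}^{1/p}$, where $\nu_{j}=\nu\left(\left\{x_{j}\right\}\right)$ and $\sigma_{j}=\sigma\left(\left\{x_{j}\right\}\right)$. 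Testing the two weak limits against the constant function $1$ gives $\nu\left(\mathbb{S}^{n}\right)=\lim_{k}\left\Vert u_{k}\right\Vert_{L^{p^{\ast}}}^{p^{\ast}}=1$ and $\sigma\left(\mathbb{S}^{n}\right)=\lim_{k}\left\Vert\nabla u_{k}\right\Vert_{L^{p}}^{p}\leq\frac{1}{\alpha}$, so $\nu$ is a probability measure supported on the countable set $\left\{x_{j}\right\}$. For each fixed $f\in\overset{\circ}{\mathcal{P}}_{m}$, which is continuous on $\mathbb{S}^{n}$, hypothesis (\ref{eq2.19}) gives
\[
\left\vert\int_{\mathbb{S}^{n}}f\,d\nu\right\vert=\lim_{k}\left\vert\int_{\mathbb{S}^{n}}f\left\vert u_{k}\right\vert^{p^{\ast}}d\mu\right\vert\leq b\left(f\right)\lim_{k}\left\Vert u_{k}\right\Vert_{L^{p}}^{p^{\ast}}=0,
\]
hence $\int_{\mathbb{S}^{n}}f\,d\nu=0$ for all such $f$. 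Therefore $\nu\in\mathcal{M}_{m}^{c}\left(\mathbb{S}^{n}\right)$, and by the definition (\ref{eq1.13}) of $\Theta$ we get $\sum_{j}\nu_{j}^{\theta}\geq\Theta\left(m,\theta,n\right)$.

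On the other hand, raising $\nu_{j}^{1/p^{\ast}}\leq S_{p}\sigma_{j}^{1/p}$ to the power $p$ gives $\nu_{j}^{\theta}\leq S_{p}^{p}\sigma_{j}$, and since the atoms are disjoint,
\[
\Theta\left(m,\theta,n\right)\leq\sum_{j}\nu_{j}^{\theta}\leq S_{p}^{p}\sum_{j}\sigma_{j}\leq S_{p}^{p}\,\sigma\left(\mathbb{S}^{n}\right)\leq\frac{S_{p}^{p}}{\alpha}.
\]
Thus $\alpha\leq\frac{S_{n,p}^{p}}{\Theta\left(m,\theta,n\right)}$, contradicting the choice of $\alpha$; this establishes (\ref{eq2.20}) and hence Theorem~\ref{thm2.2}. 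Theorem~\ref{thm1.1} is the special case: if (\ref{eq1.14}) holds, then (\ref{eq2.19}) holds with, say, $b\left(f\right)\equiv1$, and (\ref{eq2.20}) becomes (\ref{eq1.15}).

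I do not expect a serious obstacle: the deduction is essentially mechanical once Theorem~\ref{thm2.1} is in hand. The only point that needs a little care is the verification that $\nu\in\mathcal{M}_{m}^{c}\left(\mathbb{S}^{n}\right)$ — namely that the weak-$\ast$ limit of probability measures on the compact space $\mathbb{S}^{n}$ is again a probability measure, and that the moment constraints survive the limit, which is precisely why (\ref{eq2.19}) is phrased with the harmless factor $\left\Vert u\right\Vert_{L^{p}}^{p^{\ast}}$ on the right-hand side. The genuinely substantive content lies elsewhere: in the concentration compactness principle of Theorem~\ref{thm2.1}, and, in the opposite direction, in the extremal analysis of Section~\ref{sec3} that evaluates $\Theta\left(m,\theta,n\right)$ for small $m$.
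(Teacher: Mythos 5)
Your proposal is correct and follows essentially the same contradiction argument as the paper's own proof of Theorem~\ref{thm2.2}: normalize a putative counterexample sequence, pass to the concentration compactness limit via Theorem~\ref{thm2.1}, observe that the hypothesis (\ref{eq2.19}) together with $\left\Vert u_{k}\right\Vert_{L^{p}}\to 0$ forces the limit measure $\nu$ into $\mathcal{M}_{m}^{c}\left(\mathbb{S}^{n}\right)$, and then apply the definition of $\Theta$ to reach $\alpha\leq S_{p}^{p}/\Theta\left(m,\theta,n\right)$. The only difference from the paper is that you spell out a few routine verifications (Rellich compactness for the weak limit, $\nu$ being a probability measure) that the paper leaves implicit.
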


\begin{proof}
We will proceed in the same way as in the proof of Proposition \ref{prop2.1}%
. Let $S_{p}=S_{n,p}$ and%
\begin{equation}
\alpha =\frac{S_{p}^{p}}{\Theta \left( m,\frac{n-p}{n},n\right) }%
+\varepsilon .  \label{eq2.21}
\end{equation}%
If (\ref{eq2.20}) is not true, then for any $k\in \mathbb{N}$, we can find a 
$u_{k}\in W^{1,p}\left( \mathbb{S}^{n}\right) $ with%
\begin{equation}
\left\vert \int_{M}f\left\vert u_{k}\right\vert ^{p^{\ast }}d\mu \right\vert
\leq b\left( f\right) \left\Vert u_{k}\right\Vert _{L^{p}}^{p^{\ast }}\text{
for }f\in \overset{\circ }{\mathcal{P}}_{m}  \label{eq2.22}
\end{equation}%
and%
\begin{equation}
\left\Vert u_{k}\right\Vert _{L^{p^{\ast }}}^{p}>\alpha \left\Vert \nabla
u_{k}\right\Vert _{L^{p}\left( M\right) }^{p}+k\left\Vert u_{k}\right\Vert
_{L^{p}\left( M\right) }^{p}.  \label{eq2.23}
\end{equation}%
We can assume%
\begin{equation}
\left\Vert u_{k}\right\Vert _{L^{p^{\ast }}}=1.  \label{eq2.24}
\end{equation}%
Then%
\begin{equation}
\left\Vert \nabla u_{k}\right\Vert _{L^{p}}^{p}\leq \frac{1}{\alpha }\text{
and }\left\Vert u_{k}\right\Vert _{L^{p}}^{p}\leq \frac{1}{k}.
\label{eq2.25}
\end{equation}%
It follows that $u_{k}\rightharpoonup 0$ weakly in $W^{1,p}\left( \mathbb{S}%
^{n}\right) $. After passing to a subsequence (still denoted as $u_{k}$) we
have%
\begin{equation}
\left\vert \nabla u_{k}\right\vert ^{p}d\mu \rightarrow \sigma \text{ and }%
\left\vert u_{k}\right\vert ^{p^{\ast }}d\mu \rightarrow \nu  \label{eq2.26}
\end{equation}%
as measure. By Theorem \ref{thm2.1} we can find countably many points $\xi
_{j}\in \mathbb{S}^{n}$ such that%
\begin{equation}
\nu =\sum_{j}\nu _{j}\delta _{\xi _{j}}\text{ and }\nu _{j}^{\frac{1}{%
p^{\ast }}}\leq S_{p}\sigma _{j}^{\frac{1}{p}}.  \label{eq2.27}
\end{equation}%
Here $\nu _{j}=\nu \left( \left\{ \xi _{j}\right\} \right) $ and $\sigma
_{j}=\sigma \left( \left\{ \xi _{j}\right\} \right) $. Let%
\begin{equation}
\theta =\frac{p}{p^{\ast }}=\frac{n-p}{n}.  \label{eq2.28}
\end{equation}%
Then 
\begin{equation}
\nu \left( \mathbb{S}^{n}\right) =1\text{ and }\sigma \left( \mathbb{S}%
^{n}\right) \leq \frac{1}{\alpha }.  \label{eq2.29}
\end{equation}%
It follows from (\ref{eq2.22}) and (\ref{eq2.25}) that $\int_{\mathbb{S}%
^{n}}fd\nu =0$ for $f\in \overset{\circ }{\mathcal{P}}_{m}$, hence $\nu \in 
\mathcal{M}_{m}^{c}\left( \mathbb{S}^{n}\right) $ (see (\ref{eq1.12})). By
definition of $\Theta \left( m,\theta ,n\right) $ (see (\ref{eq1.13})), (\ref%
{eq2.27}) and (\ref{eq2.29}) we have%
\begin{equation*}
\Theta \left( m,\theta ,n\right) \leq \sum_{j}\nu _{j}^{\theta }\leq
\sum_{j}S_{p}^{p}\sigma _{j}\leq S_{p}^{p}\sigma \left( \mathbb{S}%
^{n}\right) \leq \frac{S_{p}^{p}}{\alpha }.
\end{equation*}%
Hence%
\begin{equation*}
\alpha \leq \frac{S_{p}^{p}}{\Theta \left( m,\theta ,n\right) }.
\end{equation*}%
This inequality contradicts with the choice of $\alpha $ in (\ref{eq2.21}).
\end{proof}

\section{An extremal problem on the sphere\label{sec3}}

The main aim of this section is to compute $\Theta \left( 1,\theta ,n\right) 
$, $\Theta \left( 2,\theta ,n\right) $ and $\Theta \left( m,\theta ,1\right) 
$. It remains an interesting problem to find $\Theta \left( m,\theta
,n\right) $ for all $m,n\in \mathbb{N}$ and $\theta \in \left( 0,1\right) $.

\begin{proposition}
\label{prop3.1}For $\theta \in \left( 0,1\right) $ and $n\in \mathbb{N}$,%
\begin{equation}
\Theta \left( 1,\theta ,n\right) =2^{1-\theta }.  \label{eq3.1}
\end{equation}%
Moreover $\Theta \left( 1,\theta ,n\right) $ is achieved at $\nu \in 
\mathcal{M}_{1}^{c}\left( \mathbb{S}^{n}\right) $ if and only if $\nu =\frac{%
1}{2}\delta _{\xi }+\frac{1}{2}\delta _{-\xi }$ for some $\xi \in \mathbb{S}%
^{n}$.
\end{proposition}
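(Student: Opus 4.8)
The plan is to first turn the combinatorial description of $\mathcal{M}_{1}^{c}(\mathbb{S}^{n})$ into a geometric one. Since $\mathcal{P}_{1}$ is the linear span of $1,x_{1},\cdots ,x_{n+1}$ and $\int_{\mathbb{S}^{n}}1\,d\mu =|\mathbb{S}^{n}|$ while $\int_{\mathbb{S}^{n}}x_{i}\,d\mu =0$, it follows that $\overset{\circ }{\mathcal{P}}_{1}$ is precisely the span of the coordinate functions $x_{1},\cdots ,x_{n+1}$. Hence an atomic probability measure $\nu =\sum_{i}\nu _{i}\delta _{\xi _{i}}$ on $\mathbb{S}^{n}$ (with the $\xi _{i}$ distinct, $\nu _{i}>0$ and $\sum_{i}\nu _{i}=1$) lies in $\mathcal{M}_{1}^{c}(\mathbb{S}^{n})$ if and only if $\int_{\mathbb{S}^{n}}x_{i}\,d\nu =0$ for every $i$, that is, if and only if its barycenter $\sum_{i}\nu _{i}\xi _{i}\in \mathbb{R}^{n+1}$ is the origin. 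Thus $\Theta (1,\theta ,n)$ is the infimum of $\sum_{i}\nu _{i}^{\theta }$ over all such ``balanced'' atomic probability measures.

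The key observation is that a balanced measure cannot put too much mass at any single point. For a fixed atom $\xi _{i_{0}}$, the relation $\nu _{i_{0}}\xi _{i_{0}}=-\sum_{i\neq i_{0}}\nu _{i}\xi _{i}$ (the series converging absolutely because $\sum_{i}\nu _{i}=1$) and the triangle inequality give $\nu _{i_{0}}=|\nu _{i_{0}}\xi _{i_{0}}|\leq \sum_{i\neq i_{0}}\nu _{i}|\xi _{i}|=1-\nu _{i_{0}}$, so $\nu _{i}\leq \frac{1}{2}$ for every atom. Since $0<\theta <1$ and $0<\nu _{i}\leq \frac{1}{2}$, we have $\nu _{i}^{\theta -1}\geq (\frac{1}{2})^{\theta -1}=2^{1-\theta }$, hence $\nu _{i}^{\theta }=\nu _{i}\cdot \nu _{i}^{\theta -1}\geq 2^{1-\theta }\nu _{i}$; summing over $i$ yields $\sum_{i}\nu _{i}^{\theta }\geq 2^{1-\theta }\sum_{i}\nu _{i}=2^{1-\theta }$, so $\Theta (1,\theta ,n)\geq 2^{1-\theta }$. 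For the matching upper bound, and to see the infimum is attained, I would take $\nu =\frac{1}{2}\delta _{\xi }+\frac{1}{2}\delta _{-\xi }$ for an arbitrary $\xi \in \mathbb{S}^{n}$: its barycenter vanishes, so $\nu \in \mathcal{M}_{1}^{c}(\mathbb{S}^{n})$, and $\sum_{i}\nu _{i}^{\theta }=2\cdot (\frac{1}{2})^{\theta }=2^{1-\theta }$.

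For the equality statement I would track where the estimate above was lossy. Equality in $\sum_{i}\nu _{i}^{\theta }\geq 2^{1-\theta }\sum_{i}\nu _{i}$ forces $\nu _{i}^{\theta -1}=2^{1-\theta }$, i.e. $\nu _{i}=\frac{1}{2}$, for every atom; as the masses sum to $1$ there are then exactly two atoms, say $\nu =\frac{1}{2}\delta _{\xi _{1}}+\frac{1}{2}\delta _{\xi _{2}}$, and the balance condition forces $\xi _{1}+\xi _{2}=0$, i.e. $\xi _{2}=-\xi _{1}$, which is the asserted form; conversely every such measure attains the value by the computation above. The proof is short, so I do not expect a genuine obstacle; the only points requiring a little care are the identification of $\overset{\circ }{\mathcal{P}}_{1}$ with the span of the coordinate functions and the fact that it is the pointwise bound $\nu _{i}\leq \frac{1}{2}$, rather than any counting of atoms, that makes the estimate insensitive to how many atoms $\nu $ has — so that permitting countably infinite supports does not lower the infimum below $2^{1-\theta }$.
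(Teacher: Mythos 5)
Your proof is correct, and it takes a genuinely different and substantially simpler route than the paper's. The paper proves the $m=1$ case through two lemmas: Lemma \ref{lem3.1} treats measures of finite support by a variational argument on the compact set $W_{N}$ — taking a minimizer, showing its atoms must be distinct, and then computing the first variation under rotating one atom to force the remaining atoms onto $\pm \xi_{k}$, which contradicts distinctness when $k\geq 3$ — and Lemma \ref{lem3.2} then transfers the lower bound and the equality characterization to countably infinite supports by a truncation-and-limit argument. You bypass all of this with a single a priori bound: the barycenter condition $\sum_{i}\nu _{i}\xi _{i}=0$ together with the triangle inequality (valid since $\sum_{i}\nu _{i}=1$ gives absolute convergence) yields $\nu _{i_{0}}=\bigl|\sum_{i\neq i_{0}}\nu _{i}\xi _{i}\bigr|\leq 1-\nu _{i_{0}}$, hence $\nu _{i}\leq \tfrac{1}{2}$ for every atom, and then the elementary pointwise inequality $\nu _{i}^{\theta }\geq 2^{1-\theta }\nu _{i}$ on $(0,\tfrac{1}{2}]$ sums to $\sum_{i}\nu _{i}^{\theta }\geq 2^{1-\theta }$ directly, with no distinction between finite and infinite supports. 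The equality case also drops out immediately, since $\nu _{i}^{\theta }>2^{1-\theta }\nu _{i}$ whenever $0<\nu _{i}<\tfrac{1}{2}$, forcing exactly two atoms of mass $\tfrac{1}{2}$, which the barycenter condition then makes antipodal. This is cleaner than what appears in the paper; the variational machinery used there for $\Theta(1,\theta,n)$ is unnecessary, though it is worth noting that your shortcut exploits the very special structure of the $m=1$ constraint (a single vector equation controlled by the triangle inequality) and does not obviously generalize to the $m\geq 2$ cases, which is presumably why the paper develops the heavier framework.
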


To prove Proposition \ref{prop3.1}, we first treat the case the measure $\nu 
$ is supported on finitely many points. For $N\in \mathbb{N}$, $N\geq 2$, we
define%
\begin{eqnarray}
W_{N} &=&\left\{ \left( \alpha _{1},\cdots ,\alpha _{N}\right) :\alpha
_{i}\geq 0\text{ for }1\leq i\leq N\text{, }\sum_{i=1}^{N}\alpha
_{i}=1,\right.  \label{eq3.2} \\
&&\left. \exists \xi _{1},\cdots ,\xi _{N}\in \mathbb{S}^{n}\text{ s.t. }%
\alpha _{1}\xi _{1}+\cdots +\alpha _{N}\xi _{N}=0\right\} .  \notag
\end{eqnarray}%
It is worth pointing out that $\xi _{1},\cdots ,\xi _{N}$ do not need to be
mutually different.\ It is clear that $W_{N}$ is a compact subset of $%
\mathbb{R}^{N}$.

\begin{lemma}
\label{lem3.1}For $0<\theta <1$ and $\left( \alpha _{1},\cdots ,\alpha
_{N}\right) \in W_{N}$, we have%
\begin{equation}
\alpha _{1}^{\theta }+\cdots +\alpha _{N}^{\theta }\geq 2^{1-\theta }.
\label{eq3.3}
\end{equation}%
Equality holds if and only if $\left( \alpha _{1},\cdots ,\alpha _{N}\right)
=\left( \frac{1}{2},\frac{1}{2},0,\cdots ,0\right) $ after permutation.
\end{lemma}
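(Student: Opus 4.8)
The plan is to observe that the geometric condition defining $W_{N}$ enters the problem only through a bound on the individual weights, after which (\ref{eq3.3}) becomes an elementary pointwise inequality. So first I would extract from the balancing vectors $\xi_{1},\dots,\xi_{N}$ just the fact that $\alpha_{j}\le\frac12$ for every $j$: since $|\xi_{i}|=1$, $\alpha_{i}\ge 0$ and $\sum_{i}\alpha_{i}=1$, one has $\alpha_{j}=|\alpha_{j}\xi_{j}|=\bigl|\,{-}\sum_{i\ne j}\alpha_{i}\xi_{i}\,\bigr|\le\sum_{i\ne j}\alpha_{i}=1-\alpha_{j}$, hence $\alpha_{j}\le\frac12$. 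Thus every point of $W_{N}$ is a probability vector all of whose entries are at most $\frac12$, and this is the only property of $W_{N}$ I will use.

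Next, for each index $j$ with $\alpha_{j}>0$ I would write $\alpha_{j}^{\theta}=\alpha_{j}^{\theta-1}\,\alpha_{j}$ and use that $t\mapsto t^{\theta-1}$ is strictly decreasing on $(0,\infty)$ (because $\theta-1<0$) together with $\alpha_{j}\le\frac12$ to get $\alpha_{j}^{\theta-1}\ge(\frac12)^{\theta-1}=2^{1-\theta}$, i.e. $\alpha_{j}^{\theta}\ge 2^{1-\theta}\alpha_{j}$; for $\alpha_{j}=0$ this is trivially an equality. Summing over $j$ and using $\sum_{j}\alpha_{j}=1$ yields $\sum_{j}\alpha_{j}^{\theta}\ge 2^{1-\theta}$, which is (\ref{eq3.3}).

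For the equality case, note that equality in the displayed sum forces $\alpha_{j}^{\theta}=2^{1-\theta}\alpha_{j}$ for every $j$; for $\alpha_{j}>0$ this reads $\alpha_{j}^{\theta-1}=(\frac12)^{\theta-1}$, and injectivity of $t\mapsto t^{\theta-1}$ on $(0,\infty)$ gives $\alpha_{j}=\frac12$. Hence each nonzero entry equals $\frac12$, and $\sum_{j}\alpha_{j}=1$ then forces exactly two of the $\alpha_{j}$ to equal $\frac12$ and the rest to vanish, i.e. $(\alpha_{1},\dots,\alpha_{N})=(\frac12,\frac12,0,\dots,0)$ after permutation. Conversely such a vector lies in $W_{N}$ — take $\xi_{1}=-\xi_{2}$ and the remaining $\xi_{i}$ arbitrary — and achieves equality, so the characterization is exactly as stated.

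I do not expect a real obstacle here; the only thing worth getting right is the realization that the balancing condition is needed only in the weak form $\max_{j}\alpha_{j}\le\frac12$, and that the equality discussion be complete in both directions. If a more structural argument were preferred, one could instead note that $W_{N}$ is contained in the polytope $\{\alpha\in\mathbb{R}^{N}:\alpha_{i}\ge 0,\ \sum_{i}\alpha_{i}=1,\ \alpha_{i}\le\frac12\}$ and minimize the strictly concave function $\alpha\mapsto\sum_{i}\alpha_{i}^{\theta}$ over it: the minimum is then attained only at vertices, and the vertices of this polytope are precisely the permutations of $(\frac12,\frac12,0,\dots,0)$. The direct pointwise estimate above is shorter, so that is what I would write.
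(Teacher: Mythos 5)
Your proof is correct, and it is a genuinely different and simpler route than the paper's. The paper proves Lemma \ref{lem3.1} by a variational argument: take a minimizer $w_0$ of $f(w)=\sum\alpha_i^\theta$ over the compact set $W_N$, suppose its support has size $k\ge 3$, show that the $\xi_i$ must be mutually distinct (else merging two would strictly decrease $f$), locate an index $k$ with $\alpha_k^{\theta-1}\ne\lambda$, and then run a first-order perturbation argument (rotating one $\xi_1$ in a tangent direction $\eta$) to force $\xi_i=\pm\xi_k$ for all $i$, which contradicts distinctness when $k\ge 3$. You instead observe that the only consequence of the balancing condition $\sum_i\alpha_i\xi_i=0$ that is actually needed is the pointwise bound $\alpha_j\le\frac12$ (from $\alpha_j=|\sum_{i\ne j}\alpha_i\xi_i|\le 1-\alpha_j$), after which $\alpha_j^\theta=\alpha_j^{\theta-1}\alpha_j\ge 2^{1-\theta}\alpha_j$ and summation over $j$ gives the bound, with the equality case falling out from the strict monotonicity of $t\mapsto t^{\theta-1}$. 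Your argument avoids both the compactness/existence-of-minimizer step and the perturbation calculus entirely, and it transfers verbatim to the countable-support situation of Lemma \ref{lem3.2} (the series $\sum_i\alpha_i\xi_i$ is absolutely convergent, and the same estimate yields $\alpha_j\le\frac12$ there too), so it would let the authors dispense with the truncation-and-limit argument as well. What the paper's approach buys is a template that parallels the more delicate equality analysis needed elsewhere in Section 3, but for this particular lemma your direct pointwise estimate is shorter and cleaner.
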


\begin{proof}
For $w\in W_{N}$, $w=\left( \alpha _{1},\cdots ,\alpha _{N}\right) $, we
denote%
\begin{equation}
f\left( w\right) =\alpha _{1}^{\theta }+\cdots +\alpha _{N}^{\theta }.
\label{eq3.4}
\end{equation}%
We have%
\begin{equation}
f\left( w\right) \geq \left( \alpha _{1}+\cdots +\alpha _{N}\right) ^{\theta
}=1.  \label{eq3.5}
\end{equation}%
Let%
\begin{equation}
\lambda =\min_{W_{N}}f=f\left( w_{0}\right)  \label{eq3.6}
\end{equation}%
for some $w_{0}\in W_{N}$. Since $\left( \frac{1}{2},\frac{1}{2},0,\cdots
,0\right) \in W_{N}$, we see%
\begin{equation}
\lambda \leq f\left( \frac{1}{2},\frac{1}{2},0,\cdots ,0\right) =2^{1-\theta
}.  \label{eq3.7}
\end{equation}%
After permutation, we assume%
\begin{equation}
\alpha _{1}>0,\alpha _{2}>0,\cdots ,\alpha _{k}>0,\alpha _{k+1}=0,\cdots
,\alpha _{N}=0.  \label{eq3.8}
\end{equation}

If $k=2$, then we can find $\xi _{1},\xi _{2}\in \mathbb{S}^{n}$ s.t. $%
\alpha _{1}\xi _{1}+\alpha _{2}\xi _{2}=0$. It follows that $\alpha
_{1}=\alpha _{2}$. Using $\alpha _{1}+\alpha _{2}=1$, we see $\alpha
_{1}=\alpha _{2}=\frac{1}{2}$ and $\xi _{2}=-\xi _{1}$. Hence $\lambda
=2^{1-\theta }$.

Next we want to show $k$ can not be larger than $2$. Indeed if $k\geq 3$, we
can find $\xi _{1},\cdots ,\xi _{k}\in \mathbb{S}^{n}$ s.t.%
\begin{equation}
\alpha _{1}\xi _{1}+\cdots +\alpha _{k}\xi _{k}=0.  \label{eq3.9}
\end{equation}%
We first observe that for $i\neq j$, we must have $\xi _{i}\neq \xi _{j}$.
If this is not the case, say $\xi _{1}=\xi _{2}$, then%
\begin{equation*}
\left( \alpha _{1}+\alpha _{2},\alpha _{3},\cdots ,\alpha _{k},0,\cdots
,0\right) \in W_{N}.
\end{equation*}%
Since $0<\theta <1$,%
\begin{eqnarray*}
&&f\left( \alpha _{1}+\alpha _{2},\alpha _{3},\cdots ,\alpha _{k},0,\cdots
,0\right) \\
&=&\left( \alpha _{1}+\alpha _{2}\right) ^{\theta }+\alpha _{3}^{\theta
}+\cdots +\alpha _{k}^{\theta } \\
&<&\alpha _{1}^{\theta }+\alpha _{2}^{\theta }+\cdots +\alpha _{k}^{\theta }
\\
&=&\lambda ,
\end{eqnarray*}%
this contradicts with the definition of $\lambda $ (see (\ref{eq3.6})).

To continue, we observe that for some $1\leq i\leq k$, we have $\alpha
_{i}^{\theta -1}\neq \lambda $. Indeed if $\alpha _{i}^{\theta -1}=\lambda $
for all $i\leq k$, then $\alpha _{i}=\frac{1}{k}$ for $i\leq k$ and $\lambda
=f\left( w_{0}\right) =k^{1-\theta }$, this contradicts with (\ref{eq3.7}).

By permutation we assume $\alpha _{k}^{\theta -1}\neq \lambda $. We will
show for $i\leq k-1$, $\xi _{i}=\pm \xi _{k}$. Indeed we can assume $i=1$.
Let $\eta \in \mathbb{S}^{n}$ satisfy $\left\langle \xi _{1},\eta
\right\rangle =0$, here $\left\langle \cdot ,\cdot \right\rangle $ denotes
the standard inner product in $\mathbb{R}^{n+1}$. We consider%
\begin{eqnarray}
x &=&x\left( t\right)  \label{eq3.10} \\
&=&-\alpha _{1}\left( \cos t\cdot \xi _{1}+\sin t\cdot \eta \right) -\alpha
_{2}\xi _{2}-\cdots -\alpha _{k-1}\xi _{k-1},  \notag
\end{eqnarray}%
then%
\begin{equation}
x\left( 0\right) =-\alpha _{1}\xi _{2}-\cdots -\alpha _{k-1}\xi
_{k-1}=\alpha _{k}\xi _{k}\neq 0.  \label{eq3.11}
\end{equation}%
Hence $x\neq 0$ for $t$ near $0$. We have%
\begin{equation}
\alpha _{1}\left( \cos t\cdot \xi _{1}+\sin t\cdot \eta \right) +\alpha
_{2}\xi _{2}+\cdots +\alpha _{k-1}\xi _{k-1}+\left\vert x\right\vert \frac{x%
}{\left\vert x\right\vert }=0.  \label{eq3.12}
\end{equation}%
Note that%
\begin{equation*}
\alpha _{1}+\cdots +\alpha _{k-1}+\left\vert x\right\vert =1-\alpha
_{k}+\left\vert x\right\vert ,
\end{equation*}%
hence%
\begin{equation}
\left( \frac{\alpha _{1}}{1-\alpha _{k}+\left\vert x\right\vert },\cdots ,%
\frac{\alpha _{k-1}}{1-\alpha _{k}+\left\vert x\right\vert },\frac{%
\left\vert x\right\vert }{1-\alpha _{k}+\left\vert x\right\vert },0,\cdots
,0\right) \in W_{N}.  \label{eq3.13}
\end{equation}%
It follows that%
\begin{equation*}
\lambda \leq \frac{\alpha _{1}^{\theta }+\cdots +\alpha _{k-1}^{\theta
}+\left\vert x\right\vert ^{\theta }}{\left( 1-\alpha _{k}+\left\vert
x\right\vert \right) ^{\theta }}=\frac{\lambda -\alpha _{k}^{\theta
}+\left\vert x\right\vert ^{\theta }}{\left( 1-\alpha _{k}+\left\vert
x\right\vert \right) ^{\theta }}.
\end{equation*}%
In particular,%
\begin{equation*}
0=\left. \frac{d}{dt}\right\vert _{t=0}\frac{\lambda -\alpha _{k}^{\theta
}+\left\vert x\right\vert ^{\theta }}{\left( 1-\alpha _{k}+\left\vert
x\right\vert \right) ^{\theta }}=\theta \alpha _{1}\left( \lambda -\alpha
_{k}^{\theta -1}\right) \left\langle \xi _{k},\eta \right\rangle .
\end{equation*}%
We deduce $\left\langle \xi _{k},\eta \right\rangle =0$, using $\alpha
_{1}>0 $ and $\alpha _{k}^{\theta -1}\neq \lambda $. Hence $\xi _{1}=\pm \xi
_{k}$.

Since $k\geq 3$, we know for some $1\leq i<j\leq k$, $\xi _{i}=\xi _{j}$.
This gives us a contradiction with our previous observation.
\end{proof}

Let%
\begin{eqnarray}
W_{\infty } &=&\left\{ \left( \alpha _{1},\alpha _{2},\cdots \right) :\alpha
_{i}\geq 0\text{ for }1\leq i<\infty \text{, }\sum_{i=1}^{\infty }\alpha
_{i}=1,\right.  \label{eq3.14} \\
&&\left. \exists \xi _{1},\xi _{2},\cdots \in \mathbb{S}^{n}\text{ s.t. }%
\sum_{i=1}^{\infty }\alpha _{i}\xi _{i}=0\right\} .  \notag
\end{eqnarray}

\begin{lemma}
\label{lem3.2}For $0<\theta <1$ and $\left( \alpha _{1},\alpha _{2},\cdots
\right) \in W_{\infty }$, we have 
\begin{equation}
\sum_{i=1}^{\infty }\alpha _{i}^{\theta }\geq 2^{1-\theta }.  \label{eq3.15}
\end{equation}%
Equality holds if and only if $\left( \alpha _{1},\alpha _{2},\cdots \right)
=\left( \frac{1}{2},\frac{1}{2},0,\cdots \right) $ after permutation.
\end{lemma}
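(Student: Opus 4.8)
The plan is to prove Lemma~\ref{lem3.2} directly, essentially bypassing the variational argument used for Lemma~\ref{lem3.1}: the only ingredients needed are a size bound on the weights and a one-variable convexity estimate. (An alternative would be to truncate: keep the first $N$ terms, append one extra point $-x_N/|x_N|$, where $x_N=\sum_{i=1}^N\alpha_i\xi_i$, with weight $|x_N|$, so as to restore the vanishing constraint, apply Lemma~\ref{lem3.1} to the resulting element of $W_{N+1}$, and then let $N\to\infty$ using $x_N\to 0$ and $\sum_{i=1}^N\alpha_i\to 1$; but the direct route below delivers the equality case with no extra effort.)

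First I would record that every weight vector in $W_\infty$ has all entries at most $\frac12$. Indeed, if $(\alpha_1,\alpha_2,\dots)\in W_\infty$ with witnesses $\xi_i\in\mathbb{S}^n$ and $\sum_i\alpha_i\xi_i=0$, the series converges absolutely (because $\sum_i\alpha_i=1$), so for any fixed index $j$ we may write $\alpha_j\xi_j=-\sum_{i\neq j}\alpha_i\xi_i$, and therefore $\alpha_j=|\alpha_j\xi_j|\le\sum_{i\neq j}\alpha_i|\xi_i|=\sum_{i\neq j}\alpha_i=1-\alpha_j$, which forces $\alpha_j\le\frac12$.

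Second, since $0<\theta<1$ the function $h(t)=t^\theta$ is strictly concave on $[0,\infty)$, so on the interval $[0,\frac12]$ its graph lies above the chord joining $(0,0)$ and $(\frac12,2^{-\theta})$, namely the line $t\mapsto 2^{1-\theta}t$. Hence $t^\theta\ge 2^{1-\theta}t$ for every $t\in[0,\frac12]$, with equality only at $t=0$ and $t=\frac12$. Applying this with $t=\alpha_i$ — which is legitimate by the previous step — and summing over $i$ yields $\sum_i\alpha_i^\theta\ge 2^{1-\theta}\sum_i\alpha_i=2^{1-\theta}$, which is (\ref{eq3.15}).

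Finally, for the equality statement: if $\sum_i\alpha_i^\theta=2^{1-\theta}$ then $\alpha_i^\theta=2^{1-\theta}\alpha_i$ for every $i$, so each $\alpha_i\in\{0,\frac12\}$; combined with $\sum_i\alpha_i=1$ this forces exactly two of the $\alpha_i$ to equal $\frac12$ and the rest to vanish, i.e.\ $(\alpha_i)$ is a permutation of $(\frac12,\frac12,0,\dots)$. Conversely this vector does lie in $W_\infty$ (realize $\frac12\xi_1+\frac12\xi_2=0$ with $\xi_2=-\xi_1$) and its $\theta$-power sum is $2\cdot 2^{-\theta}=2^{1-\theta}$. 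I do not anticipate a genuine obstacle here; the two things to be careful about are the absolute-convergence bookkeeping for the infinite series and the observation that the chord inequality is used in the direction that genuinely requires $\alpha_i\le\frac12$ (for $t>\frac12$ one has $t^\theta<2^{1-\theta}t$), so membership in $W_\infty$ enters in an essential way.
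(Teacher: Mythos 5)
Your proof is correct and takes a genuinely different --- and notably simpler --- route from the paper's. The paper reduces Lemma~\ref{lem3.2} to the finite case Lemma~\ref{lem3.1} by truncating the tail, repackaging it into a single point, applying Lemma~\ref{lem3.1} in $W_{N+1}$, and then letting $N\to\infty$; Lemma~\ref{lem3.1} itself is established via a compactness-and-perturbation argument (minimize over the compact set $W_N$, rotate one support point on a small circle, and deduce that the support must be an antipodal pair). You instead extract directly from the constraint $\sum_i\alpha_i\xi_i=0$ the uniform bound $\alpha_j\leq\tfrac12$ (just the triangle inequality for the absolutely convergent series $\alpha_j\xi_j=-\sum_{i\neq j}\alpha_i\xi_i$), and then apply the one-variable chord estimate $t^\theta\geq 2^{1-\theta}t$ on $[0,\tfrac12]$ (equivalently $s^\theta\geq s$ on $[0,1]$), which after summing over $i$ gives (\ref{eq3.15}) in one stroke and delivers the equality case for free. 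This also gives a cleaner proof of Lemma~\ref{lem3.1} as an immediate by-product, and it is structurally the same idea the paper itself uses to prove Proposition~\ref{prop3.2}, where the bound $\nu_i\leq\frac{1}{n+2}$ from Bessel's inequality plays the role of your $\alpha_j\leq\tfrac12$. The one caveat worth recording is that your argument is specific to the first-moment constraint; for $m\geq 2$ the correct a priori weight bound is no longer $\tfrac12$, which is precisely why Propositions~\ref{prop3.2}--\ref{prop3.3} switch to the orthonormal-vector formulation.
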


\begin{proof}
Let $\left( \alpha _{1},\alpha _{2},\cdots \right) \in W_{\infty }$, by
permutation we can assume $\alpha _{1}>0$. For $N\geq 2$, we can find $\xi
\in \mathbb{S}^{n}$ and $0\leq \beta \leq \sum_{i=N}^{\infty }\alpha _{i}$
s.t.%
\begin{equation}
\sum_{i=N}^{\infty }\alpha _{i}\xi _{i}=\beta \xi .  \label{eq3.16}
\end{equation}%
Hence%
\begin{equation}
\alpha _{1}\xi +\cdots +\alpha _{N-1}\xi _{N-1}+\beta \xi =0.  \label{eq3.17}
\end{equation}%
In particular,%
\begin{equation}
\frac{\left( \alpha _{1},\cdots ,\alpha _{N-1},\beta \right) }{\alpha
_{1}+\cdots +\alpha _{N-1}+\beta }\in W_{N}.  \label{eq3.18}
\end{equation}%
It follows from Lemma \ref{lem3.1} that%
\begin{equation}
\alpha _{1}^{\theta }+\cdots +\alpha _{N-1}^{\theta }+\beta ^{\theta }\geq
2^{1-\theta }\left( \alpha _{1}+\cdots +\alpha _{N-1}+\beta \right) ^{\theta
}.  \label{eq3.19}
\end{equation}%
Hence%
\begin{equation}
\alpha _{1}^{\theta }+\cdots +\alpha _{N-1}^{\theta }+\left(
\sum_{i=N}^{\infty }\alpha _{i}\right) ^{\theta }\geq 2^{1-\theta }\left(
\alpha _{1}+\cdots +\alpha _{N-1}\right) ^{\theta }.  \label{eq3.20}
\end{equation}%
Letting $N\rightarrow \infty $, we get $\sum_{i=1}^{\infty }\alpha
_{i}^{\theta }\geq 2^{1-\theta }$.

Next assume equality holds in (\ref{eq3.15}), we claim except for finitely
many $i$'s, $\alpha _{i}=0$. Once this is proved, it follows from Lemma \ref%
{lem3.1} that after permutation $\left( \alpha _{1},\alpha _{2},\cdots
\right) =\left( \frac{1}{2},\frac{1}{2},0,\cdots \right) $.

If for infinitely many $i$'s, $\alpha _{i}>0$, by removing all $0$'s, we can
assume $\alpha _{i}>0$ for all $i$. There exists $\xi _{1},\xi _{2},\cdots
\in \mathbb{S}^{n}$ s.t.%
\begin{equation}
\sum_{i=1}^{\infty }\alpha _{i}\xi _{i}=0.  \label{eq3.21}
\end{equation}%
Using $0<\theta <1$, by the same argument as in the proof of Lemma \ref%
{lem3.1}, we know for $i\neq j$, $\xi _{i}\neq \xi _{j}$.

Since $\alpha _{i}\rightarrow 0$ as $i\rightarrow \infty $, we see for $k$
large enough, $\alpha _{k}^{\theta -1}>2^{1-\theta }$. Now we can proceed in
the same way as the proof of Lemma \ref{lem3.1} to show for $i\neq k$, $\xi
_{i}=\pm \xi _{k}$. Indeed, without loss of generality, we assume $i=1$. For 
$\eta \in \mathbb{S}^{n}$ with $\left\langle \xi _{1},\eta \right\rangle $,
we let%
\begin{equation}
x\left( t\right) =-\alpha _{1}\left( \cos t\cdot \xi _{1}+\sin t\cdot \eta
\right) -\sum_{j>1,j\neq k}\alpha _{j}\xi _{j}.  \label{eq3.22}
\end{equation}%
Then $x\left( 0\right) =\alpha _{k}\xi _{k}\neq 0$. It follows that for $t$
small,%
\begin{equation*}
\left( \frac{\alpha _{1}}{1-\alpha _{k}+\left\vert x\right\vert },\cdots ,%
\frac{\alpha _{k-1}}{1-\alpha _{k}+\left\vert x\right\vert },\frac{%
\left\vert x\right\vert }{1-\alpha _{k}+\left\vert x\right\vert },\frac{%
\alpha _{k+1}}{1-\alpha _{k}+\left\vert x\right\vert },\cdots \right) \in
W_{\infty }.
\end{equation*}%
We have%
\begin{equation}
2^{1-\theta }\leq \frac{2^{1-\theta }-\alpha _{k}^{\theta }+\left\vert
x\right\vert ^{\theta }}{\left( 1-\alpha _{k}+\left\vert x\right\vert
\right) ^{\theta }}.  \label{eq3.23}
\end{equation}%
Hence%
\begin{equation*}
0=\left. \frac{d}{dt}\right\vert _{t=0}\frac{2^{1-\theta }-\alpha
_{k}^{\theta }+\left\vert x\right\vert ^{\theta }}{\left( 1-\alpha
_{k}+\left\vert x\right\vert \right) ^{\theta }}=\theta \alpha _{1}\left(
2^{1-\theta }-\alpha _{k}^{\theta -1}\right) \left\langle \xi _{k},\eta
\right\rangle .
\end{equation*}%
As a consequence $\left\langle \xi _{k},\eta \right\rangle =0$. It follows
that $\xi _{1}=\pm \xi _{k}$.

The fact $\xi _{i}=\pm \xi _{k}$ for $i\neq k$ contradicts with the fact $%
\xi _{j}$'s are mutually different.
\end{proof}

Now we are ready to derive Proposition \ref{prop3.1}.

\begin{proof}[Proof of Proposition \protect\ref{prop3.1}]
Let $\nu \in \mathcal{M}_{1}^{c}\left( \mathbb{S}^{n}\right) $. Assume $\nu $
is supported on $\left\{ \xi _{i}\right\} $ with $\nu _{i}=\nu \left(
\left\{ \xi _{i}\right\} \right) $, then $\nu _{i}\geq 0$, $\sum_{i}\nu
_{i}=1$ and $\sum_{i}\nu _{i}\xi _{i}=0$. Whether the support of $\nu $ is
finite or infinite, it follows from Lemma \ref{lem3.1} and \ref{lem3.2} that 
$\sum_{i}\nu _{i}^{\theta }\geq 2^{1-\theta }$. Hence $\Theta \left(
1,\theta ,n\right) \geq 2^{1-\theta }$. On the other hand, for any $\xi \in 
\mathbb{S}^{n}$, $\frac{1}{2}\delta _{\xi }+\frac{1}{2}\delta _{-\xi }\in 
\mathcal{M}_{1}^{c}\left( \mathbb{S}^{n}\right) $. Hence $\Theta \left(
1,\theta ,n\right) \leq 2^{1-\theta }$. In total we get $\Theta \left(
1,\theta ,n\right) =2^{1-\theta }$.

If $\Theta \left( 1,\theta ,n\right) $ is achieved at $\nu \in \mathcal{M}%
_{1}^{c}\left( \mathbb{S}^{n}\right) $, then it follows from Lemma \ref%
{lem3.1} and \ref{lem3.2} that $\nu =\frac{1}{2}\delta _{\xi }+\frac{1}{2}%
\delta _{-\xi }$ for some $\xi \in \mathbb{S}^{n}$.
\end{proof}

\begin{proposition}
\label{prop3.2}For $\theta \in \left( 0,1\right) $ and $n\in \mathbb{N}$,%
\begin{equation}
\Theta \left( 2,\theta ,n\right) =\left( n+2\right) ^{1-\theta }.
\label{eq3.24}
\end{equation}%
Moreover $\Theta \left( 2,\theta ,n\right) $ is achieved at $\nu \in 
\mathcal{M}_{2}^{c}\left( \mathbb{S}^{n}\right) $ if and only if 
\begin{equation}
\nu =\frac{1}{n+2}\sum_{i=1}^{n+2}\delta _{\xi _{i}}  \label{eq3.25}
\end{equation}%
for $\xi _{1},\cdots ,\xi _{n+2}\in \mathbb{S}^{n}$ being the vortices of a
regular $\left( n+1\right) $-simplex embedded in the unit ball.
\end{proposition}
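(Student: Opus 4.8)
The plan is to establish the two bounds $\Theta(2,\theta,n)\le (n+2)^{1-\theta}$ and $\Theta(2,\theta,n)\ge (n+2)^{1-\theta}$, and to read off the equality case directly from the proof of the second one.

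\emph{The upper bound.} Here I would simply exhibit the candidate. Pick $\xi_1,\dots,\xi_{n+2}\in\mathbb{S}^n$ equal to the vertices of a regular $(n+1)$-simplex, so $\langle\xi_i,\xi_j\rangle=-\frac1{n+1}$ for $i\ne j$ (concretely, normalize the images of the standard basis of $\mathbb{R}^{n+2}$ under the orthogonal projection onto $\{\,x:\sum_j x_j=0\,\}$). A direct check gives $\sum_i\xi_i=0$ and $\sum_i\xi_i\xi_i^{\top}=\frac{n+2}{n+1}I_{n+1}$, so $\nu=\frac1{n+2}\sum_{i=1}^{n+2}\delta_{\xi_i}$ integrates $1$, every $x_j$, and every $x_jx_k$ to the same value as $\frac1{\lvert\mathbb{S}^n\rvert}\mu$ does; hence $\nu\in\mathcal{M}^c_2(\mathbb{S}^n)$ and $\sum_i\nu_i^{\theta}=(n+2)(n+2)^{-\theta}=(n+2)^{1-\theta}$, proving $\Theta(2,\theta,n)\le (n+2)^{1-\theta}$.

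\emph{The lower bound — the heart of the matter.} The decisive observation is that the degree-$\le2$ moment constraints say exactly that a one-dimensional lift of $\mathrm{supp}\,\nu$ is a weighted tight frame for $\mathbb{R}^{n+2}$. Let $\nu=\sum_i\nu_i\delta_{\xi_i}\in\mathcal{M}^c_2(\mathbb{S}^n)$; by definition of $\mathcal{M}^c_2(\mathbb{S}^n)$ (see (\ref{eq1.12})), testing against $1$, $x_j$ and $x_jx_k$ gives $\sum_i\nu_i=1$, $\sum_i\nu_i\xi_i=0$ and $\sum_i\nu_i\xi_i\xi_i^{\top}=\frac1{n+1}I_{n+1}$. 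Put
\[
u_i=\Bigl(\sqrt{\tfrac{n+1}{n+2}}\,\xi_i,\ \tfrac1{\sqrt{n+2}}\Bigr)\in\mathbb{R}^{n+2},
\]
so $\lvert u_i\rvert=1$, and a block computation from the three identities gives $\sum_i\nu_i u_iu_i^{\top}=\frac1{n+2}I_{n+2}$. Let $A:\mathbb{R}^{n+2}\to\ell^2$, $Ax=(\sqrt{\nu_i}\,\langle u_i,x\rangle)_i$; then $A^*A=\sum_i\nu_iu_iu_i^{\top}=\frac1{n+2}I_{n+2}$, so $B:=(n+2)AA^*$ is self-adjoint with $B^2=(n+2)^2A(A^*A)A^*=B$, i.e. $B$ is an orthogonal projection on $\ell^2$ of rank $\mathrm{tr}\,B=(n+2)\,\mathrm{tr}(A^*A)=n+2$, with diagonal entries $B_{ii}=(n+2)\lVert A^*e_i\rVert^2=(n+2)\nu_i\in[0,1]$ (the upper bound because $\lVert B\rVert\le1$). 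Since $t^{\theta}\ge t$ on $[0,1]$ when $0<\theta<1$,
\[
\sum_i\nu_i^{\theta}=(n+2)^{-\theta}\sum_i B_{ii}^{\theta}\ \ge\ (n+2)^{-\theta}\sum_i B_{ii}=(n+2)^{-\theta}(n+2)=(n+2)^{1-\theta}.
\]
Taking the infimum over $\nu$ gives $\Theta(2,\theta,n)\ge (n+2)^{1-\theta}$, hence (\ref{eq3.24}); note that no finiteness of $\mathrm{supp}\,\nu$ was needed.

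\emph{The equality case and the main obstacle.} Equality in $t^{\theta}\ge t$ forces $B_{ii}\in\{0,1\}$ for all $i$; on $\mathrm{supp}\,\nu$ one has $B_{ii}=(n+2)\nu_i>0$, hence $B_{ii}=1$ there, and then $\lVert Be_i\rVert=\lVert e_i\rVert$ forces $Be_i=e_i$. So the vectors $e_i$, $i\in\mathrm{supp}\,\nu$, are orthonormal and lie in the range of $B$, which is $(n+2)$-dimensional; therefore $\mathrm{supp}\,\nu$ has exactly $n+2$ points, all $\nu_i=\frac1{n+2}$, and $\langle u_i,u_j\rangle=\delta_{ij}$, which unwinds to $\langle\xi_i,\xi_j\rangle=-\frac1{n+1}$ for $i\ne j$, i.e. $\{\xi_i\}$ is a regular $(n+1)$-simplex; conversely such a $\nu$ attains the value by the first part. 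I expect the only genuinely nontrivial step to be guessing the lift $\xi\mapsto u$: once one sees that the \emph{second-order} moment condition is precisely what makes $\{u_i\}$ a tight frame for $\mathbb{R}^{n+2}$, so that $(n+2)\nu_i$ is literally the diagonal of a rank-$(n+2)$ projection, the remainder is the one-line inequality $t^{\theta}\ge t$. This also explains why the method does not reach $\Theta(m,\theta,n)$ for $m\ge3$ (no analogous lift — the open problem of this section), while for $m=1$ the second moment is unavailable, which is why the softer arguments of Lemmas \ref{lem3.1}--\ref{lem3.2} are required there.
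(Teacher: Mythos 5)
Your argument is correct and is, up to a change of perspective, the same as the paper's: your unit lifts $u_i=\bigl(\sqrt{\tfrac{n+1}{n+2}}\xi_i,\tfrac1{\sqrt{n+2}}\bigr)$ and the identity $A^*A=\tfrac1{n+2}I_{n+2}$ are exactly the ``row picture'' of the paper's Lemmas~\ref{lem3.3}--\ref{lem3.4}, which state the ``column picture'' that the $(n+2)$ vectors $u_0,\dots,u_{n+1}\in\ell^2$ built from the monomials $1,x_1,\dots,x_{n+1}$ are orthonormal, and your diagonal bound $B_{ii}=(n+2)\nu_i\le 1$ from the projection $B=(n+2)AA^*$ is the paper's Bessel inequality $\sum_{j=0}^{n+1}\langle e_i,u_j\rangle^2\le 1$. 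The remainder (the pointwise bound $t^\theta\ge t$ on $[0,1]$, the rigidity forcing exactly $n+2$ equal atoms with $\langle\xi_i,\xi_j\rangle=-\tfrac1{n+1}$, hence a regular simplex, and the explicit extremizer for the upper bound) coincides step for step; the one modest streamlining your packaging buys is that the finite- and countably-infinite-support cases are handled in a single stroke, whereas the paper treats them in parallel.
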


Before starting the proof of Proposition \ref{prop3.2}, we make some
observations about $\mathcal{M}_{2}^{c}\left( \mathbb{S}^{n}\right) $.
Recall (see \cite[chapter IV]{SW})%
\begin{equation}
\dim \left( \left. \overset{\circ }{\mathcal{P}}_{2}\right\vert _{\mathbb{S}%
^{n}}\right) =\frac{n^{2}+3n}{2}+n+1.  \label{eq3.26}
\end{equation}%
Moreover $\left. \overset{\circ }{\mathcal{P}}_{2}\right\vert _{\mathbb{S}%
^{n}}$ has a base%
\begin{equation}
x_{1},\cdots ,x_{n+1},x_{1}^{2}-\frac{\left\vert x\right\vert ^{2}}{n+1}%
,\cdots ,x_{n}^{2}-\frac{\left\vert x\right\vert ^{2}}{n+1},x_{i}x_{j}\text{
for }1\leq i<j\leq n+1.  \label{eq3.27}
\end{equation}

Let $\nu $ be a probability measure on $\mathbb{S}^{n}$. Assume $\nu $ is
supported on $N$ points $\left\{ \xi _{i}\right\} _{i=1}^{N}\subset \mathbb{S%
}^{n}$. Denote $\nu _{i}=\nu \left( \left\{ \xi _{i}\right\} \right) $. We
define $n+2$ vectors in $\mathbb{R}^{N}$ as%
\begin{equation}
u_{0}=\left[ 
\begin{array}{c}
\sqrt{\nu _{1}} \\ 
\sqrt{\nu _{2}} \\ 
\vdots \\ 
\sqrt{\nu _{N}}%
\end{array}%
\right] ,u_{j}=\left[ 
\begin{array}{c}
\sqrt{\left( n+1\right) \nu _{1}}\xi _{1,j} \\ 
\sqrt{\left( n+1\right) \nu _{2}}\xi _{2,j} \\ 
\vdots \\ 
\sqrt{\left( n+1\right) \nu _{N}}\xi _{N,j}%
\end{array}%
\right] \text{ for }1\leq j\leq n+1.  \label{eq3.28}
\end{equation}%
Here $\xi _{i,j}$ is the $j$th coordinate of $\xi _{i}$ as a vector in $%
\mathbb{R}^{n+1}$.

\begin{lemma}
\label{lem3.3}Let $\nu $ be a probability measure on $\mathbb{S}^{n}$
supported on $N$ points. Then%
\begin{equation}
\nu \in \mathcal{M}_{2}^{c}\left( \mathbb{S}^{n}\right) \Leftrightarrow
u_{0},u_{1},\cdots ,u_{n+1}\text{ is orthonormal in }\mathbb{R}^{N}.
\label{eq3.29}
\end{equation}%
Here $u_{0},u_{1},\cdots ,u_{n+1}$ is defined in (\ref{eq3.28}) and $\mathbb{%
R}^{N}$ is equipped with standard inner product.
\end{lemma}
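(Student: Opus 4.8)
The plan is to compute directly the Gram matrix of the $n+2$ vectors $u_0,u_1,\dots,u_{n+1}$ and to recognize its entries as low-order moments of $\nu$; orthonormality then reads off as a system of moment equations that turns out to be exactly the defining condition for $\mathcal{M}_2^c(\mathbb{S}^n)$.

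First I would record, writing $\xi_i=(\xi_{i,1},\dots,\xi_{i,n+1})\in\mathbb{S}^n$ and $\nu_i=\nu(\{\xi_i\})$, the three types of inner products that occur. One has $\langle u_0,u_0\rangle=\sum_{i=1}^N\nu_i=1$ automatically, since $\nu$ is a probability measure; $\langle u_0,u_j\rangle=\sqrt{n+1}\sum_{i=1}^N\nu_i\xi_{i,j}=\sqrt{n+1}\int_{\mathbb{S}^n}x_j\,d\nu$ for $1\le j\le n+1$; and $\langle u_j,u_k\rangle=(n+1)\sum_{i=1}^N\nu_i\xi_{i,j}\xi_{i,k}=(n+1)\int_{\mathbb{S}^n}x_jx_k\,d\nu$ for $1\le j,k\le n+1$. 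Hence $u_0,u_1,\dots,u_{n+1}$ is orthonormal if and only if $\int_{\mathbb{S}^n}x_j\,d\nu=0$ for every $j$ and $\int_{\mathbb{S}^n}x_jx_k\,d\nu=\tfrac{1}{n+1}\delta_{jk}$ for all $j,k$, the entry $\langle u_0,u_0\rangle=1$ imposing nothing.

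Next I would translate the membership $\nu\in\mathcal{M}_2^c(\mathbb{S}^n)$ into the same list of equations. Since $\int_{\mathbb{S}^n}1\,d\nu=1=\tfrac{1}{|\mathbb{S}^n|}\int_{\mathbb{S}^n}1\,d\mu$ holds for free, $\nu\in\mathcal{M}_2^c(\mathbb{S}^n)$ is equivalent to $\int_{\mathbb{S}^n}f\,d\nu=0$ for each $f$ in the basis (\ref{eq3.27}) of $\left.\overset{\circ}{\mathcal{P}}_2\right|_{\mathbb{S}^n}$. Unwinding that basis: the elements $x_1,\dots,x_{n+1}$ give $\int_{\mathbb{S}^n}x_j\,d\nu=0$; the elements $x_ix_j$ with $i<j$ give $\int_{\mathbb{S}^n}x_ix_j\,d\nu=0$; and, using $|x|^2\equiv1$ on $\mathbb{S}^n$, the elements $x_j^2-\tfrac{|x|^2}{n+1}$ for $j=1,\dots,n$ give $\int_{\mathbb{S}^n}x_j^2\,d\nu=\tfrac{1}{n+1}$. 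Finally $\sum_{j=1}^{n+1}x_j^2\equiv1$ on $\mathbb{S}^n$ forces $\int_{\mathbb{S}^n}x_{n+1}^2\,d\nu=\tfrac{1}{n+1}$ as well. Thus $\nu\in\mathcal{M}_2^c(\mathbb{S}^n)$ precisely when $\int_{\mathbb{S}^n}x_j\,d\nu=0$ for all $j$ and $\int_{\mathbb{S}^n}x_jx_k\,d\nu=\tfrac{1}{n+1}\delta_{jk}$ for all $j,k$, which is exactly the orthonormality criterion obtained above; this establishes the equivalence.

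There is no genuine obstacle here; the argument is bookkeeping. The only points that need a little care are the reduction from all of $\mathcal{P}_2$ to the finite basis (\ref{eq3.27}) together with the automatic constant moment, and the two uses of $|x|^2=1$ on the sphere — one to simplify $x_j^2-|x|^2/(n+1)$, the other to recover the missing $(n+1)$-st diagonal moment from the first $n$. It is also worth remarking that the normalizing factor $n+1$ built into the definition of $u_j$ is precisely what makes $\langle u_j,u_j\rangle=1$ coincide with $\int_{\mathbb{S}^n}x_j^2\,d\nu=1/(n+1)$.
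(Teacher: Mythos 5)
Your computation is correct and is exactly the bookkeeping that the paper compresses into its one-line proof ``This follows from (\ref{eq3.27})'': you identify the Gram-matrix entries of $u_0,\dots,u_{n+1}$ with the moments $\int x_j\,d\nu$ and $\int x_jx_k\,d\nu$, and you check that the vanishing of $\int f\,d\nu$ on the basis (\ref{eq3.27}) (together with the free normalization $\int d\nu=1$ and $|x|^2\equiv 1$ on $\mathbb{S}^n$) amounts to the same moment equations. Same approach, just written out.
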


\begin{proof}
This follows from (\ref{eq3.27}).
\end{proof}

Similarly, if $\nu $ is supported on countable infinitely many points $%
\left\{ \xi _{i}\right\} _{i=1}^{\infty }\subset \mathbb{S}^{n}$ with $\nu
_{i}=\nu \left( \left\{ \xi _{i}\right\} \right) $, we can define $n+2$
vectors in the Hilbert space $\ell ^{2}$ as%
\begin{equation}
\begin{array}{l}
u_{0}=\left( \sqrt{\nu _{1}},\sqrt{\nu _{2}},\cdots \right) , \\ 
u_{1}=\left( \sqrt{\left( n+1\right) \nu _{1}}\xi _{1,1},\sqrt{\left(
n+1\right) \nu _{2}}\xi _{2,1},\cdots \right) , \\ 
\cdots \\ 
u_{n+1}=\left( \sqrt{\left( n+1\right) \nu _{1}}\xi _{1,n+1},\sqrt{\left(
n+1\right) \nu _{2}}\xi _{2,n+1},\cdots \right) .%
\end{array}
\label{eq3.30}
\end{equation}%
Here%
\begin{equation}
\ell ^{2}=\left\{ \left( c_{1},c_{2},\cdots \right) :c_{i}\in \mathbb{R}%
,\sum_{i=1}^{\infty }c_{i}^{2}<\infty \right\}  \label{eq3.31}
\end{equation}%
and it is equipped with the standard inner product.

\begin{lemma}
\label{lem3.4}Let $\nu $ be a probability measure on $\mathbb{S}^{n}$
supported on countable infinitely many points. Then%
\begin{equation}
\nu \in \mathcal{M}_{2}^{c}\left( \mathbb{S}^{n}\right) \Leftrightarrow
u_{0},u_{1},\cdots ,u_{n+1}\text{ is orthonormal in }\ell ^{2}.
\label{eq3.32}
\end{equation}%
Here $u_{0},u_{1},\cdots ,u_{n+1}$ is defined in (\ref{eq3.30}).
\end{lemma}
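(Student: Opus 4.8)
The plan is to rerun the argument behind Lemma \ref{lem3.3}, the only genuinely new point being to make sure that passing from a finite to a countable index set does not spoil any convergence. First I would check that the vectors in (\ref{eq3.30}) really lie in $\ell ^{2}$: since $\left\vert \xi _{i,j}\right\vert \leq 1$, we have $\sum_{i}\nu _{i}=1<\infty $ for $u_{0}$, and $\sum_{i}\left( n+1\right) \nu _{i}\xi _{i,j}^{2}\leq \left( n+1\right) \sum_{i}\nu _{i}=n+1<\infty $ for $u_{j}$ with $1\leq j\leq n+1$. So $u_{0},u_{1},\cdots ,u_{n+1}$ are legitimate elements of $\ell ^{2}$ and it makes sense to ask whether they are orthonormal there.

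Next I would expand the inner products. Writing $\nu =\sum_{i}\nu _{i}\delta _{\xi _{i}}$, one gets $\left\langle u_{0},u_{0}\right\rangle =\sum_{i}\nu _{i}=1$ automatically, while
\begin{equation*}
\left\langle u_{0},u_{j}\right\rangle =\sqrt{n+1}\int_{\mathbb{S}^{n}}x_{j}\,d\nu ,\quad \left\langle u_{j},u_{k}\right\rangle =\left( n+1\right) \int_{\mathbb{S}^{n}}x_{j}x_{k}\,d\nu \ \left( j\neq k\right) ,\quad \left\langle u_{j},u_{j}\right\rangle =\left( n+1\right) \int_{\mathbb{S}^{n}}x_{j}^{2}\,d\nu .
\end{equation*}
All these series converge absolutely, since $\sum_{i}\nu _{i}\left\vert \xi _{i,j}\xi _{i,k}\right\vert \leq \sum_{i}\nu _{i}=1$; equivalently, any polynomial of degree at most two is bounded on $\mathbb{S}^{n}$, so $\int_{\mathbb{S}^{n}}f\,d\nu $ is an absolutely convergent sum for $f\in \mathcal{P}_{2}$, and the rearrangements are harmless. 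Hence orthonormality of $u_{0},\cdots ,u_{n+1}$ in $\ell ^{2}$ is precisely the system $\int_{\mathbb{S}^{n}}x_{j}\,d\nu =0$ ($1\leq j\leq n+1$), $\int_{\mathbb{S}^{n}}x_{j}x_{k}\,d\nu =0$ ($j\neq k$), and $\int_{\mathbb{S}^{n}}x_{j}^{2}\,d\nu =\frac{1}{n+1}$ ($1\leq j\leq n+1$).

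Finally I would match this system with the basis (\ref{eq3.27}). On $\mathbb{S}^{n}$ one has $\sum_{j=1}^{n+1}x_{j}^{2}=\left\vert x\right\vert ^{2}=1$, so integrating against $\nu $ gives $\sum_{j=1}^{n+1}\int_{\mathbb{S}^{n}}x_{j}^{2}\,d\nu =1$ for free; therefore the conditions $\int_{\mathbb{S}^{n}}x_{j}^{2}\,d\nu =\frac{1}{n+1}$ are equivalent to
\begin{equation*}
\int_{\mathbb{S}^{n}}\left( x_{j}^{2}-\frac{\left\vert x\right\vert ^{2}}{n+1}\right) d\nu =0,\quad 1\leq j\leq n,
\end{equation*}
the case $j=n+1$ being then automatic. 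Combining the three items, orthonormality of $u_{0},\cdots ,u_{n+1}$ is equivalent to $\int_{\mathbb{S}^{n}}f\,d\nu =0$ for every element $f$ of the basis (\ref{eq3.27}) of $\left. \overset{\circ }{\mathcal{P}}_{2}\right\vert _{\mathbb{S}^{n}}$, hence by linearity for every $f\in \overset{\circ }{\mathcal{P}}_{2}$, which is exactly the statement $\nu \in \mathcal{M}_{2}^{c}\left( \mathbb{S}^{n}\right) $. I do not expect any real obstacle here beyond this bookkeeping; the one step that needs a little care is the absolute convergence invoked above, which is precisely what allows the finite-dimensional reasoning of Lemma \ref{lem3.3} to carry over verbatim to $\ell ^{2}$.
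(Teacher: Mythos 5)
Your proposal is correct and takes essentially the same approach as the paper: the paper's own proof is the one-line remark that the lemma ``follows from (\ref{eq3.27}),'' and your write-up simply fills in the computation of the $\ell^{2}$ inner products, the absolute-convergence check, and the matching with the basis (\ref{eq3.27}), which is exactly what that remark is pointing at.
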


Again this lemma follows from (\ref{eq3.27}).

\begin{proof}[Proof of Proposition \protect\ref{prop3.2}]
Let $\nu \in \mathcal{M}_{2}^{c}\left( \mathbb{S}^{n}\right) $. If $\nu $ is
supported on finitely many points, say $\left\{ \xi _{i}\right\}
_{i=1}^{N}\subset \mathbb{S}^{n}$, then by Lemma \ref{lem3.3} we know $%
u_{0},\cdots ,u_{n+1}$ is orthonormal in $\mathbb{R}^{N}$. Let $e_{1},\cdots
,e_{N}$ be the standard base of $\mathbb{R}^{N}$, then for $1\leq i\leq N$,%
\begin{equation}
\sum_{j=0}^{n+1}\left\langle e_{i},u_{j}\right\rangle ^{2}=\left( n+2\right)
\nu _{i}.  \label{eq3.33}
\end{equation}%
Here $\left\langle \cdot ,\cdot \right\rangle $ is the standard inner
product on $\mathbb{R}^{N}$. It follows from Parseval's relation that%
\begin{equation}
0<\sum_{j=0}^{n+1}\left\langle e_{i},u_{j}\right\rangle ^{2}\leq 1.
\label{eq3.34}
\end{equation}%
Hence using $0<\theta <1$, we have%
\begin{eqnarray*}
\sum_{i=1}^{N}\nu _{i}^{\theta } &=&\frac{1}{\left( n+2\right) ^{\theta }}%
\sum_{i=1}^{N}\left( \sum_{j=0}^{n+1}\left\langle e_{i},u_{j}\right\rangle
^{2}\right) ^{\theta } \\
&\geq &\frac{1}{\left( n+2\right) ^{\theta }}\sum_{i=1}^{N}\sum_{j=0}^{n+1}%
\left\langle e_{i},u_{j}\right\rangle ^{2} \\
&=&\frac{1}{\left( n+2\right) ^{\theta }}\sum_{j=0}^{n+1}\sum_{i=1}^{N}\left%
\langle e_{i},u_{j}\right\rangle ^{2} \\
&=&\frac{1}{\left( n+2\right) ^{\theta }}\sum_{j=0}^{n+1}1 \\
&=&\left( n+2\right) ^{1-\theta }.
\end{eqnarray*}%
If $\sum_{i=1}^{N}\nu _{i}^{\theta }=\left( n+2\right) ^{1-\theta }$, then
for $1\leq i\leq N$,%
\begin{equation*}
\sum_{j=0}^{n+1}\left\langle e_{i},u_{j}\right\rangle ^{2}=1.
\end{equation*}%
In view of (\ref{eq3.33}) we know $\nu _{i}=\frac{1}{n+2}$. Using $%
\sum_{i=1}^{N}\nu _{i}=1$, we see $N=n+2$. Moreover%
\begin{equation*}
e_{i}=\sum_{j=1}^{n+1}\left\langle e_{i},u_{j}\right\rangle u_{j}\in 
\limfunc{span}\left\{ u_{0},u_{1},\cdots ,u_{n+1}\right\} .
\end{equation*}%
It follows that $u_{0},u_{1},\cdots ,u_{n+1}$ is an orthonormal base for $%
\mathbb{R}^{n+2}$. This implies the matrix%
\begin{equation*}
A=\left[ u_{0},u_{1},\cdots ,u_{n+1}\right]
\end{equation*}%
is orthonormal. Since%
\begin{equation*}
A=\left[ 
\begin{array}{cc}
\frac{1}{\sqrt{n+2}} & \sqrt{\frac{n+1}{n+2}}\xi _{1}^{T} \\ 
\frac{1}{\sqrt{n+2}} & \sqrt{\frac{n+1}{n+2}}\xi _{2}^{T} \\ 
\vdots & \vdots \\ 
\frac{1}{\sqrt{n+2}} & \sqrt{\frac{n+1}{n+2}}\xi _{n+2}^{T}%
\end{array}%
\right] ,
\end{equation*}%
we know for $1\leq i<j\leq n+2$, $\left\Vert \xi _{i}-\xi _{j}\right\Vert =%
\sqrt{\frac{2\left( n+2\right) }{n+1}}$. Hence $\nu =\frac{1}{n+2}%
\sum_{i=1}^{n+2}\delta _{\xi _{i}}$ and $\xi _{1},\cdots ,\xi _{n+2}\in 
\mathbb{S}^{n}$ are the vortices of a regular $\left( n+1\right) $-simplex
embedded in the unit ball.

If $\nu $ is supported on countable infinitely many points, say $\left\{ \xi
_{i}\right\} _{i=1}^{\infty }\subset \mathbb{S}^{n}$, then by Lemma \ref%
{lem3.4} we know $u_{0},\cdots ,u_{n+1}$ is orthonormal in $\mathbb{\ell }%
^{2}$. Let%
\begin{equation}
e_{i}=\left( 0,\cdots ,0,1,0,\cdots \right) ,\text{ }1\text{ lies at }i\text{%
th position.}  \label{eq3.35}
\end{equation}%
For $x,y\in \ell ^{2}$, let%
\begin{equation}
\left\langle x,y\right\rangle =\dsum\limits_{i=1}^{\infty }x_{i}y_{i}
\label{eq3.36}
\end{equation}%
be the standard inner product. Then for all $i$,%
\begin{equation}
\sum_{j=0}^{n+1}\left\langle e_{i},u_{j}\right\rangle ^{2}=\left( n+2\right)
\nu _{i}.  \label{eq3.37}
\end{equation}%
By Parseval's relation in $\mathbb{\ell }^{2}$, we know%
\begin{equation}
0<\sum_{j=0}^{n+1}\left\langle e_{i},u_{j}\right\rangle ^{2}\leq 1.
\label{eq3.38}
\end{equation}%
As in the finite support points case, we have%
\begin{eqnarray*}
\sum_{i=1}^{\infty }\nu _{i}^{\theta } &=&\frac{1}{\left( n+2\right)
^{\theta }}\sum_{i=1}^{\infty }\left( \sum_{j=0}^{n+1}\left\langle
e_{i},u_{j}\right\rangle ^{2}\right) ^{\theta } \\
&\geq &\frac{1}{\left( n+2\right) ^{\theta }}\sum_{i=1}^{\infty
}\sum_{j=0}^{n+1}\left\langle e_{i},u_{j}\right\rangle ^{2} \\
&=&\frac{1}{\left( n+2\right) ^{\theta }}\sum_{j=0}^{n+1}\sum_{i=1}^{\infty
}\left\langle e_{i},u_{j}\right\rangle ^{2} \\
&=&\left( n+2\right) ^{1-\theta }.
\end{eqnarray*}%
In this case, we always have%
\begin{equation}
\sum_{i=1}^{\infty }\nu _{i}^{\theta }>\left( n+2\right) ^{1-\theta }.
\label{eq3.39}
\end{equation}%
Indeed if $\sum_{i=1}^{\infty }\nu _{i}^{\theta }=\left( n+2\right)
^{1-\theta }$, then for all $i$,%
\begin{equation}
\sum_{j=0}^{n+1}\left\langle e_{i},u_{j}\right\rangle ^{2}=1.  \label{eq3.40}
\end{equation}%
By (\ref{eq3.37}) we see $\nu _{i}=\frac{1}{n+2}$. This contradicts with the
fact $\sum_{i=1}^{\infty }\nu _{i}=1$.

Summing up we see $\Theta \left( 2,\theta ,n\right) \geq \left( n+2\right)
^{1-\theta }$. On the other hand, let $\xi _{1},\cdots ,\xi _{n+2}\in 
\mathbb{S}^{n}$ be the vortices of a regular $\left( n+1\right) $-simplex
embedded in the unit ball, then it follows from \cite[proof of Lemma 3.1]{Ha}
that $\frac{1}{n+2}\delta _{\xi _{1}}+\cdots +\frac{1}{n+2}\delta _{\xi
_{n+2}}\in \mathcal{M}_{2}^{c}\left( \mathbb{S}^{n}\right) $. Hence $\Theta
\left( 2,\theta ,n\right) \leq \left( n+2\right) ^{1-\theta }$. It follows
that $\Theta \left( 2,\theta ,n\right) =\left( n+2\right) ^{1-\theta }$.
\end{proof}

It is interesting that similar idea as in the above proof can help us
finding $\Theta \left( m,\theta ,1\right) $ for all $m\in \mathbb{N}$.

\begin{proposition}
\label{prop3.3}For $m\in \mathbb{N}$ and $\theta \in \left( 0,1\right) $,%
\begin{equation}
\Theta \left( m,\theta ,1\right) =\left( m+1\right) ^{1-\theta }.
\label{eq3.41}
\end{equation}%
Moreover $\Theta \left( m,\theta ,1\right) $ is achieved at $\nu \in 
\mathcal{M}_{m}^{c}\left( \mathbb{S}^{1}\right) $ if and only if%
\begin{equation}
\nu =\frac{1}{m+1}\sum_{j=0}^{m}\delta _{e^{i\left( \alpha +\frac{2j\pi }{m+1%
}\right) }}  \label{eq3.42}
\end{equation}%
for some $\alpha \in \mathbb{R}$.
\end{proposition}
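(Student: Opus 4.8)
The plan is to follow the pattern of the proof of Proposition~\ref{prop3.2}, replacing the coordinate functions $x_1,\dots,x_{n+1}$ by the complex exponentials $1,e^{i\phi},e^{2i\phi},\dots,e^{im\phi}$, where $\phi$ denotes the angular coordinate on $\mathbb{S}^1\subset\mathbb{C}$. The starting point is the observation that the restriction of $\overset{\circ}{\mathcal{P}}_m$ to $\mathbb{S}^1$ is exactly the span of $\cos k\phi$ and $\sin k\phi$ for $1\le k\le m$ (equivalently of $e^{ik\phi}$ for $1\le|k|\le m$); hence a probability measure $\nu$ carried by points $\xi_j=e^{i\phi_j}$ lies in $\mathcal{M}_m^c(\mathbb{S}^1)$ if and only if $\int_{\mathbb{S}^1}e^{ik\phi}\,d\nu=0$ for every $k$ with $1\le k\le m$, i.e. the first $m$ Fourier coefficients of $\nu$ vanish.

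For the lower bound, given $\nu\in\mathcal{M}_m^c(\mathbb{S}^1)$ supported on $\{\xi_i\}$ with weights $\nu_i=\nu(\{\xi_i\})$, I would introduce the $m+1$ vectors $z_k=(\sqrt{\nu_i}\,e^{ik\phi_i})_i$, $k=0,1,\dots,m$, in $\mathbb{C}^N$ (or, in the infinite support case, in the complex Hilbert space $\ell^2$, which is legitimate since $\sum_i\nu_i=1<\infty$). With the Hermitian inner product, $\langle z_j,z_k\rangle=\sum_i\nu_i e^{i(j-k)\phi_i}=\int_{\mathbb{S}^1}e^{i(j-k)\phi}\,d\nu$; since $|j-k|\le m$, this equals $\delta_{jk}$ exactly when $\nu\in\mathcal{M}_m^c(\mathbb{S}^1)$, so (in the spirit of Lemmas~\ref{lem3.3}--\ref{lem3.4}) membership in $\mathcal{M}_m^c(\mathbb{S}^1)$ is equivalent to $z_0,\dots,z_m$ being orthonormal. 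Writing $e_i$ for the standard basis vectors, one has $\sum_{k=0}^m|\langle e_i,z_k\rangle|^2=(m+1)\nu_i$, while Bessel's inequality gives $0<\sum_{k=0}^m|\langle e_i,z_k\rangle|^2\le\|e_i\|^2=1$. Using $t^\theta\ge t$ for $t\in[0,1]$ and $0<\theta<1$, and then interchanging the order of summation,
\[
\sum_i\nu_i^\theta=\frac{1}{(m+1)^\theta}\sum_i\Big(\sum_{k=0}^m|\langle e_i,z_k\rangle|^2\Big)^\theta\ \ge\ \frac{1}{(m+1)^\theta}\sum_{k=0}^m\sum_i|\langle e_i,z_k\rangle|^2=\frac{1}{(m+1)^\theta}\sum_{k=0}^m\|z_k\|^2=(m+1)^{1-\theta},
\]
so $\Theta(m,\theta,1)\ge(m+1)^{1-\theta}$. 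For the matching upper bound I would check directly that $\nu_\alpha=\frac{1}{m+1}\sum_{j=0}^m\delta_{e^{i(\alpha+2\pi j/(m+1))}}$ lies in $\mathcal{M}_m^c(\mathbb{S}^1)$: for $1\le k\le m$, the geometric sum $\sum_{j=0}^m e^{ik(\alpha+2\pi j/(m+1))}=e^{ik\alpha}(e^{2\pi ik}-1)/(e^{2\pi ik/(m+1)}-1)$ vanishes because $k$ is not a multiple of $m+1$. Since $\nu_\alpha$ gives $\sum_i\nu_i^\theta=(m+1)^{1-\theta}$, equality holds and $\Theta(m,\theta,1)=(m+1)^{1-\theta}$.

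For the characterization of minimizers, suppose $\nu$ attains the value $(m+1)^{1-\theta}$. Tracing equality through the chain above forces $\sum_{k=0}^m|\langle e_i,z_k\rangle|^2\in\{0,1\}$ for each $i$; since $\nu_i>0$ on the support, this gives $\sum_{k=0}^m|\langle e_i,z_k\rangle|^2=1$, i.e. $\nu_i=\frac{1}{m+1}$ for every support point, whence $\sum_i\nu_i=1$ forces exactly $m+1$ support points (in particular, infinite support is impossible). Now $z_0,\dots,z_m$ is an orthonormal basis of $\mathbb{C}^{m+1}$, so the square matrix with columns $z_0,\dots,z_m$, whose $(j,k)$ entry is $\frac{1}{\sqrt{m+1}}e^{ik\phi_j}$, is unitary; equating its rows yields $\sum_{k=0}^m e^{ik(\phi_j-\phi_{j'})}=0$ for $j\ne j'$. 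A sum $1+w+\dots+w^m$ with $w=e^{i(\phi_j-\phi_{j'})}$ vanishes only when $w^{m+1}=1$ and $w\ne1$, i.e. $\phi_j-\phi_{j'}\in\frac{2\pi}{m+1}\mathbb{Z}\setminus2\pi\mathbb{Z}$. Setting $\alpha=\phi_0$, every $\phi_j$ has the form $\alpha+\frac{2\pi}{m+1}n_j$ with $n_j\in\mathbb{Z}$, and distinctness of the points forces the residues $n_j\bmod(m+1)$ to be pairwise distinct; since there are $m+1$ of them they exhaust $\{0,1,\dots,m\}$, which is exactly (\ref{eq3.42}). Conversely, each such measure lies in $\mathcal{M}_m^c(\mathbb{S}^1)$ and realizes the minimum, as already noted.

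The step I expect to require the most care is the last one: passing from unitarity of the Vandermonde-type matrix $\big[e^{ik\phi_j}\big]_{j,k}$ to the conclusion that the support points are equally spaced. This rests on the elementary but crucial fact that $1+w+\dots+w^m$ vanishes precisely when $w$ is a nontrivial $(m+1)$-st root of unity. Everything else is a transcription to $\mathbb{S}^1$ of the Parseval/Bessel argument already used for Propositions~\ref{prop3.1} and~\ref{prop3.2}.
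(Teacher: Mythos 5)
Your argument is correct and follows essentially the same route as the paper: membership in $\mathcal{M}_m^c(\mathbb{S}^1)$ is recast as orthonormality of the vectors $(\sqrt{\nu_i}\,\xi_i^k)_i$ for $k=0,\dots,m$, Bessel's inequality plus $t^\theta\ge t$ on $[0,1]$ gives the lower bound $(m+1)^{1-\theta}$, and the equality analysis reduces to unitarity of the resulting $(m+1)\times(m+1)$ matrix and the vanishing of the geometric sum $1+w+\dots+w^m$ forcing the $\xi_j$ to be a rotated copy of the $(m+1)$st roots of unity. The only cosmetic differences are that you spell out why orthonormality is equivalent to the moment conditions (the paper treats this as obvious) and that you work with angular coordinates and orthogonality of all row pairs of the unitary matrix, whereas the paper fixes one reference row; the substance is identical.
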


Let $\nu $ be a probability measure on $\mathbb{S}^{1}$. Assume $\nu $ is
supported on $N$ points $\left\{ \xi _{j}\right\} _{j=1}^{N}\subset \mathbb{S%
}^{1}$ with $\nu _{j}=\nu \left( \left\{ \xi _{j}\right\} \right) $. Let%
\begin{equation}
u_{0}=\left[ 
\begin{array}{c}
\sqrt{\nu _{1}} \\ 
\sqrt{\nu _{2}} \\ 
\vdots \\ 
\sqrt{\nu _{N}}%
\end{array}%
\right] ,u_{1}=\left[ 
\begin{array}{c}
\sqrt{\nu _{1}}\xi _{1} \\ 
\sqrt{\nu _{2}}\xi _{2} \\ 
\vdots \\ 
\sqrt{\nu _{N}}\xi _{N}%
\end{array}%
\right] ,\cdots ,u_{m}=\left[ 
\begin{array}{c}
\sqrt{\nu _{1}}\xi _{1}^{m} \\ 
\sqrt{\nu _{2}}\xi _{2}^{m} \\ 
\vdots \\ 
\sqrt{\nu _{N}}\xi _{N}^{m}%
\end{array}%
\right] ,  \label{eq3.43}
\end{equation}%
then it is clear that%
\begin{equation}
\nu \in \mathcal{M}_{m}^{c}\left( \mathbb{S}^{1}\right) \Leftrightarrow
u_{0},\cdots ,u_{m}\text{ is orthonormal in }\mathbb{C}^{N}.  \label{eq3.44}
\end{equation}%
Here $\mathbb{C}^{N}$ is equipped with the standard Hermite inner product
i.e. $\left\langle x,y\right\rangle =x^{T}\overline{y}$ for $x,y\in \mathbb{C%
}^{N}$.

Assume $\nu $ is supported on countable infinitely many points $\left\{ \xi
_{j}\right\} _{j=1}^{\infty }\subset \mathbb{S}^{1}$ with $\nu _{j}=\nu
\left( \left\{ \xi _{j}\right\} \right) $. We let%
\begin{equation}
\ell _{\mathbb{C}}^{2}=\left\{ \left( x_{j}\right) _{j=1}^{\infty }:x_{j}\in 
\mathbb{C},\sum_{j=1}^{\infty }\left\vert x_{j}\right\vert ^{2}<\infty
\right\} .  \label{eq3.45}
\end{equation}%
It is equipped with the Hermite inner product%
\begin{equation}
\left\langle x,y\right\rangle =\sum_{j=1}^{\infty }x_{j}\overline{y_{j}}%
\text{ for }x,y\in \ell _{\mathbb{C}}^{2}.  \label{eq3.46}
\end{equation}%
We construct $m+1$ vectors in $\ell _{\mathbb{C}}^{2}$,%
\begin{equation}
\begin{array}{l}
u_{0}=\left( \sqrt{\nu _{1}},\sqrt{\nu _{2}},\cdots \right) , \\ 
u_{1}=\left( \sqrt{\nu _{1}}\xi _{1},\sqrt{\nu _{2}}\xi _{2},\cdots \right) ,
\\ 
\cdots \\ 
u_{m}=\left( \sqrt{\nu _{1}}\xi _{1}^{m},\sqrt{\nu _{2}}\xi _{2}^{m},\cdots
\right) .%
\end{array}
\label{eq3.47}
\end{equation}%
Then again we have%
\begin{equation}
\nu \in \mathcal{M}_{m}^{c}\left( \mathbb{S}^{1}\right) \Leftrightarrow
u_{0},\cdots ,u_{m}\text{ is orthonormal in }\ell _{\mathbb{C}}^{2}.
\label{eq3.48}
\end{equation}

\begin{proof}[Proof of Proposition \protect\ref{prop3.3}]
Let $\nu \in \mathcal{M}_{m}^{c}\left( \mathbb{S}^{1}\right) $. If $\nu $ is
supported on finitely many points, namely $\left\{ \xi _{j}\right\}
_{j=1}^{N}\subset \mathbb{S}^{1}$, then $u_{0},\cdots ,u_{m}$ defined in (%
\ref{eq3.43}) is orthonormal in $\mathbb{C}^{N}$, Let $e_{1},\cdots ,e_{N}$
be the standard base of $\mathbb{C}^{N}$, then for $1\leq j\leq N$, we have%
\begin{equation}
\sum_{k=0}^{m}\left\vert \left\langle e_{j},u_{k}\right\rangle \right\vert
^{2}=\left( m+1\right) \nu _{j}.  \label{eq3.49}
\end{equation}%
Note that%
\begin{equation}
0<\sum_{k=0}^{m}\left\vert \left\langle e_{j},u_{k}\right\rangle \right\vert
^{2}\leq 1.  \label{eq3.50}
\end{equation}%
Hence%
\begin{eqnarray*}
\sum_{j=1}^{N}\nu _{j}^{\theta } &=&\frac{1}{\left( m+1\right) ^{\theta }}%
\sum_{j=1}^{N}\left( \sum_{k=0}^{m}\left\vert \left\langle
e_{j},u_{k}\right\rangle \right\vert ^{2}\right) ^{\theta } \\
&\geq &\frac{1}{\left( m+1\right) ^{\theta }}\sum_{j=1}^{N}\sum_{k=0}^{m}%
\left\vert \left\langle e_{j},u_{k}\right\rangle \right\vert ^{2} \\
&=&\frac{1}{\left( m+1\right) ^{\theta }}\sum_{k=0}^{m}\sum_{j=1}^{N}\left%
\vert \left\langle e_{j},u_{k}\right\rangle \right\vert ^{2} \\
&=&\frac{1}{\left( m+1\right) ^{\theta }}\sum_{k=0}^{m}1 \\
&=&\left( m+1\right) ^{1-\theta }.
\end{eqnarray*}%
If $\sum_{j=1}^{N}\nu _{j}^{\theta }=\left( m+1\right) ^{1-\theta }$, then
for $1\leq j\leq N$,%
\begin{equation}
\sum_{k=0}^{m}\left\vert \left\langle e_{j},u_{k}\right\rangle \right\vert
^{2}=1.  \label{eq3.51}
\end{equation}%
By (\ref{eq3.49}) we know $\nu _{j}=\frac{1}{m+1}$. Since $\sum_{j=1}^{N}\nu
_{j}=1$, we see $N=m+1$. Moreover%
\begin{equation}
e_{j}=\sum_{k=0}^{m}\left\langle e_{j},u_{k}\right\rangle u_{k}\in \limfunc{%
span}\left\{ u_{0},\cdots ,u_{m}\right\} .  \label{eq3.52}
\end{equation}%
Hence $u_{0},\cdots ,u_{m}$ is an orthonormal base for $\mathbb{C}^{m+1}$.
This implies the matrix%
\begin{equation}
A=\left[ u_{0},u_{1},\cdots ,u_{m}\right] =\left[ 
\begin{array}{cccc}
\frac{1}{\sqrt{m+1}} & \frac{\xi _{1}}{\sqrt{m+1}} & \cdots & \frac{\xi
_{1}^{m}}{\sqrt{m+1}} \\ 
\frac{1}{\sqrt{m+1}} & \frac{\xi _{2}}{\sqrt{m+1}} & \cdots & \frac{\xi
_{2}^{m}}{\sqrt{m+1}} \\ 
\vdots & \vdots & \ddots & \vdots \\ 
\frac{1}{\sqrt{m+1}} & \frac{\xi _{m+1}}{\sqrt{m+1}} & \cdots & \frac{\xi
_{m+1}^{m}}{\sqrt{m+1}}%
\end{array}%
\right]  \label{eq3.53}
\end{equation}%
is unitary. Let $z_{j}=\frac{\xi _{j}}{\xi _{m+1}}=\xi _{j}\overline{\xi
_{m+1}}$ for $1\leq j\leq m$, then%
\begin{equation}
1+z_{j}+\cdots +z_{j}^{m}=0.  \label{eq3.54}
\end{equation}%
Hence $z_{j}^{m+1}=1$. Since $z_{j}\neq 1$ and $z_{j}\neq z_{k}$ for $j\neq
k $, we see%
\begin{equation}
\left\{ z_{1},\cdots ,z_{m}\right\} =\left\{ e^{i\frac{2\pi }{m+1}},e^{i%
\frac{4\pi }{m+1}},\cdots ,e^{i\frac{2m\pi }{m+1}}\right\} .  \label{eq3.55}
\end{equation}%
Since we can write $\xi _{m+1}=e^{i\alpha }$ for some $\alpha \in \mathbb{R}$%
, we see%
\begin{equation}
\left\{ \xi _{1},\cdots ,\xi _{m+1}\right\} =\left\{ e^{i\alpha },e^{i\left(
\alpha +\frac{2\pi }{m+1}\right) },\cdots ,e^{i\left( \alpha +\frac{2m\pi }{%
m+1}\right) }\right\} .  \label{eq3.56}
\end{equation}%
Hence%
\begin{equation}
\nu =\frac{1}{m+1}\sum_{j=0}^{m}\delta _{e^{i\left( \alpha +\frac{2j\pi }{m+1%
}\right) }}.  \label{eq3.57}
\end{equation}

Assume $\nu $ is supported on countable infinitely many points, namely $%
\left\{ \xi _{j}\right\} _{j=1}^{\infty }\subset \mathbb{S}^{1}$ with $\nu
_{j}=\nu \left( \left\{ \xi _{j}\right\} \right) $, then $u_{0},\cdots
,u_{m} $ given by (\ref{eq3.47}) is orthonormal in $\mathbb{\ell }_{\mathbb{C%
}}^{2}$. Let%
\begin{equation}
e_{j}=\left( 0,\cdots ,0,1,0,\cdots \right) ,\text{ }1\text{ lies at }j\text{%
th position.}  \label{eq3.58}
\end{equation}%
Then for all $j$,%
\begin{equation}
\sum_{k=0}^{m}\left\vert \left\langle e_{j},u_{k}\right\rangle \right\vert
^{2}=\left( m+1\right) \nu _{j}.  \label{eq3.59}
\end{equation}%
By Parseval's relation in $\mathbb{\ell }_{\mathbb{C}}^{2}$, we have%
\begin{equation}
0<\sum_{k=0}^{m}\left\vert \left\langle e_{j},u_{k}\right\rangle \right\vert
^{2}\leq 1.  \label{eq3.60}
\end{equation}%
Hence%
\begin{eqnarray*}
\sum_{j=1}^{\infty }\nu _{j}^{\theta } &=&\frac{1}{\left( m+1\right)
^{\theta }}\sum_{j=1}^{\infty }\left( \sum_{k=0}^{m}\left\vert \left\langle
e_{j},u_{k}\right\rangle \right\vert ^{2}\right) ^{\theta } \\
&\geq &\frac{1}{\left( m+1\right) ^{\theta }}\sum_{j=1}^{\infty
}\sum_{k=0}^{m}\left\vert \left\langle e_{j},u_{k}\right\rangle \right\vert
^{2} \\
&=&\frac{1}{\left( m+1\right) ^{\theta }}\sum_{k=0}^{m}\sum_{j=1}^{\infty
}\left\vert \left\langle e_{j},u_{k}\right\rangle \right\vert ^{2} \\
&=&\left( m+1\right) ^{1-\theta }.
\end{eqnarray*}%
Here we always have%
\begin{equation}
\sum_{j=1}^{\infty }\nu _{j}^{\theta }>\left( m+1\right) ^{1-\theta }.
\label{eq3.61}
\end{equation}%
Indeed if $\sum_{j=1}^{\infty }\nu _{j}^{\theta }=\left( m+1\right)
^{1-\theta }$, then for all $j$,%
\begin{equation}
\sum_{k=0}^{m}\left\vert \left\langle e_{j},u_{k}\right\rangle \right\vert
^{2}=1.  \label{eq3.62}
\end{equation}%
By (\ref{eq3.59}) we see $\nu _{j}=\frac{1}{m+1}$. This contradicts with the
fact $\sum_{j=1}^{\infty }\nu _{j}=1$.

Sum up the two cases, we see $\Theta \left( m,\theta ,1\right) \geq \left(
m+1\right) ^{1-\theta }$. On the other hand, for any $\alpha \in \mathbb{R}$%
, it is clear that%
\begin{equation}
\frac{1}{m+1}\sum_{j=0}^{m}\delta _{e^{i\left( \alpha +\frac{2j\pi }{m+1}%
\right) }}\in \mathcal{M}_{m}^{c}\left( \mathbb{S}^{1}\right) .
\label{eq3.63}
\end{equation}%
Hence $\Theta \left( m,\theta ,1\right) \leq \left( m+1\right) ^{1-\theta }$%
. It follows that $\Theta \left( m,\theta ,1\right) =\left( m+1\right)
^{1-\theta }$.
\end{proof}

\section{Further discussions\label{sec4}}

In this section we will first show the constant $\frac{S_{n,p}^{p}}{\left(
n+2\right) ^{\frac{p}{n}}}+\varepsilon $ in Corollary \ref{cor1.1} is almost
optimal. Then we will discuss the application of our approach to higher
order Sobolev spaces and its close relation to the fourth order $Q$
curvature equations in conformal geometry.

\begin{lemma}
\label{lem4.1}Assume $n\in \mathbb{N}$ and $1<p<n$. Let $p^{\ast }=\frac{pn}{%
n-p}$. If $a,b\in \mathbb{R}$ s.t. for any $u\in W^{1,p}\left( \mathbb{S}%
^{n}\right) $ with%
\begin{equation}
\int_{\mathbb{S}^{n}}f\left\vert u\right\vert ^{p^{\ast }}d\mu =0
\label{eq4.1}
\end{equation}%
for all $f\in \overset{\circ }{\mathcal{P}}_{2}$, we have%
\begin{equation}
\left\Vert u\right\Vert _{L^{p^{\ast }}\left( \mathbb{S}^{n}\right)
}^{p}\leq a\left\Vert \nabla u\right\Vert _{L^{p}\left( \mathbb{S}%
^{n}\right) }^{p}+b\left\Vert u\right\Vert _{L^{p}\left( \mathbb{S}%
^{n}\right) }^{p},  \label{eq4.2}
\end{equation}%
then%
\begin{equation}
a\geq \frac{S_{n,p}^{p}}{\left( n+2\right) ^{\frac{p}{n}}}.  \label{eq4.3}
\end{equation}%
Here $S_{n,p}$ is given by 
\begin{equation}
S_{n,p}=\frac{1}{n}\left( \frac{n\left( p-1\right) }{n-p}\right) ^{1-\frac{1%
}{p}}\left( \frac{n!}{\Gamma \left( \frac{n}{p}\right) \Gamma \left( n+1-%
\frac{n}{p}\right) \left\vert \mathbb{S}^{n-1}\right\vert }\right) ^{\frac{1%
}{n}}  \label{eq4.4}
\end{equation}
\end{lemma}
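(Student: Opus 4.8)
The plan is to test the hypothesis (\ref{eq4.2}) against a family of functions that concentrate, in a symmetric way, at the $n+2$ vertices of a regular $\left( n+1\right) $-simplex; by Proposition \ref{prop3.2} this is exactly the configuration realizing $\Theta \left( 2,\frac{n-p}{n},n\right) =\left( n+2\right) ^{p/n}$. Fix vertices $\xi _{1},\dots ,\xi _{n+2}\in \mathbb{S}^{n}$ of a regular $\left( n+1\right) $-simplex inscribed in the unit ball and let $G\subset O\left( n+1\right) $ be its symmetry group, so $G\cong S_{n+2}$ acting on $\mathbb{R}^{n+1}$ by the standard representation, which is absolutely irreducible. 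The algebraic fact driving the argument is that, by irreducibility, the only $G$-invariant elements of $\mathcal{P}_{2}$ are the polynomials $a+b\left\vert x\right\vert ^{2}$, which restrict to constants on $\mathbb{S}^{n}$. Hence for any $f\in \overset{\circ }{\mathcal{P}}_{2}$ its group average $\bar{f}=\frac{1}{\left\vert G\right\vert }\sum_{g\in G}f\circ g^{-1}\in \mathcal{P}_{2}$ is $G$-invariant with $\bar{f}|_{\mathbb{S}^{n}}\equiv \frac{1}{\left\vert \mathbb{S}^{n}\right\vert }\int_{\mathbb{S}^{n}}\bar{f}\,d\mu =\frac{1}{\left\vert \mathbb{S}^{n}\right\vert }\int_{\mathbb{S}^{n}}f\,d\mu =0$. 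Therefore, if $u$ is \emph{any} $G$-invariant function on $\mathbb{S}^{n}$, then using also the $G$-invariance of $\mu $,
\[
\int_{\mathbb{S}^{n}}f\left\vert u\right\vert ^{p^{\ast }}d\mu =\int_{\mathbb{S}^{n}}\bar{f}\left\vert u\right\vert ^{p^{\ast }}d\mu =0\qquad \text{for all }f\in \overset{\circ }{\mathcal{P}}_{2},
\]
so the constraint (\ref{eq4.1}) holds automatically for $u$ and (\ref{eq4.2}) may be applied to it.

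Next I would take, for small $\varepsilon >0$, the standard Aubin--Talenti test function concentrating at a point $\xi \in \mathbb{S}^{n}$ at scale $\varepsilon $ (a truncated, normalized copy of (\ref{eq1.5}) in normal coordinates at $\xi $, radially symmetric about $\xi $), denoted $U_{\varepsilon ,\xi }$, and set $u_{\varepsilon }=\sum_{i=1}^{n+2}U_{\varepsilon ,\xi _{i}}$. Since $G$ permutes the $\xi _{i}$ and the bubbles are mutually congruent, $u_{\varepsilon }$ is $G$-invariant, hence satisfies (\ref{eq4.1}), so (\ref{eq4.2}) gives $\left\Vert u_{\varepsilon }\right\Vert _{L^{p^{\ast }}}^{p}\leq a\left\Vert \nabla u_{\varepsilon }\right\Vert _{L^{p}}^{p}+b\left\Vert u_{\varepsilon }\right\Vert _{L^{p}}^{p}$. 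Normalizing so that $\left\Vert \nabla U_{\varepsilon ,\xi }\right\Vert _{L^{p}}^{p}=1$, one has $\left\Vert U_{\varepsilon ,\xi }\right\Vert _{L^{p^{\ast }}}^{p^{\ast }}\rightarrow S_{n,p}^{p^{\ast }}$ and $\left\Vert U_{\varepsilon ,\xi }\right\Vert _{L^{p}}^{p}\rightarrow 0$ as $\varepsilon \rightarrow 0$; the standard interaction estimates for bubbles concentrating at distinct fixed points then give $\left\Vert \nabla u_{\varepsilon }\right\Vert _{L^{p}}^{p}\rightarrow n+2$ and $\left\Vert u_{\varepsilon }\right\Vert _{L^{p}}^{p}\rightarrow 0$, while the superadditivity of $t\mapsto t^{p^{\ast }}$ on $\left[ 0,\infty \right) $ (recall $p^{\ast }>1$) gives $\left\Vert u_{\varepsilon }\right\Vert _{L^{p^{\ast }}}^{p^{\ast }}\geq \sum_{i=1}^{n+2}\left\Vert U_{\varepsilon ,\xi _{i}}\right\Vert _{L^{p^{\ast }}}^{p^{\ast }}\rightarrow \left( n+2\right) S_{n,p}^{p^{\ast }}$. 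Consequently
\[
\liminf_{\varepsilon \rightarrow 0}\frac{\left\Vert u_{\varepsilon }\right\Vert _{L^{p^{\ast }}}^{p}}{\left\Vert \nabla u_{\varepsilon }\right\Vert _{L^{p}}^{p}}\geq \frac{\left( \left( n+2\right) S_{n,p}^{p^{\ast }}\right) ^{p/p^{\ast }}}{n+2}=\left( n+2\right) ^{\frac{p}{p^{\ast }}-1}S_{n,p}^{p}=\frac{S_{n,p}^{p}}{\left( n+2\right) ^{p/n}},
\]
using $\frac{p}{p^{\ast }}-1=\frac{n-p}{n}-1=-\frac{p}{n}$. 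Dividing $\left\Vert u_{\varepsilon }\right\Vert _{L^{p^{\ast }}}^{p}\leq a\left\Vert \nabla u_{\varepsilon }\right\Vert _{L^{p}}^{p}+b\left\Vert u_{\varepsilon }\right\Vert _{L^{p}}^{p}$ by $\left\Vert \nabla u_{\varepsilon }\right\Vert _{L^{p}}^{p}$ (which tends to $n+2>0$) and letting $\varepsilon \rightarrow 0$ makes the $b$-term vanish and yields $S_{n,p}^{p}/\left( n+2\right) ^{p/n}\leq a$, which is (\ref{eq4.3}).

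The one genuinely technical ingredient is the asymptotic analysis of the glued function $u_{\varepsilon }$, i.e.\ the limits $\left\Vert \nabla u_{\varepsilon }\right\Vert _{L^{p}}^{p}\rightarrow n+2$ and $\left\Vert u_{\varepsilon }\right\Vert _{L^{p}}^{p}\rightarrow 0$: one truncates each bubble to a small fixed geodesic ball about its center and bounds the lower-order interaction terms, using that the $\xi _{i}$ stay at fixed positive mutual distance while the profiles concentrate --- this is the same computation underlying the non-attainment part of the sharp Sobolev inequality on compact manifolds, so I would simply cite it; the $L^{p^{\ast }}$ lower bound, by contrast, is the exact superadditivity inequality and needs nothing. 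I would also record the elementary fact used in the first paragraph, that the standard representation of $S_{n+2}$ on the zero-sum hyperplane of $\mathbb{R}^{n+2}$ is absolutely irreducible, so by Schur's lemma it admits no invariant symmetric bilinear form other than scalar multiples of the Euclidean one. Everything else is bookkeeping.
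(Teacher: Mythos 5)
Your argument is correct, and it takes a genuinely different route from the paper's. The paper constructs the same glued radial profile $v(x)=\sum_{i=1}^{n+2}\phi_{\varepsilon}(\overline{xx_{i}})$ concentrating at the simplex vertices, but instead of appealing to symmetry it \emph{computes} that $\int_{\mathbb{S}^{n}}v^{p^{\ast}}f\,d\mu=O(\varepsilon^{-\frac{n}{p}+\tau})$ for $f\in\overset{\circ}{\mathcal{P}}_{2}$ (using $\sum_{i}f(x_{i})=0$), and then performs a correction --- adding $\sum_{j}\beta_{j}\psi_{j}+c_{1}\varepsilon^{-\frac{n}{p}+\tau}$ to $v^{p^{\ast}}$, with the $\psi_{j}$ a dual system supported away from the vertices --- so that the constraint holds exactly and the resulting $u^{p^{\ast}}$ is nonnegative; it then tracks how these corrections perturb the three norms. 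Your observation that the test function can be made invariant under the simplex symmetry group $G\cong S_{n+2}$, together with the Schur-lemma fact that $\mathcal{P}_{2}^{G}$ restricted to $\mathbb{S}^{n}$ consists of constants, makes the constraint hold \emph{identically} and eliminates the correction step entirely. This is cleaner for $m=2$; in fact the paper's own $v$ is already $G$-invariant, so in retrospect the $\beta_{j}$ produced by their linear system all vanish. The trade-off is that the paper's perturbative template does not rely on the existence of a transitive symmetry group for the extremal point configuration, and so it transfers to situations (e.g.\ cubature configurations realizing $\Theta(m,\theta,n)$ for $m\geq3$) where such a group may be absent or hard to identify, whereas your argument is tied to the highly symmetric simplex. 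The remaining asymptotics in your write-up are standard and, as you note, simplify further here because the truncated bubbles can be arranged to have pairwise disjoint supports, so the interaction terms vanish rather than merely being lower order, and the superadditivity step becomes an equality.
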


Our approach to Lemma \ref{lem4.1} is in the same spirit as the method in
the proof of \cite[Lemma \ref{lem3.1}]{ChH}. We first recall for $\alpha
,\beta >0$,%
\begin{equation}
B\left( \alpha ,\beta \right) =\int_{0}^{1}\left( 1-t\right) ^{\alpha
-1}t^{\beta -1}dt  \label{eq4.5}
\end{equation}%
and%
\begin{equation}
B\left( \alpha ,\beta \right) =\frac{\Gamma \left( \alpha \right) \Gamma
\left( \beta \right) }{\Gamma \left( \alpha +\beta \right) }.  \label{eq4.6}
\end{equation}

Let $x_{1},\cdots ,x_{n+2}\in \mathbb{S}^{n}$ be the vertices of a regular $%
\left( n+1\right) $-simplex embedded in the unit ball, then by \cite[proof
of Lemma 3.1]{Ha} we know $\frac{1}{n+2}\sum_{i=1}^{n+2}\delta _{x_{i}}\in 
\mathcal{M}_{2}^{c}\left( \mathbb{S}^{n}\right) $. For $x,y\in \mathbb{S}%
^{n} $, we denote $\overline{xy}$ as the geodesic distance between $x$ and $%
y $ on $\mathbb{S}^{n}$. For $r>0$ and $x\in \mathbb{S}^{n}$, we define%
\begin{equation}
B_{r}\left( x\right) =\left\{ y\in \mathbb{S}^{n}:\overline{xy}<r\right\} .
\label{eq4.7}
\end{equation}

Let $\delta >0$ be small enough such that for $1\leq i<j\leq n+2$, $%
\overline{B_{2\delta }\left( x_{i}\right) }\cap \overline{B_{2\delta }\left(
x_{j}\right) }=\emptyset $. For $0<\varepsilon <\delta $, we let%
\begin{equation}
\phi _{\varepsilon }\left( t\right) =\left\{ 
\begin{array}{ll}
\left( \varepsilon +t^{p^{\prime }}\right) ^{-\frac{n}{p\ast }}, & 
0<t<\delta , \\ 
\left( \varepsilon +\delta ^{p^{\prime }}\right) ^{-\frac{n}{p\ast }}\left(
2-\frac{t}{\delta }\right) , & \delta <t<2\delta , \\ 
0 & t>2\delta .%
\end{array}%
\right.  \label{eq4.8}
\end{equation}%
Here%
\begin{equation}
p^{\prime }=\frac{p}{p-1},\quad p^{\ast }=\frac{pn}{n-p}.  \label{eq4.9}
\end{equation}%
Define%
\begin{equation}
v\left( x\right) =\sum_{i=1}^{n+2}\phi _{\varepsilon }\left( \overline{xx_{i}%
}\right) ,  \label{eq4.10}
\end{equation}%
then%
\begin{eqnarray}
&&\int_{\mathbb{S}^{n}}v^{p^{\ast }}d\mu  \label{eq4.11} \\
&=&\sum_{i=1}^{n+2}\int_{B_{2\delta }\left( x_{i}\right) }\phi _{\varepsilon
}\left( \overline{xx_{i}}\right) ^{p^{\ast }}d\mu \left( x\right)  \notag \\
&=&\left( n+2\right) \left\vert \mathbb{S}^{n-1}\right\vert
\int_{0}^{2\delta }\phi _{\varepsilon }\left( r\right) ^{p^{\ast }}\sin
^{n-1}rdr  \notag \\
&=&\varepsilon ^{-\frac{n}{p}}\left\vert \mathbb{S}^{n-1}\right\vert \frac{%
n+2}{p^{\prime }}B\left( \frac{n}{p},\frac{n}{p^{\prime }}\right) +O\left(
\varepsilon ^{-\frac{n}{p}+\tau }\right)  \notag
\end{eqnarray}%
as $\varepsilon \rightarrow 0^{+}$. Here $\tau $ is a fixed number satisfying%
\begin{equation}
0<\tau <\min \left\{ \frac{n}{p},\frac{2}{p^{\prime }},\frac{p^{\ast }}{%
p^{\prime }}\right\} .  \label{eq4.12}
\end{equation}%
For any $f\in \overset{\circ }{\mathcal{P}}_{2}$, we have%
\begin{eqnarray}
&&\int_{\mathbb{S}^{n}}v^{p^{\ast }}fd\mu  \label{eq4.13} \\
&=&\sum_{i=1}^{n+2}\int_{B_{2\delta }\left( x_{i}\right) }\phi _{\varepsilon
}\left( \overline{xx_{i}}\right) ^{p^{\ast }}f\left( x\right) d\mu \left(
x\right)  \notag \\
&=&\sum_{i=1}^{n+2}\left( \int_{B_{2\delta }\left( x_{i}\right) }\phi
_{\varepsilon }\left( \overline{xx_{i}}\right) ^{p^{\ast }}f\left(
x_{i}\right) d\mu \left( x\right) +\int_{B_{2\delta }\left( x_{i}\right)
}\phi _{\varepsilon }\left( \overline{xx_{i}}\right) ^{p^{\ast }}O\left( 
\overline{xx_{i}}^{2}\right) d\mu \left( x\right) \right)  \notag \\
&=&O\left( \varepsilon ^{-\frac{n}{p}+\tau }\right)  \notag
\end{eqnarray}%
as $\varepsilon \rightarrow 0^{+}$. Here we have used the equality%
\begin{equation}
\sum_{i=1}^{n+2}f\left( x_{i}\right) =0.  \label{eq4.14}
\end{equation}

To get a test function satisfying (\ref{eq4.1}), we need to do some
corrections. Let us fix a base of $\left. \overset{\circ }{\mathcal{P}}%
_{2}\right\vert _{\mathbb{S}^{n}}$, namely $\left. f_{1}\right\vert _{%
\mathbb{S}^{n}},\cdots ,\left. f_{l}\right\vert _{\mathbb{S}^{n}}$, here $%
f_{1},\cdots ,f_{l}\in \overset{\circ }{\mathcal{P}}_{2}$ and%
\begin{equation}
l=\frac{n^{2}+3n}{2}+n+1.  \label{eq4.15}
\end{equation}%
We can find $\psi _{1},\cdots ,\psi _{l}\in C_{c}^{\infty }\left( \mathbb{S}%
^{n}\backslash \dbigcup\limits_{i=1}^{N}\overline{B_{2\delta }\left(
x_{i}\right) }\right) $ such that the determinant%
\begin{equation}
\det \left[ \int_{\mathbb{S}^{n}}\psi _{j}f_{k}d\mu \right] _{1\leq j,k\leq
l}\neq 0.  \label{eq4.16}
\end{equation}%
Indeed, fix a nonzero smooth function $\eta \in C_{c}^{\infty }\left( 
\mathbb{S}^{n}\backslash \dbigcup\limits_{i=1}^{N}\overline{B_{2\delta
}\left( x_{i}\right) }\right) $, then $\eta f_{1},\cdots ,\eta f_{l}$ are
linearly independent. It follows that the Gram matrix%
\begin{equation*}
\left[ \int_{\mathbb{S}^{n}}\eta ^{2}f_{j}f_{k}d\mu \right] _{1\leq j,k\leq
l}
\end{equation*}%
is positive definite. Then $\psi _{j}=\eta ^{2}p_{j}$ satisfies (\ref{eq4.16}%
).

It follows from (\ref{eq4.16}) that we can find $\beta _{1},\cdots ,\beta
_{l}\in \mathbb{R}$ such that%
\begin{equation}
\int_{\mathbb{S}^{n}}\left( v^{p^{\ast }}+\sum_{j=1}^{l}\beta _{j}\psi
_{j}\right) f_{k}d\mu =0  \label{eq4.17}
\end{equation}%
for $k=1,\cdots ,l$. Moreover by (\ref{eq4.13}) we have%
\begin{equation}
\beta _{j}=O\left( \varepsilon ^{-\frac{n}{p}+\tau }\right)  \label{eq4.18}
\end{equation}%
as $\varepsilon \rightarrow 0^{+}$. As a consequence we can find a constant $%
c_{1}>0$ such that%
\begin{equation}
\sum_{j=1}^{l}\beta _{j}\psi _{j}+c_{1}\varepsilon ^{-\frac{n}{p}+\tau }\geq
\varepsilon ^{-\frac{n}{p}+\tau }.  \label{eq4.19}
\end{equation}%
We define $u$ by%
\begin{equation}
u^{p^{\ast }}=v^{p^{\ast }}+\sum_{j=1}^{l}\beta _{j}\psi
_{j}+c_{1}\varepsilon ^{-\frac{n}{p}+\tau }.  \label{eq4.20}
\end{equation}%
This $u$ will be the test function we use to prove Lemma \ref{lem4.1}.

It follows from (\ref{eq4.17}) that $\int_{\mathbb{S}^{n}}u^{p^{\ast }}fd\mu
=0$ for all $f\in \overset{\circ }{\mathcal{P}}_{2}$. Moreover using (\ref%
{eq4.11}) and (\ref{eq4.18}) we see%
\begin{eqnarray}
\int_{\mathbb{S}^{n}}u^{p^{\ast }}d\mu &=&\varepsilon ^{-\frac{n}{p}%
}\left\vert \mathbb{S}^{n-1}\right\vert \frac{n+2}{p^{\prime }}B\left( \frac{%
n}{p},\frac{n}{p^{\prime }}\right) +O\left( \varepsilon ^{-\frac{n}{p}+\tau
}\right)  \label{eq4.21} \\
&=&\varepsilon ^{-\frac{n}{p}}\left\vert \mathbb{S}^{n-1}\right\vert \frac{%
n+2}{p^{\prime }}B\left( \frac{n}{p},\frac{n}{p^{\prime }}\right) \left(
1+o\left( 1\right) \right)  \notag
\end{eqnarray}%
hence%
\begin{equation}
\left\Vert u\right\Vert _{L^{p^{\ast }}}^{p}=\varepsilon ^{-\frac{n}{p^{\ast
}}}\left\vert \mathbb{S}^{n-1}\right\vert ^{\frac{p}{p^{\ast }}}\left( \frac{%
n+2}{p^{\prime }}\right) ^{\frac{p}{p\ast }}B\left( \frac{n}{p},\frac{n}{%
p^{\prime }}\right) ^{\frac{p}{p^{\ast }}}\left( 1+o\left( 1\right) \right)
\label{eq4.22}
\end{equation}%
as $\varepsilon \rightarrow 0^{+}$. On the other hand, by (\ref{eq4.18}) and
(\ref{eq4.20}) we get%
\begin{equation}
u^{p^{\ast }}\leq v^{p^{\ast }}+c\varepsilon ^{-\frac{n}{p}+\tau },
\label{eq4.23}
\end{equation}%
hence%
\begin{equation}
u^{p}\leq \left( v^{p^{\ast }}+c\varepsilon ^{-\frac{n}{p}+\tau }\right) ^{%
\frac{p}{p^{\ast }}}\leq v^{p}+c\varepsilon ^{-\frac{n}{p^{\ast }}+\frac{p}{%
p^{\ast }}\tau }.  \label{eq4.24}
\end{equation}%
It follows that%
\begin{eqnarray}
&&\int_{\mathbb{S}^{n}}u^{p}d\mu  \label{eq4.25} \\
&\leq &\int_{\mathbb{S}^{n}}v^{p}d\mu +c\varepsilon ^{-\frac{n}{p^{\ast }}+%
\frac{p}{p^{\ast }}\tau }  \notag \\
&=&\left( n+2\right) \left\vert \mathbb{S}^{n-1}\right\vert
\int_{0}^{2\delta }\phi _{\varepsilon }\left( r\right) ^{p}\sin
^{n-1}rdr+c\varepsilon ^{-\frac{n}{p^{\ast }}+\frac{p}{p^{\ast }}\tau } 
\notag \\
&\leq &c\varepsilon ^{-\frac{n}{p^{\ast }}+\frac{p}{p^{\ast }}\tau }.  \notag
\end{eqnarray}%
Here we have used the choice of $\tau $ in (\ref{eq4.12}). Hence%
\begin{equation}
\left\Vert u\right\Vert _{L^{p}}^{p}=O\left( \varepsilon ^{-\frac{n}{p^{\ast
}}+\frac{p}{p^{\ast }}\tau }\right) =o\left( \varepsilon ^{-\frac{n}{p^{\ast
}}}\right)  \label{eq4.26}
\end{equation}%
as $\varepsilon \rightarrow 0^{+}$.

Next we note that%
\begin{eqnarray}
&&\left\Vert \nabla u\right\Vert _{L^{p}}^{p}  \label{eq4.27} \\
&=&\sum_{i=1}^{n+2}\int_{B_{\delta }\left( x_{i}\right) }\left\vert \nabla
u\right\vert ^{p}d\mu +O\left( \varepsilon ^{-\frac{n}{p^{\ast }}+\frac{p}{%
p^{\ast }}\tau }\right)  \notag \\
&=&\left( n+2\right) \left\vert \mathbb{S}^{n-1}\right\vert \left( \frac{%
np^{\prime }}{p^{\ast }}\right) ^{p}\int_{0}^{\delta }\frac{\left(
\varepsilon +r^{p^{\prime }}\right) ^{-n}r^{p^{\prime }}}{\left[
1+c_{1}\varepsilon ^{-\frac{n}{p}+\tau }\left( \varepsilon +r^{p^{\prime
}}\right) ^{n}\right] ^{p-\frac{p}{p^{\ast }}}}\sin ^{n-1}rdr+o\left(
\varepsilon ^{-\frac{n}{p^{\ast }}}\right)  \notag \\
&=&\varepsilon ^{-\frac{n}{p\ast }}\left\vert \mathbb{S}^{n-1}\right\vert
\left( \frac{n-p}{p-1}\right) ^{p}\cdot \frac{n+2}{p^{\prime }}B\left( \frac{%
n}{p}-1,\frac{n}{p^{\prime }}+1\right) +o\left( \varepsilon ^{-\frac{n}{%
p^{\ast }}}\right) .  \notag
\end{eqnarray}

It follows from (\ref{eq4.2}), (\ref{eq4.22}), (\ref{eq4.26}) and (\ref%
{eq4.27}) that%
\begin{eqnarray}
&&\varepsilon ^{-\frac{n}{p^{\ast }}}\left\vert \mathbb{S}^{n-1}\right\vert
^{\frac{p}{p^{\ast }}}\left( \frac{n+2}{p^{\prime }}\right) ^{\frac{p}{p\ast 
}}B\left( \frac{n}{p},\frac{n}{p^{\prime }}\right) ^{\frac{p}{p^{\ast }}%
}\left( 1+o\left( 1\right) \right)  \label{eq4.28} \\
&\leq &a\varepsilon ^{-\frac{n}{p\ast }}\left\vert \mathbb{S}%
^{n-1}\right\vert \left( \frac{n-p}{p-1}\right) ^{p}\cdot \frac{n+2}{%
p^{\prime }}B\left( \frac{n}{p}-1,\frac{n}{p^{\prime }}+1\right) +o\left(
\varepsilon ^{-\frac{n}{p^{\ast }}}\right) .  \notag
\end{eqnarray}%
Dividing both sides by $\varepsilon ^{-\frac{n}{p^{\ast }}}$ and letting $%
\varepsilon \rightarrow 0^{+}$, we see%
\begin{eqnarray*}
a &\geq &\left\vert \mathbb{S}^{n-1}\right\vert ^{-\frac{p}{n}}\left( \frac{%
p-1}{n-p}\right) ^{p}\left( \frac{n+2}{p^{\prime }}\right) ^{-\frac{p}{n}}%
\frac{B\left( \frac{n}{p},\frac{n}{p^{\prime }}\right) ^{\frac{p}{p^{\ast }}}%
}{B\left( \frac{n}{p}-1,\frac{n}{p^{\prime }}+1\right) } \\
&=&\left( n+2\right) ^{-\frac{p}{n}}S_{n,p}^{p}.
\end{eqnarray*}%
Here we have used the identity (\ref{eq4.6}). Lemma \ref{lem4.1} follows.

\subsection{$W^{s,\frac{n}{s}}\left( \mathbb{S}^{n}\right) $ for even $s$%
\label{sec4.1}}

Let $s\in \mathbb{N}$ be even, $1<p<\frac{n}{s}$, then for any $\varphi \in
C_{c}^{\infty }\left( \mathbb{R}^{n}\right) $,%
\begin{equation}
\left\Vert \varphi \right\Vert _{L^{\frac{np}{n-sp}}}\leq c\left(
n,s,p\right) \left\Vert \Delta ^{\frac{s}{2}}\varphi \right\Vert _{L^{p}}.
\label{eq4.29}
\end{equation}%
We denote%
\begin{eqnarray}
S_{n,s,p} &=&\sup_{\varphi \in C_{c}^{\infty }\left( \mathbb{R}^{n}\right)
\backslash \left\{ 0\right\} }\frac{\left\Vert \varphi \right\Vert _{L^{%
\frac{np}{n-sp}}}}{\left\Vert \Delta ^{\frac{s}{2}}\varphi \right\Vert
_{L^{p}}}  \label{eq4.30} \\
&=&\sup \left\{ \frac{\left\Vert u\right\Vert _{L^{\frac{np}{n-sp}}}}{%
\left\Vert \Delta ^{\frac{s}{2}}u\right\Vert _{L^{p}}}:u\in L^{\frac{np}{n-sp%
}}\left( \mathbb{R}^{n}\right) \backslash \left\{ 0\right\} \text{ s.t. }%
\Delta ^{\frac{s}{2}}u\in L^{p}\left( \mathbb{R}^{n}\right) \right\} . 
\notag
\end{eqnarray}%
Let $\left( M^{n},g\right) $ be a smooth compact Riemannian manifold of
dimension $n$ and $1<p<\frac{n}{s}$. Then we have the Sobolev space $%
W^{s,p}\left( M\right) $ with norm%
\begin{equation}
\left\Vert u\right\Vert _{W^{s,p}\left( M\right) }=\left(
\sum_{k=0}^{s}\left\Vert D^{k}u\right\Vert _{L^{p}\left( M\right)
}^{p}\right) ^{\frac{1}{p}}.  \label{eq4.31}
\end{equation}%
Here $D^{k}u$ is the covariant derivative of $u$ associated with the metric $%
g$. We have Aubin's almost sharp inequality: for any $\varepsilon >0$,%
\begin{equation}
\left\Vert u\right\Vert _{L^{\frac{np}{n-sp}}\left( M\right) }^{p}\leq
\left( S_{n,s,p}^{p}+\varepsilon \right) \left\Vert \Delta ^{\frac{s}{2}%
}u\right\Vert _{L^{p}\left( M\right) }^{p}+c\left( \varepsilon \right)
\left\Vert u\right\Vert _{L^{p}\left( M\right) }^{p}  \label{eq4.32}
\end{equation}%
for $u\in W^{s,p}\left( M\right) $ and the associated concentration
compactness principle (see \cite{DHL, He2, Ln1, Ln2}).

\begin{proposition}
\label{prop4.1}Let $\left( M^{n},g\right) $ be a smooth compact Riemannian
manifold of dimension $n$, $s\in \mathbb{N}$ be even and $1<p<\frac{n}{s}$.
If $u_{i}\in W^{s,p}\left( M\right) $ s.t. $u_{i}\rightharpoonup u$ weakly
in $W^{s,p}\left( M\right) $,%
\begin{eqnarray*}
\left\vert \Delta ^{\frac{s}{2}}u_{i}\right\vert ^{p}d\mu &\rightarrow
&\left\vert \Delta ^{\frac{s}{2}}u\right\vert ^{p}d\mu +\sigma \text{ as
measure,} \\
\left\vert u_{i}\right\vert ^{\frac{np}{n-sp}}d\mu &\rightarrow &\left\vert
u\right\vert ^{\frac{np}{n-sp}}d\mu +\nu \text{ as measure,}
\end{eqnarray*}%
here $\mu $ is the measure associated with $g$, then we can find countably
many points $x_{i}\in M$ s.t.%
\begin{eqnarray}
\nu &=&\sum_{i}\nu _{i}\delta _{x_{i}},  \label{eq4.33} \\
\nu _{i}^{\frac{1}{p}-\frac{s}{n}} &\leq &S_{n,s,p}\sigma _{i}^{\frac{1}{p}},
\label{eq4.34}
\end{eqnarray}%
here $\nu _{i}=\nu \left( \left\{ x_{i}\right\} \right) $, $\sigma
_{i}=\sigma \left( \left\{ x_{i}\right\} \right) $.
\end{proposition}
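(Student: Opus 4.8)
The plan is to transcribe the standard concentration-compactness argument (the case $s=1$ is \thmref{thm2.1}; see also \cite{He2, Ln1, Ln2}), the only point needing care being the Leibniz expansion of $\Delta^{s/2}$. Write $p^{\ast}=\frac{np}{n-sp}$ and $S=S_{n,s,p}$, and note $\frac1p-\frac sn=\frac1{p^{\ast}}$. I will use Aubin's almost sharp inequality (\ref{eq4.32}) together with the fact that, $M$ being compact, $W^{s,p}(M)$ embeds compactly into $W^{s-1,p}(M)$; consequently, after passing to a subsequence, $D^{k}u_{i}\to D^{k}u$ strongly in $L^{p}(M)$ for $0\le k\le s-1$, and in particular $u_{i}\to u$ in $L^{p}$ and a.e.

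First I would show that $\nu$ is purely atomic. Put $w_{i}=u_{i}-u\rightharpoonup 0$ in $W^{s,p}(M)$. By the strong convergence of the lower order derivatives and a Brezis--Lieb argument, $|w_{i}|^{p^{\ast}}d\mu\rightharpoonup\nu$ as measures, and after a further subsequence $|\Delta^{s/2}w_{i}|^{p}d\mu\rightharpoonup\rho$ for some finite nonnegative Radon measure $\rho$ on $M$. I apply (\ref{eq4.32}) to $\phi w_{i}$ for $\phi\in C^{\infty}(M)$. By the Leibniz rule $\Delta^{s/2}(\phi w_{i})=\phi\,\Delta^{s/2}w_{i}+R_{i}$, where $R_{i}$ is a finite sum of terms built from a bounded bundle map (depending on $g$ and its covariant derivatives), derivatives of $\phi$, and at most $s-1$ covariant derivatives of $w_{i}$; hence $R_{i}\to 0$ in $L^{p}(M)$. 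Letting $i\to\infty$ and then $\varepsilon\to 0$ gives $\bigl(\int_{M}|\phi|^{p^{\ast}}d\nu\bigr)^{1/p^{\ast}}\le S\bigl(\int_{M}|\phi|^{p}d\rho\bigr)^{1/p}$ for all $\phi\in C(M)$, and the measure-theoretic lemma of Lions (see \cite{He2, Ln1, Ln2}) then forces $\nu=\sum_{i}\nu_{i}\delta_{x_{i}}$ for countably many $x_{i}\in M$.

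Next I would establish (\ref{eq4.34}) at each atom $x_{0}$ of $\nu$ by localization. Take $\phi_{\delta}\in C_{c}^{\infty}(B_{\delta}(x_{0}))$ with $\phi_{\delta}\equiv 1$ near $x_{0}$ and $|D^{a}\phi_{\delta}|\le C\delta^{-a}$, apply (\ref{eq4.32}) to $\phi_{\delta}u_{i}$, and let $i\to\infty$, now using $D^{k}u_{i}\to D^{k}u$ in $L^{p}$ for $k\le s-1$ and $|\Delta^{s/2}u_{i}|^{p}d\mu\rightharpoonup|\Delta^{s/2}u|^{p}d\mu+\sigma$. Writing $R(\phi_{\delta},u)$ for the Leibniz remainder of $\Delta^{s/2}(\phi_{\delta}u)$, the resulting right-hand side involves only $\int|\phi_{\delta}|^{p}d\sigma$, the ``error'' integrals of $|u|^{p^{\ast}}$, $|\Delta^{s/2}u|^{p}$ and $|\phi_{\delta}u|^{p}$ over $B_{\delta}(x_{0})$, and $\|R(\phi_{\delta},u)\|_{L^{p}}$. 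Now let $\delta\to 0$. The error integrals and $\int|\phi_{\delta}|^{p^{\ast}}|u|^{p^{\ast}}d\mu$ vanish by absolute continuity of the integral; $\int|\phi_{\delta}|^{p^{\ast}}d\nu\to\nu(\{x_{0}\})$ and $\int|\phi_{\delta}|^{p}d\sigma\to\sigma(\{x_{0}\})$ by dominated convergence; and a typical term of $R(\phi_{\delta},u)$ carrying $a\ge 1$ derivatives on the cutoff is bounded in $L^{p}$ by $C\delta^{-a}\|D^{s-a}u\|_{L^{p}(B_{\delta}(x_{0}))}$, so the critical Sobolev embedding $W^{a,p}(M)\hookrightarrow L^{np/(n-ap)}(M)$ (valid since $ap\le sp<n$) and Holder's inequality absorb the factor $\delta^{-ap}$ and leave $\bigl(\int_{B_{\delta}(x_{0})}|D^{s-a}u|^{np/(n-ap)}d\mu\bigr)^{(n-ap)/n}\to 0$; the terms of $R(\phi_{\delta},u)$ with no derivative on $\phi_{\delta}$ go to $0$ directly by absolute continuity. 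What survives is $\nu(\{x_{0}\})^{p/p^{\ast}}\le(S^{p}+\varepsilon)\,\sigma(\{x_{0}\})$, and sending $\varepsilon\to 0$ gives $\nu_{i}^{p/p^{\ast}}\le S^{p}\sigma_{i}$, that is, $\nu_{i}^{1/p-s/n}\le S_{n,s,p}\,\sigma_{i}^{1/p}$.

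The only spot where $s>1$ genuinely enters, and hence the main obstacle, is the control of the Leibniz remainders: in the first step one needs $R_{i}\to 0$ in $L^{p}$ as $i\to\infty$ (which is exactly where compactness of $W^{s,p}\hookrightarrow W^{s-1,p}$ on compact $M$ is used), and in the second step one needs $R(\phi_{\delta},u)\to 0$ as the cutoff concentrates (where the critical embeddings $W^{a,p}\hookrightarrow L^{np/(n-ap)}$ and absolute continuity of the integral are used). Once these are in place, the remainder of the proof is a line-by-line transcription of the proof of \thmref{thm2.1}.
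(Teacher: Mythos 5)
The paper does not prove Proposition \ref{prop4.1}: it is stated as known and attributed to \cite{DHL, He2, Ln1, Ln2}, so there is no in-text proof to compare against. Your sketch is a correct rendition of the standard Lions--Hebey concentration-compactness argument (for $s=1$ this is Theorem \ref{thm2.1}), with the two-step structure intact: the reverse H\"{o}lder inequality $\bigl(\int_{M}|\phi|^{p^{\ast}}d\nu\bigr)^{1/p^{\ast}}\le S\bigl(\int_{M}|\phi|^{p}d\rho\bigr)^{1/p}$ obtained by testing (\ref{eq4.32}) on $\phi w_{i}$ forces atomicity of $\nu$, and localizing with $\phi_{\delta}u_{i}$ at each atom gives (\ref{eq4.34}). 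You also correctly isolate the only genuinely new issue for $s>1$, namely the Leibniz remainders: $R_{i}\to 0$ in $L^{p}$ because $W^{s,p}(M)\hookrightarrow W^{s-1,p}(M)$ compactly, and $R(\phi_{\delta},u)\to 0$ as $\delta\to 0$ because the terms carrying $a\ge 1$ derivatives on the cutoff are controlled via $W^{a,p}(M)\hookrightarrow L^{np/(n-ap)}(M)$, H\"{o}lder's inequality, and absolute continuity of the integral, while the curvature terms carrying no derivative on $\phi_{\delta}$ vanish by absolute continuity alone. No gaps.
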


With Proposition \ref{prop4.1} in place of Theorem \ref{thm2.1}, we can
argue the same way as in the proof of Theorem \ref{thm2.2} to get

\begin{theorem}
\label{thm4.1}Assume $n\in \mathbb{N}$, $s\in \mathbb{N}$ is even, $1<p<%
\frac{n}{s}$ and $m\in \mathbb{N}$. Then for any $\varepsilon >0$, and $u\in
W^{s,p}\left( \mathbb{S}^{n}\right) $ with%
\begin{equation}
\left\vert \int_{\mathbb{S}^{n}}f\left\vert u\right\vert ^{\frac{np}{n-sp}%
}d\mu \right\vert \leq b\left( f\right) \left\Vert u\right\Vert _{L^{p}}^{%
\frac{np}{n-sp}}  \label{eq4.35}
\end{equation}%
for all $f\in \overset{\circ }{\mathcal{P}}_{m}$ (defined in (\ref{eq1.11}%
)), here $b\left( f\right) $ is a positive number depending on $f$, we have%
\begin{equation}
\left\Vert u\right\Vert _{L^{\frac{np}{n-sp}}\left( \mathbb{S}^{n}\right)
}^{p}\leq \left( \frac{S_{n,s,p}^{p}}{\Theta \left( m,\frac{n-sp}{n}%
,n\right) }+\varepsilon \right) \left\Vert \Delta ^{\frac{s}{2}}u\right\Vert
_{L^{p}\left( \mathbb{S}^{n}\right) }^{p}+c\left( \varepsilon ,b,m\right)
\left\Vert u\right\Vert _{L^{p}\left( \mathbb{S}^{n}\right) }^{p}.
\label{eq4.36}
\end{equation}%
Here $S_{n,s,p}$ is given by (\ref{eq4.30}), $\Theta \left( m,\frac{n-sp}{n}%
,n\right) $ is defined in (\ref{eq1.13}).
\end{theorem}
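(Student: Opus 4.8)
The plan is to repeat verbatim the argument used to prove Theorem \ref{thm2.2}, with $\Delta^{s/2}u$ replacing $\nabla u$, the critical exponent $p_{s}^{\ast}:=\frac{np}{n-sp}$ replacing $p^{\ast}$, and Proposition \ref{prop4.1} replacing Theorem \ref{thm2.1}. Write $\theta=\frac{p}{p_{s}^{\ast}}=\frac{n-sp}{n}$ and $S=S_{n,s,p}$, and set
\[
\alpha=\frac{S^{p}}{\Theta\!\left(m,\tfrac{n-sp}{n},n\right)}+\varepsilon .
\]
Arguing by contradiction, if (\ref{eq4.36}) fails for this $\alpha$ then for each $k\in\mathbb{N}$ there is $u_{k}\in W^{s,p}(\mathbb{S}^{n})$ satisfying the moment bounds (\ref{eq4.35}) together with $\Vert u_{k}\Vert_{L^{p_{s}^{\ast}}}^{p}>\alpha\Vert\Delta^{s/2}u_{k}\Vert_{L^{p}}^{p}+k\Vert u_{k}\Vert_{L^{p}}^{p}$. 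Normalizing so that $\Vert u_{k}\Vert_{L^{p_{s}^{\ast}}}=1$ yields $\Vert\Delta^{s/2}u_{k}\Vert_{L^{p}}^{p}\le\frac1\alpha$ and $\Vert u_{k}\Vert_{L^{p}}^{p}\le\frac1k$, so $u_{k}\rightharpoonup0$ weakly in $W^{s,p}(\mathbb{S}^{n})$.

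After passing to a subsequence we may assume $\vert\Delta^{s/2}u_{k}\vert^{p}d\mu\rightarrow\sigma$ and $\vert u_{k}\vert^{p_{s}^{\ast}}d\mu\rightarrow\nu$ as measures. Proposition \ref{prop4.1} then gives countably many points $\xi_{j}\in\mathbb{S}^{n}$ with $\nu=\sum_{j}\nu_{j}\delta_{\xi_{j}}$ and $\nu_{j}^{\frac1p-\frac sn}\le S\,\sigma_{j}^{1/p}$, where $\nu_{j}=\nu(\{\xi_{j}\})$ and $\sigma_{j}=\sigma(\{\xi_{j}\})$; since $\frac1p-\frac sn=\frac{n-sp}{np}=\frac1{p_{s}^{\ast}}$, this reads $\nu_{j}^{\theta}\le S^{p}\sigma_{j}$. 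As $\Vert u_{k}\Vert_{L^{p}}\rightarrow0$ the weak limit is $0$, so no mass is absorbed there and $\nu(\mathbb{S}^{n})=1$, while $\sigma(\mathbb{S}^{n})\le\frac1\alpha$. Finally, (\ref{eq4.35}) combined with $\Vert u_{k}\Vert_{L^{p}}\rightarrow0$ forces $\int_{\mathbb{S}^{n}}f\,d\nu=0$ for every $f\in\overset{\circ}{\mathcal{P}}_{m}$ (defined in (\ref{eq1.11})), i.e. $\nu\in\mathcal{M}_{m}^{c}(\mathbb{S}^{n})$.

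Invoking the definition (\ref{eq1.13}) of $\Theta$,
\[
\Theta\!\left(m,\theta,n\right)\le\sum_{j}\nu_{j}^{\theta}\le S^{p}\sum_{j}\sigma_{j}\le S^{p}\sigma(\mathbb{S}^{n})\le\frac{S^{p}}{\alpha},
\]
hence $\alpha\le S^{p}/\Theta(m,\theta,n)$, contradicting the choice of $\alpha$ and proving (\ref{eq4.36}). Every step is routine once Proposition \ref{prop4.1} is in hand; the only points deserving a word of care are the exponent identity $\frac1p-\frac sn=\frac1{p_{s}^{\ast}}$, which is exactly what makes the output of Proposition \ref{prop4.1} match the exponent $\theta$ appearing in the definition of $\Theta$, and the passage to the limit in the constraint (\ref{eq4.35}), which is legitimate because $\vert u_{k}\vert^{p_{s}^{\ast}}d\mu\rightarrow\nu$ weakly against continuous functions while $\Vert u_{k}\Vert_{L^{p}}\rightarrow0$. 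I do not expect any genuinely new obstacle beyond those already met in the proof of Theorem \ref{thm2.2}; the higher-order nature of $\Delta^{s/2}$ enters only through Proposition \ref{prop4.1}.
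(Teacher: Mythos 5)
Your proposal is correct and is exactly the route the paper takes: the paper states Theorem 4.1 with the remark that ``with Proposition \ref{prop4.1} in place of Theorem \ref{thm2.1}, we can argue the same way as in the proof of Theorem \ref{thm2.2},'' and your write-up is precisely that substitution, including the key exponent check $\frac1p-\frac{s}{n}=\frac{n-sp}{np}$ so that Proposition \ref{prop4.1}'s estimate $\nu_j^{1/p-s/n}\le S_{n,s,p}\sigma_j^{1/p}$ becomes $\nu_j^{\theta}\le S_{n,s,p}^{p}\sigma_j$ with $\theta=\frac{n-sp}{n}$. The only place you lean on an unstated fact is in deducing boundedness of $u_k$ in $W^{s,p}(\mathbb{S}^n)$ from bounds on $\|\Delta^{s/2}u_k\|_{L^p}$ and $\|u_k\|_{L^p}$ (an elliptic estimate on the compact manifold), but this is implicit in the paper's treatment as well.
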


Combining Theorem \ref{thm4.1} with Proposition \ref{prop3.1} we get

\begin{corollary}
\label{cor4.1}Assume $n\in \mathbb{N}$, $s\in \mathbb{N}$ is even and $1<p<%
\frac{n}{s}$. If $u\in W^{s,p}\left( \mathbb{S}^{n}\right) $ satisfies%
\begin{equation}
\int_{\mathbb{S}^{n}}x_{i}\left\vert u\right\vert ^{\frac{np}{n-sp}}d\mu
\left( x\right) =0\text{ for }i=1,2,\cdots ,n+1,  \label{eq4.37}
\end{equation}%
here $\mu $ is the standard measure on $\mathbb{S}^{n}$, then for any $%
\varepsilon >0$, we have%
\begin{equation}
\left\Vert u\right\Vert _{L^{\frac{np}{n-sp}}\left( \mathbb{S}^{n}\right)
}^{p}\leq \left( 2^{-\frac{sp}{n}}S_{n,s,p}^{p}+\varepsilon \right)
\left\Vert \Delta ^{\frac{s}{2}}u\right\Vert _{L^{p}\left( \mathbb{S}%
^{n}\right) }^{p}+c_{\varepsilon }\left\Vert u\right\Vert _{L^{p}\left( 
\mathbb{S}^{n}\right) }^{p}.  \label{eq4.38}
\end{equation}
\end{corollary}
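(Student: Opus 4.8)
The plan is to obtain Corollary~\ref{cor4.1} as an immediate consequence of Theorem~\ref{thm4.1} specialized to $m=1$, combined with the evaluation $\Theta(1,\theta,n)=2^{1-\theta}$ from Proposition~\ref{prop3.1}. In other words, the whole content is already packaged in the general theorem plus the computation of the first extremal constant; the only thing that needs checking is that the pointwise moment hypothesis (\ref{eq4.37}) is exactly the $m=1$ instance of the hypothesis (\ref{eq4.35}).

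First I would identify the class $\overset{\circ}{\mathcal{P}}_1$. Every $f\in\overset{\circ}{\mathcal{P}}_1$ is an affine function $f=a_0+\sum_{i=1}^{n+1}a_ix_i$ on $\mathbb{R}^{n+1}$, and since $\int_{\mathbb{S}^n}x_i\,d\mu=0$ by oddness while $\int_{\mathbb{S}^n}1\,d\mu=\left\vert\mathbb{S}^n\right\vert\neq0$, the constraint $\int_{\mathbb{S}^n}f\,d\mu=0$ forces $a_0=0$. Hence $\overset{\circ}{\mathcal{P}}_1=\limfunc{span}\{x_1,\dots,x_{n+1}\}$. Consequently, if $u\in W^{s,p}(\mathbb{S}^n)$ satisfies (\ref{eq4.37}), then for any $f=\sum_i a_ix_i\in\overset{\circ}{\mathcal{P}}_1$ we get
\[
\int_{\mathbb{S}^n}f\left\vert u\right\vert^{\frac{np}{n-sp}}d\mu=\sum_{i=1}^{n+1}a_i\int_{\mathbb{S}^n}x_i\left\vert u\right\vert^{\frac{np}{n-sp}}d\mu=0,
\]
so (\ref{eq4.35}) holds trivially (say with $b(f)=1$).

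Then I would apply Theorem~\ref{thm4.1} with $m=1$ to conclude that for every $\varepsilon>0$,
\[
\left\Vert u\right\Vert_{L^{\frac{np}{n-sp}}(\mathbb{S}^n)}^p\le\left(\frac{S_{n,s,p}^p}{\Theta\left(1,\frac{n-sp}{n},n\right)}+\varepsilon\right)\left\Vert\Delta^{\frac{s}{2}}u\right\Vert_{L^p(\mathbb{S}^n)}^p+c_\varepsilon\left\Vert u\right\Vert_{L^p(\mathbb{S}^n)}^p.
\]
Finally, since $1<p<\frac{n}{s}$ gives $\theta:=\frac{n-sp}{n}\in(0,1)$, Proposition~\ref{prop3.1} yields $\Theta\left(1,\frac{n-sp}{n},n\right)=2^{1-\frac{n-sp}{n}}=2^{\frac{sp}{n}}$, so that $\frac{S_{n,s,p}^p}{\Theta(1,\frac{n-sp}{n},n)}=2^{-\frac{sp}{n}}S_{n,s,p}^p$; substituting this into the displayed inequality is precisely (\ref{eq4.38}). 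I do not expect a genuine obstacle here: all the analytic work is in Theorem~\ref{thm4.1} (via the concentration compactness principle Proposition~\ref{prop4.1}) and all the combinatorial/geometric work is in Proposition~\ref{prop3.1}; the only point that must be stated carefully is the identification $\overset{\circ}{\mathcal{P}}_1=\limfunc{span}\{x_1,\dots,x_{n+1}\}$, which is what makes the classical first order moment condition (\ref{eq4.37}) coincide with the $m=1$ case of the constraint used in Theorem~\ref{thm4.1}.
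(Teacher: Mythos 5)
Your proposal is correct and matches the paper's intent exactly: the paper states Corollary~\ref{cor4.1} as the combination of Theorem~\ref{thm4.1} (with $m=1$) and Proposition~\ref{prop3.1}, and your identification $\overset{\circ}{\mathcal{P}}_1=\limfunc{span}\{x_1,\dots,x_{n+1}\}$ together with the computation $\Theta\left(1,\tfrac{n-sp}{n},n\right)=2^{sp/n}$ is precisely the routine verification needed to make that combination explicit.
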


Combining Theorem \ref{thm4.1} with Proposition \ref{prop3.2} we get

\begin{corollary}
\label{cor4.2}Assume $n\in \mathbb{N}$, $s\in \mathbb{N}$ is even and $1<p<%
\frac{n}{s}$. If $u\in W^{s,p}\left( \mathbb{S}^{n}\right) $ satisfies%
\begin{equation}
\int_{\mathbb{S}^{n}}f\left\vert u\right\vert ^{\frac{np}{n-sp}}d\mu =0
\label{eq4.39}
\end{equation}%
for all $f\in \overset{\circ }{\mathcal{P}}_{2}$ (defined in (\ref{eq1.11}%
)), here $\mu $ is the standard measure on $\mathbb{S}^{n}$, then for any $%
\varepsilon >0$, we have%
\begin{equation}
\left\Vert u\right\Vert _{L^{\frac{np}{n-sp}}\left( \mathbb{S}^{n}\right)
}^{p}\leq \left( \left( n+2\right) ^{-\frac{sp}{n}}S_{n,s,p}^{p}+\varepsilon
\right) \left\Vert \Delta ^{\frac{s}{2}}u\right\Vert _{L^{p}\left( \mathbb{S}%
^{n}\right) }^{p}+c_{\varepsilon }\left\Vert u\right\Vert _{L^{p}\left( 
\mathbb{S}^{n}\right) }^{p}.  \label{eq4.40}
\end{equation}
\end{corollary}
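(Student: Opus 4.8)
The plan is to obtain Corollary \ref{cor4.2} as the specialization of Theorem \ref{thm4.1} to $m=2$, combined with the explicit value of $\Theta\left(2,\theta,n\right)$ established in Proposition \ref{prop3.2}.

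First I would observe that hypothesis (\ref{eq4.39}) is a strengthening of (\ref{eq4.35}): if $\int_{\mathbb{S}^{n}}f\left\vert u\right\vert^{\frac{np}{n-sp}}d\mu=0$ for every $f\in\overset{\circ}{\mathcal{P}}_{2}$, then trivially $\left\vert\int_{\mathbb{S}^{n}}f\left\vert u\right\vert^{\frac{np}{n-sp}}d\mu\right\vert=0\leq b\left(f\right)\left\Vert u\right\Vert_{L^{p}}^{\frac{np}{n-sp}}$ with the constant choice $b\equiv 1$, so (\ref{eq4.35}) holds. Theorem \ref{thm4.1} with $m=2$ then gives, for every $\varepsilon>0$,
\begin{equation*}
\left\Vert u\right\Vert_{L^{\frac{np}{n-sp}}\left(\mathbb{S}^{n}\right)}^{p}\leq\left(\frac{S_{n,s,p}^{p}}{\Theta\left(2,\frac{n-sp}{n},n\right)}+\varepsilon\right)\left\Vert\Delta^{\frac{s}{2}}u\right\Vert_{L^{p}\left(\mathbb{S}^{n}\right)}^{p}+c_{\varepsilon}\left\Vert u\right\Vert_{L^{p}\left(\mathbb{S}^{n}\right)}^{p},
\end{equation*}
where $c_{\varepsilon}$ depends only on $\varepsilon$ (and the fixed data $n,s,p$).

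It remains to evaluate the constant. Since $1<p<\frac{n}{s}$ we have $0<sp<n$, hence $\theta:=\frac{n-sp}{n}\in\left(0,1\right)$ and Proposition \ref{prop3.2} applies, giving $\Theta\left(2,\frac{n-sp}{n},n\right)=\left(n+2\right)^{1-\frac{n-sp}{n}}=\left(n+2\right)^{\frac{sp}{n}}$. Substituting this into the displayed inequality turns the coefficient of $\left\Vert\Delta^{\frac{s}{2}}u\right\Vert_{L^{p}}^{p}$ into $\left(n+2\right)^{-\frac{sp}{n}}S_{n,s,p}^{p}+\varepsilon$, which is precisely (\ref{eq4.40}).

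Since all the necessary machinery — the concentration-compactness principle of Proposition \ref{prop4.1}, the deduction scheme of Theorem \ref{thm4.1}, and the extremal computation of Proposition \ref{prop3.2} — is already in place, I do not expect a genuine obstacle here; the only point requiring (trivial) verification is that the moment exponent $\frac{n-sp}{n}$ lies in $\left(0,1\right)$, so that Proposition \ref{prop3.2} is legitimately applicable, and this is immediate from $1<p<\frac{n}{s}$.
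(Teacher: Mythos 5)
Your proof is correct and matches the paper's own derivation exactly: the paper explicitly introduces Corollary \ref{cor4.2} as the combination of Theorem \ref{thm4.1} (taken with $m=2$) and Proposition \ref{prop3.2}, just as you do. The observation that the exact vanishing (\ref{eq4.39}) trivially implies (\ref{eq4.35}), and the substitution $\Theta\left(2,\tfrac{n-sp}{n},n\right)=\left(n+2\right)^{\frac{sp}{n}}$, are both straightforward and correctly carried out.
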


\subsection{Fourth order Paneitz operator and the $Q$ curvature\label{sec4.2}%
}

Let $\left( M,g\right) $ be a smooth Riemannian manifold with dimension $%
n\geq 3$, the fourth order $Q$ curvature is given by (see \cite{B})%
\begin{equation}
Q=-\frac{1}{2\left( n-1\right) }\Delta R-\frac{2}{\left( n-2\right) ^{2}}%
\left\vert Rc\right\vert ^{2}+\frac{n^{3}-4n^{2}+16n-16}{8\left( n-1\right)
^{2}\left( n-2\right) ^{2}}R^{2}.  \label{eq4.41}
\end{equation}%
The fourth order Paneitz operator is given by%
\begin{eqnarray}
&&Pu  \label{eq4.42} \\
&=&\Delta ^{2}u+\frac{4}{n-2}\func{div}\left( Rc\left( \nabla u,e_{i}\right)
e_{i}\right) -\frac{n^{2}-4n+8}{2\left( n-1\right) \left( n-2\right) }\func{%
div}\left( R\nabla u\right) +\frac{n-4}{2}Qu.  \notag
\end{eqnarray}%
Here $e_{1},\cdots ,e_{n}$ is a local orthonormal frame with respect to $g$.
The Paneitz operator and $Q$ curvature behave the same way as the conformal
Laplacian operator and scalar curvature. Recent work in \cite{GuHL, GuM,
HaY1, HaY2, HaY3, HaY4} helps us a lot in the understanding of this special
fourth order operator in dimensions other than $4$. We refer the reader to 
\cite{HaY4} for a detailed discussion. For the standard $\mathbb{S}^{n}$,%
\begin{equation}
Pu=\Delta ^{2}u-\frac{n^{2}-2n-4}{2}\Delta u+\frac{n\left( n+2\right) \left(
n-2\right) \left( n-4\right) }{16}u.  \label{eq4.43}
\end{equation}

On the other hand, it follows from \cite{CnLO, Li, Lin} that for $n\geq 5$,%
\begin{eqnarray}
S_{n,2,2} &=&\sup_{\varphi \in C_{c}^{\infty }\left( \mathbb{R}^{n}\right)
\backslash \left\{ 0\right\} }\frac{\left\Vert \varphi \right\Vert _{L^{%
\frac{2n}{n-4}}}}{\left\Vert \Delta ^{2}\varphi \right\Vert _{L^{2}}}
\label{eq4.44} \\
&=&\sup \left\{ \frac{\left\Vert u\right\Vert _{L^{\frac{2n}{n-4}}}}{%
\left\Vert \Delta ^{2}u\right\Vert _{L^{2}}}:u\in L^{\frac{2n}{n-4}}\left( 
\mathbb{R}^{n}\right) \backslash \left\{ 0\right\} \text{ s.t. }\Delta
^{2}u\in L^{2}\left( \mathbb{R}^{n}\right) \right\}  \notag \\
&=&\frac{4}{\sqrt{n\left( n+2\right) \left( n-2\right) \left( n-4\right) }}%
\left\vert S^{n}\right\vert ^{-\frac{2}{n}},  \notag
\end{eqnarray}%
and $S_{n,2,2}$ is achieved if and only if $u\left( x\right) =\pm \left(
a+b\left\vert x-x_{0}\right\vert ^{2}\right) ^{-\frac{n-4}{2}}$ for some $%
x_{0}\in \mathbb{R}^{n}$ and positive constants $a$ and $b$. It follows from
Theorem \ref{thm4.1} that

\begin{corollary}
\label{cor4.3}Assume $n\geq 5$. If $u\in H^{2}\left( \mathbb{S}^{n}\right)
=W^{2,2}\left( \mathbb{S}^{n}\right) $ satisfies%
\begin{equation}
\int_{\mathbb{S}^{n}}x_{i}\left\vert u\right\vert ^{\frac{2n}{n-4}}d\mu
\left( x\right) =0\text{ for }i=1,2,\cdots ,n+1,  \label{eq4.45}
\end{equation}%
here $\mu $ is the standard measure on $\mathbb{S}^{n}$, then for any $%
\varepsilon >0$, we have%
\begin{equation}
\left\Vert u\right\Vert _{L^{\frac{2n}{n-4}}\left( \mathbb{S}^{n}\right)
}^{2}\leq \left( 2^{-\frac{4}{n}}S_{n,2,2}^{2}+\varepsilon \right)
\left\Vert \Delta u\right\Vert _{L^{2}\left( \mathbb{S}^{n}\right)
}^{2}+c_{\varepsilon }\left\Vert u\right\Vert _{L^{2}\left( \mathbb{S}%
^{n}\right) }^{2}.  \label{eq4.46}
\end{equation}%
Here $S_{n,2,2}$ is given in (\ref{eq4.44}).
\end{corollary}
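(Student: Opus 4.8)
The plan is to obtain Corollary~\ref{cor4.3} as the special case $s=p=2$ of the material already developed, so that essentially no new work is required. First I would verify that the hypotheses of Corollary~\ref{cor4.1} are met with $s=2$, $p=2$: we need $s$ even, which holds, and $1<p<n/s$, i.e.\ $1<2<n/2$, which is exactly the standing assumption $n\geq 5$. Under these choices the critical exponent $\tfrac{np}{n-sp}$ becomes $\tfrac{2n}{n-4}$, the operator $\Delta^{s/2}$ becomes the Laplace--Beltrami operator $\Delta$ on $\mathbb{S}^n$, the quantity $\|\Delta^{s/2}u\|_{L^p}^p$ becomes $\|\Delta u\|_{L^2}^2$, $\|u\|_{L^p}^p$ becomes $\|u\|_{L^2}^2$, and the exponent $\tfrac{sp}{n}$ equals $\tfrac{4}{n}$. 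Note that the fourth order Paneitz operator does not actually enter here: the inequality (\ref{eq4.46}) is stated with the bi-Laplacian energy $\|\Delta u\|_{L^2}^2$, which is precisely what Theorem~\ref{thm4.1} and Corollary~\ref{cor4.1} control.

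Next I would record the (essentially trivial) identification of the constraint. The hypothesis (\ref{eq4.45}) says $\int_{\mathbb{S}^n}x_i|u|^{2n/(n-4)}\,d\mu=0$ for $i=1,\dots,n+1$, and this is equivalent to $\int_{\mathbb{S}^n}f|u|^{2n/(n-4)}\,d\mu=0$ for all $f\in\overset{\circ}{\mathcal{P}}_1$ (see (\ref{eq1.11})): indeed a polynomial on $\mathbb{R}^{n+1}$ of degree $\leq 1$ has the form $a_0+\sum_{i=1}^{n+1}a_ix_i$, and since $\int_{\mathbb{S}^n}x_i\,d\mu=0$ the condition $\int_{\mathbb{S}^n}f\,d\mu=0$ forces $a_0=0$, so $\overset{\circ}{\mathcal{P}}_1$ restricted to $\mathbb{S}^n$ is spanned by $x_1,\dots,x_{n+1}$. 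Hence $u$ satisfies the hypothesis of Corollary~\ref{cor4.1} with $m=1$ (and also the weaker hypothesis (\ref{eq4.35}) of Theorem~\ref{thm4.1}, trivially, with $b(f)\equiv 1$).

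Applying Corollary~\ref{cor4.1} with $m=1$, $s=p=2$ --- which in turn rests on the concentration--compactness Proposition~\ref{prop4.1}, the argument of Theorem~\ref{thm4.1}, and the evaluation $\Theta(1,\theta,n)=2^{1-\theta}$ from Proposition~\ref{prop3.1}, used here with $\theta=\tfrac{n-4}{n}\in(0,1)$ so that $\Theta(1,\tfrac{n-4}{n},n)=2^{4/n}$ --- we arrive at
\[
\|u\|_{L^{2n/(n-4)}(\mathbb{S}^n)}^2\leq\left(2^{-4/n}S_{n,2,2}^2+\varepsilon\right)\|\Delta u\|_{L^2(\mathbb{S}^n)}^2+c_\varepsilon\|u\|_{L^2(\mathbb{S}^n)}^2
\]
for every $\varepsilon>0$. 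It only remains to insert the closed-form value of $S_{n,2,2}$ from (\ref{eq4.44}), which is available for $n\geq 5$ by \cite{CnLO, Li, Lin}; this yields exactly (\ref{eq4.46}). Since the whole argument is bookkeeping, there is no genuine obstacle; the only points demanding a moment's care are the identification of the first-order moment constraint with the $\overset{\circ}{\mathcal{P}}_1$ constraint and the correct matching of all exponents ($\tfrac{np}{n-sp}$, $2^{-sp/n}$, and $\Delta^{s/2}=\Delta$).
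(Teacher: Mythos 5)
Your proposal is correct and coincides with the paper's own derivation: the paper also obtains Corollary~\ref{cor4.3} by specializing Theorem~\ref{thm4.1} (equivalently, Corollary~\ref{cor4.1}) to $s=p=2$, $m=1$, using $\Theta(1,\tfrac{n-4}{n},n)=2^{4/n}$ from Proposition~\ref{prop3.1} together with the identification of $\overset{\circ}{\mathcal{P}}_1\big|_{\mathbb{S}^n}$ with the span of the coordinate functions and the explicit value of $S_{n,2,2}$ from (\ref{eq4.44}). Your bookkeeping of the exponents and the observation that $1<2<n/2$ is exactly $n\geq 5$ are both accurate, so nothing is missing.
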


Corollary \ref{cor4.3} can be used in the prescribing $Q$ curvature problem
on $\mathbb{S}^{n}$.

\subsection{$W^{s,\frac{n}{s}}\left( \mathbb{S}^{n}\right) $ for odd $s$%
\label{sec4.3}}

Let $s\in \mathbb{N}$ be odd, $1<p<\frac{n}{s}$, then for any $\varphi \in
C_{c}^{\infty }\left( \mathbb{R}^{n}\right) $,%
\begin{equation}
\left\Vert \varphi \right\Vert _{L^{\frac{np}{n-sp}}}\leq c\left(
n,s,p\right) \left\Vert \nabla \Delta ^{\frac{s-1}{2}}\varphi \right\Vert
_{L^{p}}.  \label{eq4.47}
\end{equation}%
We write%
\begin{eqnarray}
S_{n,s,p} &=&\sup_{\varphi \in C_{c}^{\infty }\left( \mathbb{R}^{n}\right)
\backslash \left\{ 0\right\} }\frac{\left\Vert \varphi \right\Vert _{L^{%
\frac{np}{n-sp}}}}{\left\Vert \nabla \Delta ^{\frac{s-1}{2}}\varphi
\right\Vert _{L^{p}}}  \label{eq4.48} \\
&=&\sup \left\{ \frac{\left\Vert u\right\Vert _{L^{\frac{np}{n-sp}}}}{%
\left\Vert \nabla \Delta ^{\frac{s-1}{2}}u\right\Vert _{L^{p}}}:u\in L^{%
\frac{np}{n-sp}}\left( \mathbb{R}^{n}\right) \backslash \left\{ 0\right\} 
\text{ s.t. }\nabla \Delta ^{\frac{s-1}{2}}u\in L^{p}\left( \mathbb{R}%
^{n}\right) \right\} .  \notag
\end{eqnarray}%
We can proceed in the same way as in Section \ref{sec4.2} to get the
following

\begin{theorem}
\label{thm4.2}Assume $n\in \mathbb{N}$, $s\in \mathbb{N}$ is odd, $1<p<\frac{%
n}{s}$ and $m\in \mathbb{N}$. Then for any $\varepsilon >0$, and $u\in
W^{s,p}\left( \mathbb{S}^{n}\right) $ with%
\begin{equation}
\left\vert \int_{\mathbb{S}^{n}}f\left\vert u\right\vert ^{\frac{np}{n-sp}%
}d\mu \right\vert \leq b\left( f\right) \left\Vert u\right\Vert _{L^{p}}^{%
\frac{np}{n-sp}}  \label{eq4.49}
\end{equation}%
for all $f\in \overset{\circ }{\mathcal{P}}_{m}$ (defined in (\ref{eq1.11}%
)), here $b\left( f\right) $ is a positive number depending on $f$, we have%
\begin{equation}
\left\Vert u\right\Vert _{L^{\frac{np}{n-sp}}\left( \mathbb{S}^{n}\right)
}^{p}\leq \left( \frac{S_{n,s,p}^{p}}{\Theta \left( m,\frac{n-sp}{n}%
,n\right) }+\varepsilon \right) \left\Vert \nabla \Delta ^{\frac{s-1}{2}%
}u\right\Vert _{L^{p}\left( \mathbb{S}^{n}\right) }^{p}+c\left( \varepsilon
,b,m\right) \left\Vert u\right\Vert _{L^{p}\left( \mathbb{S}^{n}\right)
}^{p}.  \label{eq4.50}
\end{equation}%
Here $S_{n,s,p}$ is given by (\ref{eq4.48}), $\Theta \left( m,\frac{n-sp}{n}%
,n\right) $ is defined in (\ref{eq1.13}).
\end{theorem}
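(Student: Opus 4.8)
The plan is to run the proof of Theorem~\ref{thm2.2} (equivalently Theorem~\ref{thm4.1}) essentially verbatim, with Theorem~\ref{thm2.1} replaced by the concentration compactness principle adapted to the subcritical Sobolev space built on the operator $\nabla \Delta ^{\frac{s-1}{2}}$. Accordingly, the first step is to record the analogue of Proposition~\ref{prop4.1}: if $\left( M^{n},g\right) $ is a smooth compact Riemannian manifold, $1<p<\frac{n}{s}$, and $u_{i}\in W^{s,p}\left( M\right) $ with $u_{i}\rightharpoonup u$ weakly in $W^{s,p}\left( M\right) $, $\left\vert \nabla \Delta ^{\frac{s-1}{2}}u_{i}\right\vert ^{p}d\mu \rightarrow \left\vert \nabla \Delta ^{\frac{s-1}{2}}u\right\vert ^{p}d\mu +\sigma $ and $\left\vert u_{i}\right\vert ^{\frac{np}{n-sp}}d\mu \rightarrow \left\vert u\right\vert ^{\frac{np}{n-sp}}d\mu +\nu $ as measures, then there are countably many points $x_{j}\in M$ with $\nu =\sum_{j}\nu _{j}\delta _{x_{j}}$ and $\nu _{j}^{\frac{1}{p}-\frac{s}{n}}\leq S_{n,s,p}\,\sigma _{j}^{\frac{1}{p}}$, where $\nu _{j}=\nu \left( \left\{ x_{j}\right\} \right) $ and $\sigma _{j}=\sigma \left( \left\{ x_{j}\right\} \right) $. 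Here $S_{n,s,p}$ is the sharp constant in (\ref{eq4.48}), whose value and attainment are known (cf. \cite{DHL, He2, Ln1, Ln2} and Section~\ref{sec4.2}); note $\frac{1}{p}-\frac{s}{n}=\frac{n-sp}{np}$.

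Granting this, I argue by contradiction as in the proof of Theorem~\ref{thm2.2}. Write $p^{\ast }=\frac{np}{n-sp}$, $\theta =\frac{p}{p^{\ast }}=\frac{n-sp}{n}$, and $\alpha =\frac{S_{n,s,p}^{p}}{\Theta \left( m,\theta ,n\right) }+\varepsilon $. If (\ref{eq4.50}) fails, then for every $k\in \mathbb{N}$ there is $u_{k}\in W^{s,p}\left( \mathbb{S}^{n}\right) $ satisfying the moment bounds (\ref{eq4.49}) with $\left\Vert u_{k}\right\Vert _{L^{p^{\ast }}}^{p}>\alpha \left\Vert \nabla \Delta ^{\frac{s-1}{2}}u_{k}\right\Vert _{L^{p}}^{p}+k\left\Vert u_{k}\right\Vert _{L^{p}}^{p}$; after scaling so that $\left\Vert u_{k}\right\Vert _{L^{p^{\ast }}}=1$ we get $\left\Vert \nabla \Delta ^{\frac{s-1}{2}}u_{k}\right\Vert _{L^{p}}^{p}\leq \frac{1}{\alpha }$ and $\left\Vert u_{k}\right\Vert _{L^{p}}^{p}\leq \frac{1}{k}$, hence $u_{k}\rightharpoonup 0$ weakly in $W^{s,p}\left( \mathbb{S}^{n}\right) $. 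Passing to a subsequence, $\left\vert \nabla \Delta ^{\frac{s-1}{2}}u_{k}\right\vert ^{p}d\mu \rightarrow \sigma $ and $\left\vert u_{k}\right\vert ^{p^{\ast }}d\mu \rightarrow \nu $ as measures, and by the first step $\nu =\sum_{j}\nu _{j}\delta _{\xi _{j}}$ with $\nu _{j}^{\theta }\leq S_{n,s,p}^{p}\sigma _{j}$; moreover $\nu \left( \mathbb{S}^{n}\right) =1$ and $\sigma \left( \mathbb{S}^{n}\right) \leq \frac{1}{\alpha }$.

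From (\ref{eq4.49}) together with $\left\Vert u_{k}\right\Vert _{L^{p}}\rightarrow 0$ one obtains $\int_{\mathbb{S}^{n}}f\,d\nu =0$ for all $f\in \overset{\circ }{\mathcal{P}}_{m}$, so $\nu \in \mathcal{M}_{m}^{c}\left( \mathbb{S}^{n}\right) $. Summing the atomic estimate over $j$ gives $\sum_{j}\nu _{j}^{\theta }\leq S_{n,s,p}^{p}\,\sigma \left( \mathbb{S}^{n}\right) \leq \frac{S_{n,s,p}^{p}}{\alpha }$, and the definition (\ref{eq1.13}) of $\Theta \left( m,\theta ,n\right) $ then forces $\Theta \left( m,\theta ,n\right) \leq \frac{S_{n,s,p}^{p}}{\alpha }$, i.e. $\alpha \leq \frac{S_{n,s,p}^{p}}{\Theta \left( m,\theta ,n\right) }$. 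This contradicts the choice of $\alpha $, which proves (\ref{eq4.50}).

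The only genuinely new ingredient — and where I expect the main difficulty to lie — is the concentration compactness principle of the first step. For odd $s$ the operator $\nabla \Delta ^{\frac{s-1}{2}}$ is vector-valued, so one must verify that it still enjoys the cutoff and product estimates (and the attendant Brezis--Lieb type splitting) that allow one to localize the concentration and to extract the reverse H\"older inequality $\nu _{j}^{\frac{1}{p}-\frac{s}{n}}\leq S_{n,s,p}\sigma _{j}^{\frac{1}{p}}$ at each atom. Once this is established along the lines of \cite{DHL, He2, Ln1, Ln2} and Section~\ref{sec4.2}, everything else is the formal contradiction argument above, identical to the proofs of Theorems~\ref{thm2.2} and \ref{thm4.1}.
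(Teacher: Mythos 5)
Your proposal is correct and follows exactly the route the paper intends: invoke the concentration compactness principle for the operator $\nabla\Delta^{\frac{s-1}{2}}$ (the odd-$s$ analogue of Proposition~\ref{prop4.1}, available from \cite{DHL, He2, Ln1, Ln2}), then repeat the contradiction argument from the proof of Theorem~\ref{thm2.2} verbatim with $\theta=\frac{n-sp}{n}$. The paper itself states that one should ``proceed in the same way'' and leaves the details to the reader, so your write-up supplies precisely what was expected.
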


\begin{corollary}
\label{cor4.4}Assume $n\in \mathbb{N}$, $s\in \mathbb{N}$ is odd and $1<p<%
\frac{n}{s}$. If $u\in W^{s,p}\left( \mathbb{S}^{n}\right) $ satisfies%
\begin{equation}
\int_{\mathbb{S}^{n}}x_{i}\left\vert u\right\vert ^{\frac{np}{n-sp}}d\mu
\left( x\right) =0\text{ for }i=1,2,\cdots ,n+1,  \label{eq4.51}
\end{equation}%
here $\mu $ is the standard measure on $\mathbb{S}^{n}$, then for any $%
\varepsilon >0$, we have%
\begin{equation}
\left\Vert u\right\Vert _{L^{\frac{np}{n-sp}}\left( \mathbb{S}^{n}\right)
}^{p}\leq \left( 2^{-\frac{sp}{n}}S_{n,s,p}^{p}+\varepsilon \right)
\left\Vert \nabla \Delta ^{\frac{s-1}{2}}u\right\Vert _{L^{p}\left( \mathbb{S%
}^{n}\right) }^{p}+c_{\varepsilon }\left\Vert u\right\Vert _{L^{p}\left( 
\mathbb{S}^{n}\right) }^{p}.  \label{eq4.52}
\end{equation}
\end{corollary}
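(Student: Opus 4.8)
The plan is to obtain Corollary \ref{cor4.4} as the special case $m=1$ of Theorem \ref{thm4.2}, in exactly the way Corollary \ref{cor4.1} was deduced from Theorem \ref{thm4.1}. First I would check that the moment conditions (\ref{eq4.51}) are equivalent to $\int_{\mathbb{S}^{n}}f\left\vert u\right\vert ^{\frac{np}{n-sp}}d\mu =0$ for every $f\in \overset{\circ }{\mathcal{P}}_{1}$. This holds because an element of $\mathcal{P}_{1}$ is an affine function $a_{0}+\sum_{i=1}^{n+1}a_{i}x_{i}$ on $\mathbb{R}^{n+1}$, and since $\int_{\mathbb{S}^{n}}x_{i}\,d\mu =0$ for each $i$ while $\int_{\mathbb{S}^{n}}1\,d\mu =\left\vert \mathbb{S}^{n}\right\vert \neq 0$, the requirement $\int_{\mathbb{S}^{n}}f\,d\mu =0$ forces $a_{0}=0$; hence $\left. \overset{\circ }{\mathcal{P}}_{1}\right\vert _{\mathbb{S}^{n}}=\limfunc{span}\left\{ x_{1},\cdots ,x_{n+1}\right\} $. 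In particular, for $u$ satisfying (\ref{eq4.51}) we have $\left\vert \int_{\mathbb{S}^{n}}f\left\vert u\right\vert ^{\frac{np}{n-sp}}d\mu \right\vert =0\leq b\left( f\right) \left\Vert u\right\Vert _{L^{p}}^{\frac{np}{n-sp}}$ for any positive number $b\left( f\right) $, so hypothesis (\ref{eq4.49}) of Theorem \ref{thm4.2} is satisfied with $m=1$ (taking, say, $b\equiv 1$).

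Next I would invoke Proposition \ref{prop3.1}, which gives $\Theta \left( 1,\theta ,n\right) =2^{1-\theta }$ for every $\theta \in \left( 0,1\right) $ and $n\in \mathbb{N}$. Applying this with $\theta =\frac{n-sp}{n}$, so that $1-\theta =\frac{sp}{n}$, yields $\Theta \left( 1,\frac{n-sp}{n},n\right) =2^{sp/n}$. Substituting this value into the conclusion (\ref{eq4.50}) of Theorem \ref{thm4.2} replaces the factor $\frac{S_{n,s,p}^{p}}{\Theta \left( 1,\frac{n-sp}{n},n\right) }$ by $2^{-\frac{sp}{n}}S_{n,s,p}^{p}$, which is precisely the coefficient appearing in (\ref{eq4.52}); the remainder $c\left( \varepsilon ,b,1\right) \left\Vert u\right\Vert _{L^{p}\left( \mathbb{S}^{n}\right) }^{p}$ furnishes the term $c_{\varepsilon }\left\Vert u\right\Vert _{L^{p}\left( \mathbb{S}^{n}\right) }^{p}$ of (\ref{eq4.52}). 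This completes the deduction.

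I do not expect any genuine obstacle here: the analytic content is already encapsulated in Theorem \ref{thm4.2} (whose proof mirrors that of Theorem \ref{thm2.2}, with the $W^{s,p}$ concentration compactness principle for odd $s$ playing the role of Theorem \ref{thm2.1}) together with the extremal computation of Proposition \ref{prop3.1}. The only step deserving a line of care is the identification of $\overset{\circ }{\mathcal{P}}_{1}$ restricted to $\mathbb{S}^{n}$ with the span of the coordinate functions $x_{1},\cdots ,x_{n+1}$, which is immediate from the vanishing of the first moments of the uniform measure on the sphere.
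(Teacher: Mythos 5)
Your deduction is correct and is exactly the route the paper intends: apply Theorem~\ref{thm4.2} with $m=1$, use Proposition~\ref{prop3.1} to replace $\Theta\left(1,\tfrac{n-sp}{n},n\right)$ by $2^{sp/n}$, and observe that (\ref{eq4.51}) amounts to vanishing of $\int_{\mathbb{S}^n}f|u|^{np/(n-sp)}d\mu$ for all $f\in\overset{\circ}{\mathcal{P}}_1$. The paper leaves this verification to the reader, and your write-up (including the identification $\left.\overset{\circ}{\mathcal{P}}_1\right\vert_{\mathbb{S}^n}=\operatorname{span}\{x_1,\dots,x_{n+1}\}$) fills in precisely those details.
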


\begin{corollary}
\label{cor4.5}Assume $n\in \mathbb{N}$, $s\in \mathbb{N}$ is odd and $1<p<%
\frac{n}{s}$. If $u\in W^{s,p}\left( \mathbb{S}^{n}\right) $ satisfies%
\begin{equation}
\int_{\mathbb{S}^{n}}f\left\vert u\right\vert ^{\frac{np}{n-sp}}d\mu =0
\label{eq4.53}
\end{equation}%
for all $f\in \overset{\circ }{\mathcal{P}}_{2}$ (defined in (\ref{eq1.11}%
)), then for any $\varepsilon >0$, we have%
\begin{equation}
\left\Vert u\right\Vert _{L^{\frac{np}{n-sp}}\left( \mathbb{S}^{n}\right)
}^{p}\leq \left( \left( n+2\right) ^{-\frac{sp}{n}}S_{n,s,p}^{p}+\varepsilon
\right) \left\Vert \nabla \Delta ^{\frac{s-1}{2}}u\right\Vert _{L^{p}\left( 
\mathbb{S}^{n}\right) }^{p}+c_{\varepsilon }\left\Vert u\right\Vert
_{L^{p}\left( \mathbb{S}^{n}\right) }^{p}.  \label{eq4.54}
\end{equation}
\end{corollary}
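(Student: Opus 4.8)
The plan is to obtain Corollary~\ref{cor4.5} as an immediate specialization of Theorem~\ref{thm4.2} with $m=2$, combined with the evaluation of $\Theta\left(2,\theta,n\right)$ provided by Proposition~\ref{prop3.2}. First I would observe that the hypothesis $\int_{\mathbb{S}^{n}}f\left\vert u\right\vert ^{\frac{np}{n-sp}}d\mu=0$ for every $f\in\overset{\circ}{\mathcal{P}}_{2}$ is a (strictly stronger) instance of the integral constraint (\ref{eq4.49}): the left-hand side of (\ref{eq4.49}) then vanishes, so that inequality holds for all $f\in\overset{\circ}{\mathcal{P}}_{2}$ with, say, $b\left(f\right)=1$. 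Since $s$ is odd, $\frac{s-1}{2}$ is a nonnegative integer and Theorem~\ref{thm4.2} applies directly, yielding for every $\varepsilon>0$
\begin{equation*}
\left\Vert u\right\Vert _{L^{\frac{np}{n-sp}}\left(\mathbb{S}^{n}\right)}^{p}\leq\left(\frac{S_{n,s,p}^{p}}{\Theta\left(2,\frac{n-sp}{n},n\right)}+\varepsilon\right)\left\Vert \nabla\Delta^{\frac{s-1}{2}}u\right\Vert _{L^{p}\left(\mathbb{S}^{n}\right)}^{p}+c\left(\varepsilon,b,2\right)\left\Vert u\right\Vert _{L^{p}\left(\mathbb{S}^{n}\right)}^{p}.
\end{equation*}

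Next I would check that the exponent $\theta=\frac{n-sp}{n}$ lies in $\left(0,1\right)$, which is exactly the condition $0<sp<n$ guaranteed by $1<p<\frac{n}{s}$. Hence Proposition~\ref{prop3.2} applies with this $\theta$ and gives
\begin{equation*}
\Theta\left(2,\tfrac{n-sp}{n},n\right)=\left(n+2\right)^{1-\frac{n-sp}{n}}=\left(n+2\right)^{\frac{sp}{n}}.
\end{equation*}
Substituting into the displayed inequality converts the coefficient of $\left\Vert \nabla\Delta^{\frac{s-1}{2}}u\right\Vert _{L^{p}}^{p}$ into $\left(n+2\right)^{-\frac{sp}{n}}S_{n,s,p}^{p}+\varepsilon$, which is precisely (\ref{eq4.54}); absorbing the fixed data $b\equiv 1$ and $m=2$ into the notation $c_{\varepsilon}$ for the lower-order constant finishes the argument.

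There is essentially no obstacle here: the statement is a bookkeeping combination of two results already established in the excerpt. The only points requiring a moment's care are (i) verifying that the pointwise-vanishing moment condition is subsumed by the weaker bound (\ref{eq4.49}) used in Theorem~\ref{thm4.2}, and (ii) confirming $\frac{n-sp}{n}\in\left(0,1\right)$ so that Proposition~\ref{prop3.2} may be invoked — both routine. (For completeness I would note that running the same scheme with Proposition~\ref{prop3.1} in place of Proposition~\ref{prop3.2} reproduces Corollary~\ref{cor4.4}, so Corollary~\ref{cor4.5} is simply the explicit $m=2$ case of Theorem~\ref{thm4.2}.)
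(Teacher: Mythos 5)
Your derivation is correct and is precisely the route the paper intends: Corollary~\ref{cor4.5} is stated as a direct consequence of Theorem~\ref{thm4.2} with $m=2$ together with the computation $\Theta\left(2,\tfrac{n-sp}{n},n\right)=\left(n+2\right)^{\frac{sp}{n}}$ from Proposition~\ref{prop3.2}, and the paper explicitly leaves these details to the reader. Your two sanity checks (that the vanishing moments imply (\ref{eq4.49}) with $b\equiv 1$, and that $\theta=\tfrac{n-sp}{n}\in\left(0,1\right)$) are exactly the points worth verifying.
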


We leave the detail to interested readers.

\textit{Note added in proof. }Recently it is shown in \cite{P} that%
\begin{equation}
\Theta \left( 3,\theta ,n\right) =\left( 2n+2\right) ^{1-\theta }
\label{eq4.55}
\end{equation}%
and $\Theta \left( 3,\theta ,n\right) $ is achieved at $\nu \in \mathcal{M}%
_{3}^{c}\left( \mathbb{S}^{n}\right) $ if and only if%
\begin{equation}
\nu =\frac{1}{2n+2}\sum_{i=1}^{n+1}\left( \delta _{\xi _{i}}+\delta _{-\xi
_{i}}\right)  \label{eq4.56}
\end{equation}%
for $\xi _{1},\cdots ,\xi _{n+1}$ being an orthonormal basis of $\mathbb{R}%
^{n+1}$.

\end{document}